\newtheorem{thm}{Theorem}[section]
\newtheorem{conj}[thm]{Conjecture}
\newtheorem{cor}[thm]{Corollary}
\newtheorem{lem}[thm]{Lemma}
\newtheorem{prop}[thm]{Proposition}
\theoremstyle{definition}
\newtheorem{defn}[thm]{Definition}
\theoremstyle{remark}
\newtheorem{rem}[thm]{Remark}
\numberwithin{equation}{section}
\newtheorem{definition}[thm]{Definition}
\newtheorem{remark}[thm]{Remark}
\newtheorem{numbering}[thm]{}
\newtheorem{defrmk}[thm]{Definition and Remark}
\newcommand{\CaA}{\mathcal A}
\newcommand{\CaB}{\mathcal B}
\newcommand{\CaJ}{\mathcal J}
\newcommand{\frg}{\mathfrak g}
\newcommand{\bbR}{\mathbb R}
\newcommand{\bbZ}{\mathbb Z}
\newcommand{\rma}{a}
\newcommand{\xo}{x}
\newcommand{\yo}{y}
\newcommand{\abrank}{r}
\newcommand{\sfK}{\mathsf K}
\newcommand{\val}{\nu}
\newcommand{\Ad}{\mathrm{Ad}}
\newcommand{\pid}{\mathfrak p}
\newcommand{\Tr}{\mathrm{Tr}}
\newcommand{\bG}{\mathbf G}
\newcommand{\bC}{\mathbf C}
\newcommand{\bT}{\mathbf T}
\newcommand{\bL}{\mathbf L}
\newcommand{\bM}{\mathbf M}
\newcommand{\bU}{\mathbf U}
\newcommand{\bK}{\sfK}
\newcommand{\bZ}{\mathbf Z}
\newcommand{\rK}{K}
\newcommand{\rJ}{J}
\newcommand{\lieG}{\mathfrak g}
\newcommand{\lieT}{\mathfrak t}
\newcommand{\blieG}{\boldsymbol{\lieG}}
\newcommand{\Xs}{X}
\newcommand{\gsc}{\mathcal S}
\newcommand{\datum}{\Sigma}
\newcommand{\rs}{\mathrm{reg}}
\newcommand{\Bd}{\CaB}
\newcommand{\Apt}{\CaA}
\newcommand{\dpi}{\varrho}
\newcommand{\rtm}{r}
\newcommand{\const}{C}
\newcommand{\urtm}{\rtm_\circ}
\newcommand{\stm}{s}
\newcommand{\supp}{\mathrm{Supp}}
\newcommand{\reg}{{\mathrm{reg}}}
\newcommand{\nil}{\mathcal N}
\newcommand{\Hom}{\mathrm{Hom}}
\newcommand{\Ind}{\mathrm{Ind}}
\newcommand{\cind}{\textrm{c-}\mathrm{ind}}
\newcommand{\ch}{\mathrm{ch}}
\newcommand{\Gal}{\mathrm{Gal}}
\newcommand{\vol}{\mathrm{vol}}
\newcommand{\Hypk}{\textup{(H$k$)}}
\newcommand{\HypB}{\textup{(HB)}}
\newcommand{\HypGT}{\textup{(HGT)}}
\newcommand{\HypN}{\textup{(H$\nil$)}}
\newcommand{\midvsp}{\vspace{7pt}}
\newcommand{\cB}{{\mathcal{B}}}
\newcommand{\fkg}{{\mathfrak g}}
\newcommand{\fkh}{{\mathfrak h}}
\newcommand{\Z}{\mathbb{Z}}
\newcommand{\Q}{\mathbb{Q}}
\newcommand{\R}{\mathbb{R}}
\newcommand{\C}{\mathbb{C}}
\newcommand{\scusp}{\mathrm{sc}}
\newcommand{\hg}{\mathrm{ht}}
\newcommand{\aconst}{A}
\newcommand{\bs}{\backslash}
\def\Gal{{\rm Gal}}
\newcommand{\ind}{{\rm Ind}}
\newcommand{\sd}{\mathrm{sd}}
\newcommand{\fdeg}{\mathrm{deg}}
\newcommand{\sgn}{{\rm sgn}}
\newcommand{\tr}{{\rm tr}\,}
\newcommand{\End}{{\rm End}}
\newcommand{\GL}{{\rm GL}}
\newcommand{\Lie}{{\rm Lie}\,}
\newcommand{\Yu}{{\rm Yu}}
\newcommand{\cO}{\mathcal{O}}
\newcommand{\cX}{\mathcal{X}}
\newcommand{\Irr}{{\rm Irr}}
\newcommand{\Fix}{{\rm Fix}}
\def\hat{\widehat}
\def\ra{\rightarrow}
\def\ol{\overline}
\def\tilde{\widetilde}
\newcommand{\SL}{\mathrm{SL}}
\def\benu{\begin{enumerate}}
\def\eenu{\end{enumerate}}
\def\beq{\begin{equation}}
\def\eeq{\end{equation}}
\def\bit{\begin{itemize}}
\def\eit{\end{itemize}}
\DeclareMathOperator {\Mtr}  {tr}
\providecommand{\set}[1]{\left\{#1\right\}}
\providecommand{\mdede}[4]{
\left(
\begin{smallmatrix}
#1&#2  \\
#3&#4 
\end{smallmatrix}
\right)
}
\newcommand{\Mvol} {vol}
\providecommand{\abs}[1]{\ensuremath{\left|#1\right|}}
\providecommand{\Lquote}[1]{``#1"}
\begin{document}

\title[Local Constancy]{Asymptotics and local constancy of characters of $p$-adic groups}

\author{Ju-Lee Kim}\email{julee@math.mit.edu}
\address{Department of Mathematics, Massachusetts Institute of Technology,
77 Massachusetts Avenue, Cambridge, MA 02139, USA}
\author{Sug Woo Shin}\email{sug.woo.shin@berkeley.edu}
\address{Department of Mathematics, UC Berkeley, Berkeley, CA 94720, USA$//$ Korea Institute for Advanced Study, 85 Hoegiro,
Dongdaemun-gu, Seoul 130-722, Republic of Korea}\thanks{J.-L. Kim was partially supported by NSF grants, S. W. S. was partially supported by NSF grant DMS-1162250/1449558
  and a Sloan Fellowship. N. T. was partially supported by NSF grant DMS-1200684/1454893.}
\author{Nicolas Templier}\email{templier@math.cornell.edu}
\address{Department of Mathematics, Cornell University,
Ithaca, NY 14853, USA}

\date{\today}

\begin{abstract}
In this paper we study quantitative aspects of trace characters $\Theta_\pi$ of reductive $p$-adic groups when the representation $\pi$ varies. Our approach is based on the local constancy of characters and we survey some other related results.  We formulate a conjecture on the behavior of $\Theta_\pi$ relative to the formal degree of $\pi$, which  we are able to prove in the case where $\pi$ is a tame supercuspidal. The proof builds on J.-K.~Yu's construction and the structure of Moy-Prasad subgroups.
\end{abstract}

\maketitle

\tableofcontents

\section{Introduction}\label{s:intro}
For an admissible representation $\pi$ of a $p$-adic reductive group $G$, its trace character distribution is defined by
\[
\langle \Theta_\pi, f \rangle = \Mtr \pi(f),\quad f\in \mathcal{C}_c(G).
\]
 Harish-Chandra showed that it is represented by a locally integrable function on $G$ still denoted by $\Theta_\pi$, which moreover is locally constant on the open subset of regular elements.

Our goal in this paper is to initiate a quantitative theory of trace characters $\Theta_\pi$ when the representation $\pi$ varies. One motivation is towards a better understanding of the spectral side of the trace formula where one would like to control the global behavior of characters~\cite{KST}. Another motivation comes from the Weyl character formula. For a finite dimensional representation $\sigma$ of a compact Lie group and a regular element $\gamma$,
\[
 D(\gamma)^{\frac12} |\tr \sigma(\gamma)|
\le
|W|,
\]
where $W$ is the Weyl group and $D(\gamma)$ is the Weyl discriminant which appears in the denominator of the character formula. More generally the Harish-Chandra formula for characters of discrete series yields similar estimates for real reductive groups, see \S\ref{sub:real-group-char} below.

If $\pi$ is a square-integrable representation of $G$ we denote by $\deg(\pi)$ its formal degree. Let $\gamma$ be a fixed regular semisimple element. The central conjecture we would like to propose in this paper (Conjecture \ref{c:asymptotic-char}) is essentially that $\frac{\Theta_\pi(\gamma)}{\deg(\pi)}$ converges to zero as $\deg(\pi)$ grows.

It is nowadays possible to study such a question thanks to recent progress in constructing supercuspidal representations and computing their trace characters, see notably~\cite{ADSS:supercuspidal-characters,Adler-Spice:supercuspidal-characters} and the references there.

The main result of this paper (Theorems \ref{t:asymptotic-char} and \ref{thm:uniform-bound}, with the latter improved as in \S\ref{sub:assumption} below) verifies our conjecture for the tame supercuspidal representations $\pi$ constructed by J.-K.~Yu for topologically unipotent elements $\gamma$ when the residual characteristic of the base field is large enough (in an effective manner). In such a setup we establish that for some constants $A,\kappa>0$ depending on the group $G$,
\begin{equation}\label{mainresult}
\frac{D(\gamma)^A  |\Theta_\pi(\gamma)|}{\deg(\pi)^{1-\kappa}}
\end{equation}
is bounded both as a function of $\gamma$ topologically unipotent and as $\pi$ varies over the set of irreducible supercuspidal representations of $G$.

Yu's contruction gives tame supercuspidal representations $\pi=\cind_J^G \rho$ as compactly induced from an explicit open compact-modulo-center subgroup $J$ given in terms of a sequence of tamely ramified twisted Levi subgroups (whose definition is recalled in \S\ref{sub:generic-datum} below).  The main theorem of Yu~\cite{Yu01} is that the induction is irreducible, and therefore is supercuspidal. This may be summarized by the inclusions,
\[
	\Irr^{\mathrm{Yu}}(G) \subset \Irr^{\cind}(G) \subset \Irr^{\mathrm{sc}}(G),
\]
where $\Irr^{\mathrm{sc}}(G)$ consists of all irreducible supercuspidal representations (up to isomorphism), and the first two subsets are given by Yu's construction and by compact induction from open compact-modulo-center subgroups, respectively.
The formal degree $\deg(\pi)$ is proportional to $\dim(\rho)/\vol(J)$. Moreover the first-named author~\cite{Kim07} has shown that if the residue characteristic is large enough, then Yu's construction exhausts all supercuspidals, i.e. the above inclusions are equalities. This means that our result \eqref{mainresult} is true for \emph{all} supercuspidal representations in that case.

One important ingredient in proving our main result is using the local constancy of characters. For a given regular semisimple element $\gamma$, if $\Theta_\pi$ is constant on $\gamma K$ for a (small) open compact subgroup $K$ of $G$, then
\begin{equation}\label{i:locconst}
\Theta_\pi(\gamma)=\frac1{\vol(K)}
\langle
\Theta_\pi , 1_{\gamma K}
\rangle
=\mathrm{trace}(\pi(\gamma)|V_\pi^{K})
\end{equation}
where $1_{\gamma K}$ is the characteristic function of $\gamma K$. The results of \cite{AK07} and \cite{Meyer-Solleveld:growth} determine the size of $K$, which depends on the (Moy-Prasad) depth of $\pi$ and the singular depth of $\gamma$ (see Definition \ref{d:singular-depth} below). For our main result, as we vary $\pi$ such that the formal degree of $\pi$ increases (equivalently, the depth of $\pi$ increases), we choose $K$ appropriately to be able to approximate the size of $\Theta_\pi(\gamma)$. Write $G_x$ for the parahoric subgroup of $G$ associated to $x$. The fact that $\pi=\cind^G_J\rho$, via Mackey's formula, allows us to bound $|\Theta_\pi(\gamma)|/\deg(\pi)$ in terms of the number of fixed points of $\gamma$ (which may be assumed to lie in $G_{x}$) acting on $(G_{x} \cap gJg^{-1})\bs G_{x}$ by right translation for various $g\in G$. To bound the cardinality of the fixed points we prove quite a few numerical inequalities as Yu's data vary by a systematic study of Moy-Prasad subgroups in Yu's construction.

The celebrated regularity theorem of Harish-Chandra~\cite{HC70} says that $D(\gamma)^{\frac12} \Theta_\pi(\gamma)$ is locally bounded as a function of $\gamma$ and similarly for any $G$-invariant admissible distribution. It implies that $\Theta_\pi$ is given by a locally integrable function on $G$ and moreover there is a germ expansion~\cite{HC99} when $\gamma$ approaches a non-regular element. In comparison our result concerning~\eqref{mainresult} is much less precise but at the same time we also allow $\pi$ to vary.

The local constancy~\eqref{i:locconst} is used similarly in~\cite{KL:unipotent,KL:Steinberg} to compute the characters of unipotent representations at very regular elements. In such situation the depth of $\pi$ is sufficiently larger than the singular depth of $\gamma$, and the size of $K$ is determined by the depth of $\pi$. Another application of the local constancy of trace characters is~\cite{Meyer-Solleveld:growth} which considers trace characters of representations $\pi$ in positive characteristic different from $p$. Among other results they show that the trace character $\Theta_\pi$ exists as a function essentially as consequence of the formula~\eqref{i:locconst}.

\subsection*{Acknowledgment}
We would like to thank Julius Witte for his comments and also for pointing out a mistake in Lemma~\ref{lem: centralizer2} in an earlier version. We would like to thank Anne-Marie Aubert and the referee for their helpful comments.

\subsection*{Notation and Conventions}
Let $p$ be a prime. Let $k$ be a finite extension of $\Q_p$. Denote by $q$ the cardinality of the residue field of $k$. For any tamely ramified finite extension $E$ of $k$, let $\nu$ denote the valuation on $E$ which coincides with the valuation of $\Q_p$ when restricted. Let $\mathcal O_E$ and $\mathfrak p_E$ be the ring of integers in $E$ and the prime ideal of $\mathcal O_E$ respectively. We fix an additive character $\Omega_k$ of $k$ with conductor $\mathfrak p_k$.

  Let $\bG$ be a connected reductive group over $k$, whose Lie algebra is denoted $\blieG$. Let $\abrank_G$ be the difference between the absolute rank of $\bG$ (the dimension of any maximal torus in $\bG$) and the dimension of the center $\bZ_\bG$ of $G$. Write $G$ and $\lieG$ for $\bG(k)$ and $\blieG(k)$, respectively. The linear dual of $\lieG$ is denoted by $\lieG^*$. Denote the set of regular semisimple elements in $G$ by $G_{\rs}$.

Throughout the paper,  by a unipotent subgroup, we mean the unipotent subgroup given by the unipotent radical of a parabolic subgroup.

For a subset $S$ of a group $H$ and an element $g\in H$, we write $S^g$ or $\,^{g^{-1}}\!S$ for $g^{-1}Sg$. Similarly if $g,h\in H$, we write $h^g$ or $\,^{g^{-1}}\!h$ for $g^{-1}hg$. If $S$ is a subgroup of $H$ and $\xi$ is a representation of $S$, denote by $\xi^g$ or $\,^{g^{-1}}\!\xi$ the representation of $S^g=\,^{g^{-1}}\!S$ given by $\xi^g(s)=\,^{g^{-1}}\!\xi(s)=\xi(gs g^{-1})$, $s\in S$.

\section{Minimal K-types and Yu's construction of supercuspidal representations}\label{s:Yu-construction}

In this section we review the construction of supercuspidal representations of a $p$-adic reductive group from the so-called generic data due to Jiu-Kang Yu and recall a result by the first author that his construction exhausts all supercuspidal representations provided the residue characteristic of the base field is sufficiently large. The construction yields a supercuspidal representation concretely as a compactly induced representation, and this will be an important input in the next section.

\subsection{Moy-Prasad filtrations}\label{sub:notation Bd}

For a tamely ramified extension $E$ of $k$,
denote by $\Bd(\bG,E)$ (resp. $\Bd^{\mathrm{red}}(G)$) be the extended (resp. reduced) building of $\bG$ over $E$. When $E=k$, we write $\Bd(G)$ (resp. $\Bd^{\mathrm{red}}(G)$) for $\Bd(\bG,k)$ (resp. $\Bd^{\mathrm{red}}(\bG,k)$) for simplicity.
If $\bT$ is a maximal $E$-split $k$-torus, let
$\Apt(\bT,\bG,E)$ denote the apartment associated to $\bT$ in $\Bd(\bG,E)$.
When $E=k$, write $\Apt(T)$ for the same apartment.
It is known that for any tamely ramified
 Galois extension $E'$ of $E$,
$\Apt(\bT,\bG,E)$ can be identified with
the set of all $\Gal(E'/E)$-fixed points in
$\Apt(\bT,\bG,E')$. Likewise, $\Bd(\bG,E)$ can be embedded into $\Bd(\bG,E')$
and its image is equal to the set of the Galois fixed points in $\Bd(\bG,E')$
\cite{Rou77, Pra01}.

For $(x,r)\in\Bd(\bG,E)\times\bbR$, there is
a filtration lattice $\blieG(E)_{x,r}$ and a subgroup $\bG(E)_{x,r}$ if
$\rtm\ge0$ defined by Moy and Prasad \cite{MP94}.
We assume that the valuation is normalized such that
for a tamely ramified Galois extension $E'$ of $E$
and $x\in\Bd(\bG,E)\subset\Bd(\bG,E')$, we have
\[
\blieG(E)_{x,r}=\blieG(E')_{x,r}\cap\blieG(E).
\]
If $r>0$, we also have
\[
\bG(E)_{x,r}=\bG(E')_{x,r}\cap\bG(E).
\]
For simplicity, we put $\lieG_{x,\rtm}:=\blieG(k)_{x,\rtm}$ and $G_{x,\rtm}:=\bG(k)_{x,\rtm}$, etc.
We will also use the following notation. Let $\rtm\in\bbR$ and $x\in\Bd(G)$:
\begin{enumerate}
\item
$\lieG_{x,r^+}:=\cup_{s>r} \lieG_{x,s}$, and if $r\ge 0$,
$G_{x,r^+}:=\cup_{s>r} G_{x,s}$;
\item
$\lieG^\ast_{x,r}:=\left\{\chi\in\lieG^\ast
\mid\chi(\lieG_{x,(-r)^+})\subset\pid_k\right\}$;
\item
$\lieG_r:=\cup_{y\in\Bd(G)} \lieG_{y,r}$ and
$\lieG_{r^+}:=\cup_{s>r} \lieG_s$;
\item
$G_r:=\cup_{y\in\Bd(G)} G_{y,r}$ and
$G_{r^+}:=\cup_{s>r} G_s$ for $r\ge0$.
\item
For any facet $F\subset \Bd(G)$, let $G_F:=G_{x,0}$ for some $x\in F$. Let $[F]$ be the image of $F$ in $\Bd^{\mathrm{red}}(G)$. Then,  let $G_{[F]}$ denote the stabilizer of $[F]$ in $G$. Note that $G_F\subset G_{[F]}$. Similarly, $G_{[x]}$ is the stabilizer of $[x]\in\Bd^{\mathrm{red}}(G)$ in $G$. However, $G_x$ will denote $G_{x,0}$, the parahoric subgroup associated to $x$.
\end{enumerate}


\subsection{\bf Unrefined minimal $\bK$-types and good cosets}
\label{sec: Ktypes}
For simplicity, as in \cite{MP94},
we assume that there is a natural isomorphism
$\iota:G_{x,\rtm}/G_{x,\rtm^+}\longrightarrow\lieG_{x,\rtm}/\lieG_{x,\rtm^+}$
when $\rtm>0$. 
By \cite[(2.4)]{Yu01}, such an isomorphism exists
whenever $\bG$ splits over a tamely ramified extension of $k$
(see also \cite[\S1.6]{Adl98}).

\begin{definition}\label{def: unrefined}
\rm
An {\it unrefined minimal $\bK$-type} (or {\it minimal $\bK$-type})
is a pair $(G_{x,\dpi},\chi)$,
where $x\in\Bd(G)$, $\dpi$ is a nonnegative real number,
$\chi$ is a representation of $G_{x,\dpi}$ trivial on $G_{x,\dpi^+}$ and

(i) if $\dpi=0$, $\chi$ is an irreducible cuspidal representation of $G_{x}/G_{x,0^+}$
inflated to $G_{x}$,

(ii) if $\dpi>0$, then $\chi$ is
a nondegenerate character of $G_{x,\dpi}/G_{x,\dpi^+}$.
\end{definition}

The $\dpi$ in the above definition is called the {\it depth} of
the minimal $\bK$-type $(G_{x,\dpi},\chi)$.
Recall that a coset $X+\lieG^\ast_{x,(-\dpi)^+}$ in $\lieG^\ast$
is nondegenerate if $X+\lieG^\ast_{x,(-\dpi)^+}$ does not contain any
nilpotent element. If a character $\chi$ of $G_{x,\dpi}$ is {\it represented} by $X+\lieG^\ast_{x,(-\dpi)^+}$, i.e. $\chi(g)=\Omega_k(X'(\iota(g))$ with $X'\in X+\lieG^\ast_{x,(-\dpi)^+}$, a character $\chi$ of $G_{x,\dpi}$ is {\it nondegenerate}
if $X+\lieG^\ast_{x,(-\dpi)^+}$ is nondegenerate.

\begin{definition} \rm
Two minimal $\bK$-types $(G_{x,\dpi},\chi)$ and $(G_{x',\dpi'},\chi')$ are
said to be {\it associates} if they have the same depth $\dpi=\dpi'$, and
\begin{enumerate}
\item
if $\dpi=0$, there exists $g\in G$ such that $G_{x}\cap G_{gx'}$
surjects onto both $G_{x}/G_{x,0^+}$ and $G_{gx'}/G_{gx',0^+}$,
and $\chi$ is isomorphic to $\,^{g}\!\chi'$,
\item
if $\dpi>0$, the $G$-orbit of the coset
which realizes $\chi$ intersects the coset which realizes $\chi'$.
\end{enumerate}
\end{definition}

We also recall the definition of good cosets. In \S3, we will
prove some facts concerning good $\bK$-types.
The following is a minor modification of the definition in \cite{AR00} (see also \cite[\S2.4]{Asym1}).

\begin{definition}\label{defn: good}\rm \
\begin{enumerate}
\item
Let $\bT\subset\bG$ be a maximal $k$-torus which splits over a tamely
ramified extension $E$ of $k$. Let $\Phi(\bT,E)$ be the set of
$E$-roots of $\bT$. Then, $X\in\lieT$ is a \emph{good element
of depth $\rtm$} if $X\in\lieT_r\setminus\lieT_{r^+}$
and for any $\alpha\in\Phi(\bT,E)$, $\nu(d\alpha(X))= r$ or $\infty$.
\item
Let $r<0$ and $x\in\Bd(G)$.
A coset $\gsc$ in $\lieG_{x,r}/\lieG_{x,r^+}$
is {\it good} if there is a good
element $X\in\lieG$ of depth $r$ such that
$\gsc=X+\lieG_{x,r^+}$ and $x\in\Bd(\bC_{\bG}(X),k)$.
\item
A minimal $\bK$-type $(G_{x,\dpi},\chi)$ with $\dpi>0$ is {\it good} if the associated dual coset is good.
\end{enumerate}
\end{definition}

\subsection{Generic $G$-datum}\label{sub:generic-datum}

Yu's construction of supercuspidal representations
starts with a {\it generic $G$-datum}, which consists of five components.
Recall $\bG'\subset \bG$ is
a \emph{tamely ramified twisted Levi subgroup}
if $\bG'(E)$ is a Levi subgroup of $\bG(E)$ for a tamely ramified
extension $E$ of $k$.

\begin{definition}\label{defn: generic G-datum}\rm
A {\it generic $G$-datum} is a quintuple
$\datum=(\vec\bG,\xo,\vec\rtm,\vec\phi,\rho)$ satisfying the following:

\item{${\mathbf{D}}1.$}
$\vec{\bG}=(\bG^0\subsetneq\bG^1\subsetneq\cdots\subsetneq\bG^d=\bG)$ is a tamely ramified twisted
Levi sequence such that $\bZ_{\bG^0}/\bZ_{\bG}$ is anisotropic.

\midvsp

\item{${\mathbf{D}}2.$}
$\xo\in\Bd(\bG^0,k)$.

\midvsp

\item{${\mathbf{D}}3.$}
$\vec\rtm=(\rtm_0,\rtm_1,\cdots,\rtm_{d-1},\rtm_d)$ is
a sequence of positive real numbers
with $0<\rtm_0<\cdots<\rtm_{d-2}< \rtm_{d-1}\le\rtm_d$ if $d>0$, and
$0\le\rtm_0$ if $d=0$.

\midvsp

\item{${\mathbf{D}}4.$}
$\vec\phi=(\phi_0,\cdots,\phi_d)$ is a sequence of quasi-characters,
where $\phi_i$ is a generic quasi-character of $G^i$ (see \cite[\S9]{Yu01} for the definition
of generic quasi-characters);
$\phi_i$ is trivial on $G^i_{\xo,\rtm_i^+}$, but
non-trivial on $G^i_{\xo,\rtm_i}$ for $0\le i\le d-1$.
If $\rtm_{d-1}<\rtm_d$,
then $\phi_d$ is trivial on
$G^d_{\xo,\rtm{}^+_d}$ and nontrivial on $G^d_{\xo,\rtm_d}$, and otherwise if $r_{d-1}=r_d$, then $\phi_d=1$.

\midvsp

\item{${\mathbf{D}}5.$}
$\rho$ is an irreducible representation of $G^0_{[\xo]}$,
the stabilizer in $G^0$ of the image $[\xo]$ of $\xo$
in the reduced building of $\bG^0$,
such that $\rho|G^0_{\xo,0^+}$ is isotrivial
and $c\textrm{-Ind}_{G^0_{[\xo]}}^{G^0}\rho$ is irreducible and supercuspidal.
\end{definition}

\begin{rem}\label{rem: G-datum} \

\begin{enumerate}
\item
By (6.6) and (6.8) of \cite{MP96},
${\mathbf{D}}5$ is equivalent to the condition that
$G^0_{\xo}$ is a maximal parahoric subgroup in $G^0$ and
$\rho|G^0_{\xo}$ induces a cuspidal representation of
$G^0_{\xo}/G^0_{\xo,0^+}$.
\item
Recall from \cite{Yu01} that there is a canonical sequence of embeddings
$\Bd(\bG^0,k)\hookrightarrow\Bd(\bG^1,k)\hookrightarrow
\cdots\hookrightarrow\Bd(\bG^d,k)$.
Hence, $\xo$ can be regarded as a point of each $\Bd(\bG^i,k)$.
\item
There is a finite number of pairs $(\vec\bG, x)$ up to $G$-conjugacy, which arise in a generic $G$-datum: By \S1.2 in \cite{KY11}, there are finitely many choices for $\vec \bG$ up to $G$-conjugacy. In particular,  there are finitely many choices for $\bG^0$, and for each $\bG^0$ the number of vertices in $\Bd(G^0)$ is finite up to $G^0$-conjugacy.
\end{enumerate}

\end{rem}

\subsection{Construction of $J_{\datum}$}
\label{sub:notation yu}
Let $\datum=(\vec\bG,\xo,\vec\rtm,\vec\phi,\rho)$ be a generic $G$-datum. Set $s_i:=r_i/2$ for each~$i$. Associated to $\vec\bG,\ \xo$ and $\vec\rtm$, we define the following open compact subgroups.
\begin{enumerate}
\item
$\rK^0:=G^0_{[\xo]}$ ; $\rK^{0}_+:=G^0_{\xo,0^+}$.
\item
$\rK^i:= G^0_{[\xo]}G^1_{\xo,s_0}\cdots G^i_{\xo,s_{i-1}}$ ;
$\rK^{i}_+:= G^0_{\xo,0^+}G^1_{\xo,s_0^+}\cdots G^i_{\xo,s_{i-1}^+}$ \
for $1\le i\le d$.

\item
$J^i:=(\bG^{i-1},\bG^i)(k)_{\xo,(\rtm_{i-1},\stm_{i-1})}$ ;
$J^i_+:=(\bG^{i-1},\bG^i)(k)_{\xo,(\rtm_{i-1},\stm_{i-1}^+)}$ in the notation of \cite[\S1]{Yu01}.
\end{enumerate}
For $i>0$, $J^i$ is a normal subgroup of $\rK^i$ and we have $\rK^{i-1} J^i=\rK^i$ (semi-direct product). Similarly $J^i_+$ is a normal subgroup of $\rK^i_+$ and $\rK^{i-1}_+ J^i_+=\rK^{i}_+$.  Finally let $J_{\datum}:=K^d$ and $J_+:=K^{d}_+$, and also $s_\Sigma:=s_{d-1}$ and $r_\Sigma:=r_{d-1}$. When there is no confusion, we will drop the subscript $\Sigma$ and simply write $J,\ r,\ s$, etc.

\subsection{Construction of $\rho_{\datum}$} \label{sub:yusc}

One can define the character $\hat\phi_i$ of $\rK^0G^i_{\xo}G_{\xo,\stm_i^+}$
extending $\phi_i$ of $\rK^0G^i_{\xo}\subset G^i$.
For $0\le i<d$, there exists by the Stone-von Neumann theorem a representation $\tilde\phi_i$ of $\rK^i\ltimes J^{i+1}$ such that $\tilde\phi_i|J^{i+1}$ is $\hat\phi_i|J^{i+1}_+$-isotypical and $\tilde\phi_i|\rK^{i}_+$ is isotrivial.

Let $\textrm{inf}(\phi_i)$ denote the inflation of $\phi_i|\rK^i$
to $\rK^i\ltimes J^{i+1}$. Then $\mathrm{inf}(\phi_i)\otimes\tilde\phi_i$
factors through a map
\[
\rK^i\ltimes J^{i+1}\longrightarrow \rK^i\rJ^{i+1}=\rK^{i+1}.
\]
Let $\kappa_i$ denote the corresponding representation of $\rK^{i+1}$.
Then it can be extended trivially to $\rK^d$,
and we denote the extended representation again by $\kappa_i$
(in fact $\kappa_i$ could be further extended to the semi-direct product $K^{i+1}G_{x,s_{i+1}}\supset K^d$ by making it trivial on $G_{x,s_{i+1}}$). Similary we extend $\rho$ from $G^0_{[x]}$ to a representation of $\rK^d$ and denote
this extended representation again by $\rho$.
Define a representation $\kappa$ and $\rho_{\datum}$
of $\rK^d$ as follows:
\begin{equation}\label{eq: construction-sc}
\begin{array}{ll}
\kappa&:=\kappa_0\otimes\cdots\otimes\kappa_{d-1}\otimes(\phi_d|\rK^d),\\
\rho_{\datum}&:=\rho\otimes\kappa.
\end{array}
\end{equation}
Note that $\kappa$ is defined only from $(\vec\bG,\xo,\vec\rtm,\vec\phi)$
independently of $\rho$.

\begin{rem}
One may construct $\kappa_i$ as follows:
set $J_1^i:=G^i_{x,0^+}G_{x,s_i}$ and $J_2^i:=G^i_{x,0^+}G_{x,s_i^+}$.
Write also $\hat\phi_i$ for the restriction of $\hat\phi_i$ to $J^i_2$.
Then,  one can extend $\hat\phi_i$ to
$J^i_1$ via Heisenberg representation and to $G^i_{[y]}G_{x,s_i}$
by Weil representation upon fixing a special isomorphism (see \cite{Yu01} for details):
\[
\begin{array}{lclcl}
J_2^i&\rightarrow&J_1^i&\rightarrow&G^i_{[y]}G_{x,s_i}\\
\hat\phi_i& &\rho_{\hat\phi_i}& &\omega_{\hat\phi_i}.
\end{array}
\]
Note that we have inclusions $J_\Sigma=K^d\subset G^i_{[\xo]}G_{\xo,s_i^+}\subset G_{[\xo]}$, and we have
$\kappa_i\simeq\omega_{\hat\phi_i}|K^d$.
\end{rem}

\begin{thm} [Yu]\label{t:Yu}
$\pi_{\datum}=c\textrm{-}\mathrm{Ind}_{J_\Sigma}^G\rho_{\datum}$
is irreducible and thus supercuspidal.
\end{thm}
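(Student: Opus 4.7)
The plan is to apply Mackey's irreducibility criterion for compact induction from an open subgroup that is compact modulo the center: $\pi_\datum = \cind_{J_\datum}^G \rho_\datum$ is irreducible if and only if, for every $g \in G \setminus J_\datum$, the intertwining space
\[
I(g) := \Hom_{J_\datum \cap {}^{g^{-1}}\!J_\datum}(\rho_\datum, {}^{g^{-1}}\!\rho_\datum)
\]
vanishes. Once irreducibility is granted, supercuspidality is automatic: condition $\mathbf{D}1$ (anisotropy of $\bZ_{\bG^0}/\bZ_\bG$) forces $G^0_{[x_0]}$ to be compact modulo $Z_G$, and each $J^i$ is compact, so $J_\datum$ is compact modulo $Z_G$ and matrix coefficients of $\pi_\datum$ are compactly supported modulo $Z_G$.

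I would analyze the intertwining set $I_G(\rho_\datum) := \{g \in G : I(g) \neq 0\}$ by induction up the twisted Levi tower, exploiting the factorization $\rho_\datum = \rho \otimes \kappa_0 \otimes \cdots \otimes \kappa_{d-1} \otimes (\phi_d|_{K^d})$. The base case is $\mathbf{D}5$: the intertwining of $\rho$ as a representation of $G^0_{[x_0]}$ in $G^0$ is exactly $G^0_{[x_0]}$, so $\cind_{G^0_{[x_0]}}^{G^0}\rho$ is irreducible supercuspidal. The inductive step rests on two lemmas.

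\emph{Lemma A (Intertwining of a generic character).} Using the genericity of $\phi_i$ in the sense of \cite[\S 9]{Yu01}, the restriction of $\phi_i$ to $G^{i+1}_{x_0,s_i^+}$ has $G^{i+1}$-intertwining equal to $G^i \cdot G^{i+1}_{x_0,s_i}$. The key input is that $d\phi_i$, viewed in $(\blieG^{i+1})^*_{x_0,-r_i}$, is a good element whose $G^{i+1}$-centralizer is $G^i$; this combined with a root-by-root Bruhat-type decomposition of $G^{i+1}_{x_0,s_i^+}\backslash G^{i+1}/G^{i+1}_{x_0,s_i^+}$ yields the claim.

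\emph{Lemma B (Heisenberg-Weil propagation).} The commutator pairing $(x,y) \mapsto \hat\phi_i([x,y])$ on $J^{i+1}/J^{i+1}_+$ is a nondegenerate symplectic form, so the Heisenberg representation $\rho_{\hat\phi_i}$ of $J^{i+1}_1$ extending $\hat\phi_i$ is the unique irreducible lift of its central character, and the Weil representation $\omega_{\hat\phi_i}$ of $G^i_{[x_0]}G_{x_0,s_i} \ltimes J^{i+1}_1$ that extends it is unique up to twist. Hence any element intertwining $\kappa_i = \omega_{\hat\phi_i}|_{K^d}$ in $G^{i+1}$ must intertwine $\hat\phi_i$ on $J^{i+1}_+$, and by Lemma A lies in $K^{i+1}$.

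Assembling the pieces: the strict inequalities $r_0 < r_1 < \cdots < r_{d-1} \le r_d$ ensure that the depths $s_i$ of the distinct Heisenberg-Weil factors are separated, so an intertwiner of $\rho_\datum$ must successively intertwine each $\kappa_i$ at its own level, forcing it down to an intertwiner of $\rho$ inside $G^0$, which then lies in $G^0_{[x_0]} \subset K^d = J_\datum$. The main obstacle will be the rigorous implementation of this descent, in particular Lemma B: the Heisenberg-Weil construction depends on a choice of special isomorphism whose compatibility with the tame Levi structure at successive levels must be verified, and one must also rule out spurious intertwining coming from the Weil-representation piece over $G^i_{[x_0]}G_{x_0,s_i}$ outside of $K^{i+1}$. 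A secondary difficulty is bookkeeping with the Moy-Prasad filtrations to show that the tensor $\kappa_0 \otimes \cdots \otimes \kappa_{d-1}$ has intertwining set contained in $K^d$ rather than a larger group, which amounts to showing that the layered Moy-Prasad subgroups behave well under the products $K^{i-1}J^i = K^i$ and $K^{i-1}_+ J^i_+ = K^i_+$ already recorded in Section \ref{sub:notation yu}.
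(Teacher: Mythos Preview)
The paper does not give its own proof of this theorem: it is stated as Yu's result and simply cited to \cite{Yu01}. There is therefore nothing in the paper to compare your argument against.

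That said, your outline is a reasonable sketch of Yu's original strategy in \cite{Yu01}: irreducibility is reduced via Mackey's criterion to showing that the intertwining set $I_G(\rho_\datum)$ equals $J_\datum$, and this is proved by induction along the twisted Levi sequence, using at each step the genericity of $\phi_i$ to control the intertwining of $\hat\phi_i$ (your Lemma~A is essentially \cite[Thm.~9.4]{Yu01}) together with the Heisenberg--Weil machinery (your Lemma~B corresponds to \cite[\S11]{Yu01}). You have correctly identified the delicate points---the compatibility of the special isomorphisms across levels and the bookkeeping needed to show that intertwining of the tensor product descends cleanly---and these are indeed where most of the technical work in \cite{Yu01} lies. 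Since the present paper only invokes this theorem as a black box, a citation is all that is required here.
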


\begin{rem}\label{rmk: minimal type} Let $\datum$ be a generic $G$-datum. If $G$ is semisimple, comparing Moy-Prasad minimal $K$-types and Yu's constructions, we observe the following:
\begin{enumerate}
\item The depth of $\pi_{\datum}$ is given $r_\datum=r_d=r_{d-1}$. (Even if $G$ is not semisimple, the depth is $r_d$, cf. \cite[Remark 3.6]{Yu01}, but it may not equal $r_{d-1}$.)
\item $(G_{\xo,r_{d-1}},\phi_{d-1})$ is a good minimal $K$-type of $\pi_{\datum}$ in the sense of \cite{Asym1}.
\end{enumerate}
\end{rem}

\subsection{Supercuspidal representations via compact induction}\label{sub:sc-cpt-ind}

Denote by $\Irr(G)$ the set of (isomorphism classes of) irreducible smooth representations of $G$. Fix a Haar measure on $G$. Write $\Irr^2(G)$ (resp. $\Irr^\scusp(G)$) for the subset of square-integrable (resp. supercuspidal) members. For each $\pi\in \Irr^2(G)$ let $\fdeg(\pi)$ denote the formal degree of $\pi$.  For each $\pi\in \Irr(G)$, $\Theta_\pi$ is the Harish-Chandra character, which is in $L^1_{\mathrm{loc}}(G)$ and locally constant on $G_\reg$.

  Define $\Irr^{\Yu}(G)$ to be the subset of $\Irr^\scusp(G)$ consisting of all supercuspidal representations which are constructed by Yu, namely of the form $\pi_\Sigma$  as above. Write $\Irr^{\cind}(G)$ for the set of $\pi\in\Irr^\scusp(G)$ which are compactly induced, meaning that there exist an open compact-mod-center subgroup $J\subset G$ and an irreducible admissible representation $\rho$ of $J$ such that $\pi\simeq \cind^{G}_J(\rho)$. We have that
  \[\Irr^{\Yu}(G) \subset \Irr^{\cind}(G) \subset \Irr^\scusp(G).\]
  The first inclusion is a consequence of Yu's theorem (Theorem \ref{t:Yu}) and generally strict. A folklore conjecture asserts that the second inclusion is always an equality. It has been verified through the theory of types for $\GL_n$ and $\SL_n$ by Bushnell-Kutzko, for inner forms of $\GL_n$ by Broussous and S\'echerre-Stevens, and for $p$-adic classical groups by S. Stevens when $p\neq2$ (\cite{BK93}, \cite{BK94}, \cite{Bro98}, \cite{SS08} and \cite{Ste08}). In general,
  according to the main result of \cite{Kim07}, there exists a lower bound $p_0=p_0(k,G)$ (depending on $k$ and $G$) such that both inclusions are equalities if $p\ge p_0$. Precisely this is true for every prime $p$ such that the hypotheses $\Hypk$, (HB), (HGT), and $\HypN$ of \cite[\S3.4]{Kim07} are satisfied.

\subsection{Hypotheses}\label{sub:hypo}
The above hypotheses will be assumed in a variety of our results in the next two sections. We will clearly state when the hypotheses are needed. As they are too lengthy to copy here, the reader interested in the details is referred to \cite[\S3.4]{Kim07}. For our purpose it suffices to recall the nature of those hypotheses: $\Hypk$ is about the existence of filtration preserving exponential map, (HB) is to identify $\lieG$ and its linear dual $\lieG^\ast$, (HGT) is about the abundance of good elements and $\HypN$ is regarding nilpotent orbits.

\subsection{Formal degree} Recall that $\deg(\pi)$ denotes the formal degree of $\pi$.
\begin{lem}\label{l:bound-on-d} Let $\Sigma$ be a generic $G$-datum. Then

\benu
\item[(i)] $\deg(\pi_\Sigma)=\dim(\rho_\Sigma)/\vol_{G/Z_G}(J_\Sigma/Z_G)$.
\item[(ii)]  $\frac{1}{\vol_{G/Z_G}(J_\Sigma/Z_G)}\le \deg(\pi_\Sigma)\le \frac{q^{\dim(\lieG)}}{\vol_{G/Z_G}(J_\Sigma/Z_G)}.$
\eenu
\end{lem}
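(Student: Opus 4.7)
The plan for part (i) is to invoke the classical formal-degree formula for supercuspidals obtained by compact induction from an open compact-mod-center subgroup. Since $\pi_\Sigma = \cind^G_{J_\Sigma} \rho_\Sigma$ is irreducible by Theorem \ref{t:Yu} and $J_\Sigma$ is compact modulo $Z_G$, the central character of $\rho_\Sigma$ agrees with that of $\pi_\Sigma$, and the matrix coefficients of $\pi_\Sigma$ are compactly supported modulo $Z_G$. A direct computation of the $L^2$-norm of a matrix coefficient, using the disjointness of translates of $J_\Sigma$ under $Z_G\backslash G$ and Schur orthogonality for $\rho_\Sigma$ on the compact group $J_\Sigma/Z_G$, then yields $\deg(\pi_\Sigma) = \dim(\rho_\Sigma)/\vol_{G/Z_G}(J_\Sigma/Z_G)$. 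This is completely standard and I would just cite the appropriate reference.

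For (ii), the lower bound is immediate from $\dim(\rho_\Sigma)\ge 1$, so the real work is the upper bound. I would decompose $\rho_\Sigma = \rho \otimes \kappa_0 \otimes \cdots \otimes \kappa_{d-1} \otimes (\phi_d|_{K^d})$ as in (\ref{eq: construction-sc}) and estimate each factor. The character $\phi_d|_{K^d}$ contributes $1$. The representation $\rho$ is inflated from a cuspidal of the finite reductive group $\mathsf G^0_x(\F_q) \simeq G^0_x/G^0_{x,0^+}$; using $\dim\rho \le |G^0_{[x]}/G^0_{x,0^+}|$ together with the bound $|\mathsf G^0_x(\F_q)|\le q^{\dim \mathsf G^0_x}\le q^{\dim \lieG^0}$ valid for connected reductive groups over $\F_q$, one obtains $\dim \rho \le q^{\dim \lieG^0}$. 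By the Stone--von Neumann theorem, each $\kappa_i$ has dimension equal to $[J^{i+1}:J^{i+1}_+]^{1/2}$, and the Moy--Prasad description of $J^{i+1}/J^{i+1}_+$ identifies it with an $\F_q$-vector space of dimension at most $\dim\lieG^{i+1}-\dim\lieG^i$, giving $\dim\kappa_i \le q^{(\dim\lieG^{i+1}-\dim\lieG^i)/2}$. Multiplying, the contribution of the $\kappa_i$'s is $\le q^{(\dim\lieG-\dim\lieG^0)/2}$, so
\[
\dim\rho_\Sigma \le q^{\dim\lieG^0}\cdot q^{(\dim\lieG-\dim\lieG^0)/2} = q^{(\dim\lieG+\dim\lieG^0)/2}\le q^{\dim\lieG},
\]
which gives (ii) after dividing by $\vol_{G/Z_G}(J_\Sigma/Z_G)$ via part (i).

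The main obstacle is not conceptual but bookkeeping: one must identify the symplectic quotient $J^{i+1}/J^{i+1}_+$ in Yu's normalization with the appropriate Moy--Prasad Lie-algebra quotient under the isomorphism $\iota$ of \S\ref{sub:notation Bd}, and verify that it sits in the $\lieG^{i+1}/\lieG^i$-transversal part of the filtration so that the dimensions telescope as above. These identifications are standard consequences of the generic condition on $\phi_i$ and the $(r_i,s_i)$-convention in Yu's construction. The bound in (ii) is quite crude, but the point is its uniformity in $\Sigma$, which is what is needed in the applications in the following sections.
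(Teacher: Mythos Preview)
Your proposal is correct and follows essentially the same route as the paper: part~(i) is the standard formal-degree formula for compactly induced supercuspidals (the paper cites \cite[Thm~A.14]{BH96}), and for part~(ii) the paper also bounds $\dim\rho$ by the order of the finite reductive quotient and each $\dim\kappa_i$ by $[J^{i+1}:J^{i+1}_+]^{1/2}\le q^{(\dim\lieG^{i+1}-\dim\lieG^i)/2}$, then telescopes. One small bookkeeping point: you write $\dim\rho\le |G^0_{[x]}/G^0_{x,0^+}|$ but then invoke the bound $|\sfG^0_x(\F_q)|\le q^{\dim\lieG^0}$, which controls $|G^0_x/G^0_{x,0^+}|$ rather than $|G^0_{[x]}/G^0_{x,0^+}|$; the cleanest fix (and what the paper implicitly uses) is to note from Remark~\ref{rem: G-datum}(i) that $\rho|G^0_x$ is already irreducible, so in fact $\dim\rho\le [G^0_x:G^0_{x,0^+}]$.
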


\begin{proof}
Assertion (i) is easily deduced from the defining equality for $\deg(\pi_\Sigma)$:
\[
\deg(\pi_\Sigma)\int_{G/Z_G}\Theta_{\rho_\Sigma}(g)\ol{\Theta_{\rho_\Sigma}(g)} dg = \dim \rho_\Sigma,
\]
cf. \cite[Thm A.14]{BH96}.

(ii) Let $\rho$ and $\kappa$ be as in \eqref{eq: construction-sc}. One sees from the construction of supercuspidal representations that
$\dim(\rho)\le[G^0_{\xo}:G^0_{\xo,0^+}]$, and the dimension formula for finite Heisenberg representations yields
\[
\begin{aligned}
\dim(\kappa_i)=[J^{i+1}: J^{i+1}_+]^{\frac12}&=[(\lieG^{i},\lieG^{i+1})_{x,(\rtm_{i},\stm_{i})}: (\lieG^{i},\lieG^{i+1})_{x,(\rtm_{i},\stm_{i}^+)}]^{\frac12}\le[\mathbf{\mathfrak g}^{i+1}(\mathbb F_q):\mathbf{\mathfrak g}^{i}(\mathbb F_q))]^{\frac12}, \\
\dim(\kappa)&=\prod_{i=i}^{d-1}\dim(\kappa_i)\le [\mathbf{\mathfrak g}(\mathbb F_q):\mathbf{\mathfrak g^0}(\mathbb F_q))]^{\frac12}
\end{aligned}
\]
Hence,
\[1\le\dim(\rho_\Sigma)=\dim(\rho)\dim(\kappa)\le
[G^0_{\xo}:G^0_{\xo,0^+}][\mathbf{\mathfrak g}(\mathbb F_q):\mathbf{\mathfrak g^0}(\mathbb F_q))]^{\frac12}\le q^{\dim(\lieG)}\]

\end{proof}

There is no exact formula yet known for the formal degree $\deg(\pi_\Sigma)$ of tame supercuspidals, or equivalently for $\vol_{G/Z_G}(J_\Sigma/Z_G)$ and $\dim(\rho_\Sigma)$, which is also an indication of the difficulty in computing the trace character $\Theta_{\pi_\Sigma}(\gamma)$ in this generality since $\deg(\pi_\Sigma)$ appears as the first term in the local character expansion. In this direction a well-known conjecture of Hiraga-Ichino-Ikeda~\cite{Hiraga-Ichino-Ikeda} expresses $\deg(\pi_\Sigma)$ in terms of the Langlands parameter conjecturally attached to $\pi_\Sigma$.

For our purpose it is sufficient to know that
$\frac1{r_\Sigma} \log_q  \vol_{G/Z_G}(J_\Sigma/Z_G)$ is bounded above and below as $\Sigma$ varies by constants depending only on $G$, which follows from Lemma~\ref{l:bound-on-d}.
Below we shall use similarly that $\frac 1{r_{\Sigma}}\log_q \vol_{G/ Z_G}(L_{s_\Sigma})$ is bounded above and below, where $L_{s_\Sigma}:=G^{d-1}_{[x]}G_{s_\Sigma}$. Note that $J_\Sigma\subset L_{s_\Sigma}$.

\section{Preliminary lemmas on Moy-Prasad subgroups}
In this subsection, we prove technical lemmas that we need to prove the main theorem.
We keep the notation from the previous section.

\subsection{Lemmas on $\pi_\Sigma$ and $\gamma$}

Recall from \cite{Asym1} that when (HB) and (HGT) are valid, any irreducible smooth representation $(\tau,V_\tau)$ contains a good minimal $K$-type. The following lemma analyzes other possible minimal $K$-types occurring in $(\tau,V_\tau)$.

\begin{lem}\label{lem: good type} Suppose (HB) and (HGT) are valid.
Let $(V_\tau,\tau)$ be an irreducible smooth representation of $G$ of positive depth $\dpi$. Let $(\chi,G_{x,\dpi})$ be a good minimal $K$-type of $\tau$ represented by $\Xs+\lieG_{x,(-\dpi)^+}$ where $\Xs\in \lieG_{x,(-\dpi)}$ is a good element of depth $(-\dpi)$. Let $G'$ be the connected component of the centralizer of $\Xs$ in $G$. \begin{enumerate}
\item[(1)] Fix an embedding $\Bd(G')\hookrightarrow\Bd(G)$ (such an embedding can be chosen by \cite[Thm 2.2.1]{Lan00}) and let $C'_{x}$ be a facet of maximal dimension in $\Bd(G')$ containing $x$ in its closure $\overline C'_x$.
 There exists a facet of maximal dimension $C_{x}$ in $\Bd(G)$ such that $x\in\overline C_{x}\cap \overline C'_{x}$ and $C'_{x}\cap \overline C_{x}$ is of maximal dimension in $\Bd(G\rq)$.
\item[(2)] Let $y\in \Bd(G)$ and suppose that $V_{\tau}^{G_{y,\dpi^+}}\neq 0$.
As a representation of $G_{y,\dpi}$,  $V_\tau^{G_{y,\dpi^+}}$  is a sum of characters $\chi'$'s which are represented by $^h(\Xs+\eta')+\lieG_{y,(-\dpi)^+}\subset \lieG_{y,(-\dpi)}$ for some $\eta'\in \lieG'_{(-\dpi)^+}$ and $h\in G_{[y]}S_{\xo}$ for some compact mod center set $S_{\xo}$. Moreover, one can choose $S_{\xo}$ in a way depending only on $x, G\rq{}$.
\end{enumerate}
\end{lem}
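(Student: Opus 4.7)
The plan for (i) is to realize both facets in a common apartment. I would pick a maximal $k$-torus $\bT'\subset\bG'$ that splits over a tame extension $E/k$ and is maximal in $\bG$ over $E$; since $\bG'$ is a tame twisted Levi, $\Apt(\bT',\bG',E)$ embeds into $\Apt(\bT',\bG,E)$ as a $\Gal(E/k)$-stable affine subspace, and the $E$-affine root hyperplanes of $\bG$ restrict to a refinement of those of $\bG'$ on it. Descending to $k$-rational facets via \cite{Rou77,Pra01}, the chamber $C'_{\xo}$ of $\bG'$ is partitioned by $\bG$-walls into finitely many relatively open pieces, each contained in a chamber of $\bG$. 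Choose $C_{\xo}$ to be any chamber of $\bG$ whose interior meets $C'_{\xo}$ in a neighborhood of $\xo$ and satisfies $\xo\in\overline{C_{\xo}}$; such a $C_{\xo}$ exists since $\xo$ lies in the closure of $C'_{\xo}$ and the finite partition accumulates near $\xo$. Then $C'_{\xo}\cap\overline{C_{\xo}}$ contains a relatively open subset of $C'_{\xo}$, hence has maximal dimension in $\Bd(G')$.

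For (ii), decompose $V_\tau^{G_{y,\dpi^+}}$ as a direct sum of $\chi'$-isotypic components indexed by characters $\chi'$ of the abelian quotient $G_{y,\dpi}/G_{y,\dpi^+}$. Via the Moy--Prasad isomorphism and hypothesis (HB), each $\chi'$ corresponds to a coset $Y+\lieG_{y,(-\dpi)^+}$ in $\lieG/\lieG_{y,(-\dpi)^+}$. Since both $\chi$ and $\chi'$ occur in the irreducible $\tau$, Frobenius reciprocity yields $g\in G$ with
\[
\Hom_{{}^g G_{\xo,\dpi}\cap G_{y,\dpi}}({}^g\chi,\chi')\neq 0.
\]
Applying the intertwining theorem for good minimal $\bK$-types of \cite{AR00} (see also \cite{Asym1}), which leverages (HGT) and the fact that the connected centralizer of $\Xs$ is exactly $G'$, one obtains $h\in G$ and $\eta'\in\lieG'_{(-\dpi)^+}$ with $Y\equiv{}^h(\Xs+\eta')\pmod{\lieG_{y,(-\dpi)^+}}$, which is the desired form.

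It remains to constrain $h$ to $G_{[y]}S_{\xo}$ with $S_{\xo}$ depending only on $\xo$ and $G'$. Since ${}^h(\Xs+\eta')\in\lieG_{y,-\dpi}$ and $\Xs+\eta'$ is good of depth $-\dpi$ with connected centralizer $G'$, one has $y\in\Bd(hG'h^{-1})$, equivalently $h^{-1}y\in\Bd(G')$. The freedom in $h$ is twofold: replacing $h$ by $g_0h$ with $g_0\in G_{[y]}$ preserves the condition (as $G_{[y]}$ fixes $y$ up to translation by $\bZ_{\bG}\subset\bZ_{\bG'}$), and replacing $h$ by $hg'$ with $g'\in G'$ leaves ${}^h(\Xs+\eta')$ unchanged after the substitution $\eta'\mapsto{}^{g'}\!\eta'\in\lieG'_{(-\dpi)^+}$ (using ${}^{g'}\!\Xs=\Xs$). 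Fix once and for all a fundamental domain $D$ for the $G'$-action on $\Bd(G')$ containing $\xo$ and compact modulo $\bZ_{\bG'}$; normalizing $h$ so that $h^{-1}y\in D$ confines $h$ to a set $S_{\xo}$ depending only on $D$, hence only on $\xo$ and $G'$, and not on $y$ or $\tau$. The main obstacle is orchestrating this normalization simultaneously with the repackaging of the filtration adjustments from the intertwining theorem into the single element $\eta'\in\lieG'_{(-\dpi)^+}$; this is precisely where (HB), (HGT) and the good-element formalism of \cite{AR00,Asym1} are indispensable.
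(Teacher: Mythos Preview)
Your overall architecture for both parts matches the paper's, but the execution diverges in two places worth noting.

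For part~(1), the paper's argument is purely topological: the union $V=\bigcup_{x\in\overline C}\overline C$ over chambers $C$ of $\Bd(G)$ has $x$ in its interior $V^\circ$, so $C'_x\cap V^\circ$ is nonempty and open in $\Bd(G')$, forcing some $\overline C$ to meet $C'_x$ in a set of full dimension. Your apartment-based argument (realizing $C'_x$ in a common apartment and partitioning it by $\bG$-walls) reaches the same conclusion and is a legitimate alternative, though it requires more care with Galois descent than you indicate.

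For part~(2), your outline---decompose into characters, invoke that any two minimal $K$-types of $\tau$ are associates to produce $g$, then use the good-element structure to get $g^{-1}y\in\Bd(G')$---is exactly the paper's. The paper cites \cite[Lem.~2.3.3]{Asym1} directly rather than going through Frobenius reciprocity and an intertwining theorem, but this is cosmetic.

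The genuine gap is your construction of $S_{\xo}$. You write that ``normalizing $h$ so that $h^{-1}y\in D$ confines $h$ to a set $S_{\xo}$ depending only on $D$.'' This does not follow: the condition $h^{-1}y\in D$ cuts out a subset of $G$ that manifestly depends on $y$. What you have actually shown is that the double quotient $G_{[y]}\backslash\{h:h^{-1}y\in\Bd(G')\}/G'$ is parametrized by a compact set (namely $\Bd^{\mathrm{red}}(G')/G'$), which is independent of $y$; but this is not the same as exhibiting a compact-mod-center set $S_{\xo}\subset G$ of \emph{representatives} that works uniformly for every $y$. Choosing a fundamental domain $D$ does not by itself produce such representatives. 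The paper addresses this by an explicit construction: fix an apartment $\Apt(T)$ containing $C_{\xo}\cup C'_{\xo}$, choose for each alcove $C\subset\Apt(T)$ meeting $\overline{C'_{\xo}}$ an element $w_C\in N_G(T)$ with $w_C C_{\xo}=C$, and set $S_{\xo}=\{\delta\cdot w_C^{-1}:\delta\in G_{[C_{\xo}]}\}$. You need a comparably concrete construction; your final paragraph acknowledges an ``obstacle'' at precisely this point but does not resolve it.
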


\begin{remark}
Note that $x\in\Bd(G')$ by \cite[Theorem 2.3.1]{Asym1}.
\end{remark}

\begin{proof}
(1) Let $V:=\cup_C \overline C$ where the union runs over the set of facets of maximal dimension $C\subset\Bd(G)$ with $x\in\overline C$. Let $V^\circ$ be the interior of $V$. Then, $x\in V^\circ$ and $V^\circ$ is open in $\Bd(G)$, hence, $C\rq_{x}\cap V^\circ\neq\emptyset$ and is open in $\Bd(G\rq)$. Therefore, at least one of $\overline C\cap C_x\rq$ with $x\in\overline C$ contains an open set in $\Bd(G\rq)$. Set $C_x$ to be one of such facets.

(2) Since the action of $G_{y,\dpi}$ on $V_\tau^{G_{y,\dpi^+}}$ factors through the finite abelian quotient $G_{y,\dpi}/G_{y,\dpi^+}$, we see that $V_\tau^{G_{y,\dpi^+}}$ decomposes as a direct sum of characters of $G_{y,\dpi}$. Let $\chi'$ be a $G_{y,\dpi}$ subrepresentation of $V_\tau^{G_{y,\dpi^+}}$. Then, $(\chi',G_{y,\dpi})$ is also a minimal $K$-type of $\tau$.
Let $\Xs'+\lieG_{y,(-\dpi)^+}\subset\lieG_{y,(-\dpi)}$ be the dual cosets representing $\chi'$. Then, $(\Xs+\lieG_{x,(-\dpi)^+})\cap \,^G\!(\Xs'+\lieG_{y,(-\dpi)^+})\ne\emptyset$. Since $(\Xs+\lieG_{x,(-\dpi)^+})=\,^{G_{x,0^+}}(\Xs+\lieG'_{x,(-\dpi)^+})$, there are $\eta\in\lieG'_{x,(-\dpi)^+}$ and $g\in G$ such that $\Xs+\eta\in \,^{g^{-1}}(\Xs'+\lieG_{y,(-\dpi)^+})\subset \lieG_{g^{-1}y,-\dpi}$. By \cite[Lem 2.3.3]{Asym1}, $g^{-1}y\in\Bd(G')$.

To choose $S_{\xo}$, let $\Apt(T)$ be  an apartment in $\Bd(G)$ such that $C_{x}\cup C'_{x}\subset \Apt(T)$. For each alcove $C\subset\Apt(T)$ with $\overline C\cap \overline C'_{x}\neq\emptyset$, choose $w_C\in N_G(T)$ such that $ C=w_C C_{x}$. Now, set
\[S_{\xo}:=\{\delta\cdot w_C^{-1}\mid C\textrm{ is an alcove in }\Bd(G)\textrm{ with }\overline C\cap \overline C'_{x}\neq\emptyset, \ \delta\in G_{[C_{x}]}\}.\]
We claim that there is $g'\in G'$ such that $gg'\in G_{[y]} S_{\xo}$. Let $g'\in G'$ such that $(gg')^{-1}y\in\overline C'_{x}$. Then, there is $g''{}^{-1}\in S_{\xo}$ such that $g''y=(gg')^{-1}y$. Hence, $gg'g''\in G_{[y]}$ and $gg'\in G_{[y]}S_{\xo}$. Then one can take $h=gg'$ and $\eta'=g'^{-1}\eta g'$ since ${}^h(\Xs+\eta')\equiv{}^g ({}^{g'}\Xs+\eta)\equiv{}^g (\Xs+\eta) \equiv\Xs' \pmod{\fkg_{y,(-\dpi)^+}}$.
By construction $S_{\xo}$ depends only on $x$ and $G\rq{}$.
\end{proof}

\begin{defn}\label{d:singular-depth}
Let $\gamma\in G_\reg$. Let $\bT^\gamma$ be the unique maximal torus containing $\gamma$, and $\Phi:=\Phi(\bT^\gamma)$ the set of absolute roots of $\bT^\gamma$. Let $\Phi^+:=\Phi^+(\bT^\gamma)$ be the set of positive roots.
\begin{enumerate}
\item[(i)]
Define the \emph{singular depth} $\sd_\alpha(\gamma)$ of $\gamma$ in the direction of $\alpha\in\Phi$ as
\[
\sd_\alpha(\gamma):=\nu(\alpha(\gamma)-1).
\]
and the \emph{singular depth} $\sd(\gamma)$ of $\gamma$ as
\[
\sd(\gamma):=\max_{\alpha\in\Phi}\sd_\alpha(\gamma).
\]
When $\gamma$ is not regular, see \cite[\S4]{AK07} for definition. In \cite[\S4.2]{Meyer-Solleveld:growth}, $\sd(\gamma)$ is defined as $\max_{\alpha\in\Phi^+}\sd_\alpha(\gamma)$. When $\gamma$ is compact, both definitions coincide.

\smallskip

\item[(ii)]
Recall that the height of $\alpha\in\Phi^+(\bT^\gamma)$ is defined inductively as follows:

\smallskip

 $\bullet$ $\hg(\alpha)=1$ if $\alpha\in\Phi^+$ is simple; \\
\indent $\bullet$ $\hg(\alpha+\beta)=\hg(\alpha)+\hg(\beta)$ if $\alpha,\beta,\alpha+\beta\in\Phi^+$.

 \smallskip

\noindent
Define the \emph{height $h_G$ of $\Phi$} as $\max_{\alpha\in\Phi^+}\hg(\alpha)$. Note that the height of $\Phi$ depends only on $G$.

\end{enumerate}

\end{defn}

\begin{lem}\label{lem: centralizer1}
Suppose $\gamma\in G_{\rs}\cap T^\gamma_0$ splits over a tamely ramified extension.
Suppose $z\in\Apt(T^\gamma)$, and $^g\!\gamma\in G_{z}$ for $g\in G$.
Then, $^g\!T^\gamma_{h_\gamma^+}\subset G_{z}$, where $h_\gamma:=h_G\cdot \sd(\gamma)$.
\end{lem}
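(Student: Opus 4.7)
The hypothesis $\,^g\gamma\in G_z$ translates to $\gamma\in g^{-1}G_zg=G_y$ with $y:=g^{-1}z$, and the target inclusion $\,^g T^\gamma_{h_\gamma^+}\subset G_z$ is equivalent, after conjugating by $g$, to $T^\gamma_{h_\gamma^+}\subset G_y$. Since $\gamma\in T^\gamma_0$ automatically fixes $z\in\Apt(T^\gamma)$, the problem reduces to the intrinsic claim: if $\gamma\in T^\gamma_0\cap G_y$ is regular semisimple, then $T^\gamma_{h_\gamma^+}\subset G_y$.

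To prove this, I would choose an apartment $A'$ of $\Bd(G)$ containing both $y$ and a point $x_0\in\Apt(T^\gamma)$; such an $A'$ exists by Bruhat--Tits theory. Using an Iwasawa-type decomposition $G=U^-G_{x_0}$ for a Borel containing $T^\gamma$, I would write $y=u\cdot x_0$ and decompose $u=\prod_{\alpha\in\Phi^-}u_\alpha(\xi_\alpha)$ in a fixed descending-height ordering, where $\nu(\xi_\alpha)=-m_\alpha$ records the Moy--Prasad depth of each factor at $x_0$. For any $t\in T^\gamma$ one has $tu_\alpha(\xi)t^{-1}=u_\alpha(\alpha(t)\xi)$, and combined with the Chevalley commutator relations $[U_\alpha,U_\beta]\subset U_{\alpha+\beta}\cdots$, the $\alpha$-component of $u^{-1}tu$ has valuation at least $\sd_\alpha(t)-m_\alpha$, up to corrections coming from lower-height root subgroups. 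Applied to $t=\gamma$, the condition $u^{-1}\gamma u\in G_{x_0}$ (equivalent to $\gamma\in G_y$) forces each $\alpha$-component to be integral, and by induction on the height $\hg(\alpha)$ this yields $m_\alpha\le\hg(\alpha)\cdot\sd(\gamma)\le h_G\cdot\sd(\gamma)=h_\gamma$ for every $\alpha\in\Phi^-$.

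With the bound $m_\alpha\le h_\gamma$ in hand, for any $t\in T^\gamma_{h_\gamma^+}$ we have $\sd_\alpha(t)>h_\gamma\ge m_\alpha$ for all $\alpha\in\Phi^-$, so the same commutator computation gives $u^{-1}tu\in G_{x_0}$, i.e.\ $t\cdot y=y$ and $t\in G_y$, as desired. The principal obstacle is the inductive bound in the middle step: tracking how Moy--Prasad depths propagate through iterated Chevalley commutators of heights up to $h_G$ and verifying that the bound is independent of the chosen ordering in the factorization of $u$. This is precisely where the factor $h_G$ in the definition $h_\gamma:=h_G\cdot\sd(\gamma)$ originates.
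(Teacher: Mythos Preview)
Your approach is correct in substance and amounts to a direct reconstruction of the result the paper merely cites. The paper's own proof is a one-line appeal to \cite[Lemma~4.3]{Meyer-Solleveld:growth}, which asserts that the building fixed-point set $\Bd(G)^\gamma$ coincides with $\Bd(G)^{\gamma T^\gamma_{h_\gamma^+}}$; applying this to $^g\gamma$ (whose centralizing torus is $^gT^\gamma$, with the same singular depth) gives the conclusion immediately. Your height-induction on root components is precisely the mechanism behind that cited lemma, and in particular the bound $m_\alpha\le\hg(\alpha)\cdot\sd(\gamma)$ you aim for is exactly the content of \cite[Prop.~4.2]{Meyer-Solleveld:growth}, which the paper itself invokes in step~(4) of the proof of Lemma~\ref{lem: centralizer2}.

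Two technical points need adjustment in your sketch. First, the Iwasawa step is misordered: $G=U^-G_{x_0}$ requires $x_0$ special, and in any case does not produce $y=u\cdot x_0$ for a \emph{pre-chosen} $x_0$, since $y$ need not lie in the $G$-orbit of $x_0$. The correct building-theoretic statement is that for any $y\in\Bd(G)$ there exist $u\in U^-$ and \emph{some} $x_0\in\Apt(T^\gamma)$ with $y=u\cdot x_0$, coming from the transitivity of $U^-$ on apartments containing a fixed sector-germ in $\Apt(T^\gamma)$. Second, to have the root subgroups $U_\alpha$ and Chevalley commutator relations at your disposal you must pass to a splitting field $E$ of $T^\gamma$, carry out the induction in $\bG(E)$, and descend via $\bG(E)_{x,r}\cap G=G_{x,r}$; this is where the tameness hypothesis on $\gamma$ is used, and your sketch does not mention it. With these two fixes your argument goes through and recovers what the paper cites.
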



\begin{proof}
This is a reformulation of \cite[Lemma 4.3]{Meyer-Solleveld:growth}. More precisely, $^g\!\gamma\in G_{z}$ is equivalent to $z\in \Bd(G)^{^g\!\gamma}$, hence $z\in \Bd(G)^{^g\!(\gamma T_{h_\gamma^+})}$ by {\it loc. cit.} , which in turn implies that $^g\!(\gamma T_{h_\gamma^+})\subset G_{z}$ and $^u\!T_{h_\gamma^+}\subset G_{z}$.
\end{proof}

\begin{lem}\label{lem: centralizer2}
Suppose $\gamma\in G_{\rs}\cap G_0$ splits over a tamely ramified extension $E$.
Let $\bT^\gamma \subset \bG^0\subsetneq \bG^1\subsetneq\cdots\subsetneq \bG^d=\bG$ be an $E$-twisted tamely ramified Levi sequence, $z\in\Apt(\bT^\gamma,E)\subset \Bd(\bG^0,E)$ and $a_i\in \bbR$ with $0\le a_1\le \cdots\le a_d$. Set $K_E=\bG^0(E)_{z}\bG^1(E)_{z,a_1}\bG^2(E)_{z,a_2}\cdots \bG^d(E)_{z,a_d}$. Suppose $g\in G$ such that $^g\gamma\in K_E$. Then, we have $^g\!\bT^\gamma(E)_{\aconst^+}\subset K_E$ where $\aconst=h_\gamma+a_d$ and $h_\gamma$ is as in Lemma \ref{lem: centralizer1}.
\end{lem}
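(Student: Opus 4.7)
The plan is induction on $d$. The base case $d=0$ is exactly Lemma~\ref{lem: centralizer1} applied to $\bG=\bG^0$ over $E$, since then $K_E=\bG(E)_z$ and $\aconst=h_\gamma$.

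For the inductive step, I would first establish a depth-$a$ sharpening of Lemma~\ref{lem: centralizer1}: if $\,^g\gamma\in\bG(E)_{y,a}$ for some $y\in\Bd(\bG,E)$ and $a\ge 0$, then $\,^g\bT^\gamma(E)_{(h_\gamma+a)^+}\subset\bG(E)_{y,a}$. This follows by the same building-displacement argument as in \cite[Lemma~4.3]{Meyer-Solleveld:growth}, now carried out with depth tracking: elements of $\bT^\gamma(E)$ of depth $>h_\gamma+a$ act on $\Bd(\bG,E)$ with displacement strictly less than $a$ at any point (the factor $h_\gamma$ absorbs the discrepancy that arises because $y$ may lie outside $\Apt(\bT^\gamma,E)$), hence $\gamma t\in\bG(E)_{y,a}$ for all such $t$.

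Given this tool, decompose $K_E=K'_E\cdot\bG^d(E)_{z,a_d}$, where $K'_E:=\bG^0(E)_z\bG^1(E)_{z,a_1}\cdots\bG^{d-1}(E)_{z,a_{d-1}}$, and write $\,^g\gamma=k'\cdot k_d$ with $k'\in K'_E$ and $k_d\in\bG^d(E)_{z,a_d}$. The plan is to find $h\in\bG(E)_{z,a_d}$ that absorbs the outer layer so that $\,^{gh}\gamma\in K'_E$; applying the inductive hypothesis then yields $\,^{gh}\bT^\gamma(E)_{(h_\gamma+a_{d-1})^+}\subset K'_E$, while the sharpened form of Lemma~\ref{lem: centralizer1} applied directly to $\bG$ at depth $a_d$ controls the part landing in $\bG^d(E)_{z,a_d}$. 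Combining the two gives the required containment $\,^g\bT^\gamma(E)_{\aconst^+}\subset K_E$ with $\aconst=h_\gamma+a_d$.

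The main obstacle is carrying out the absorption $\,^g\gamma\mapsto\,^{gh}\gamma$ cleanly, since the factors of $K_E$ do not split the Moy-Prasad filtration of $\bG(E)_z$ compatibly and the decomposition $\,^g\gamma=k'k_d$ is far from unique. A more direct alternative is to decompose $\,^g t$ for $t\in\bT^\gamma(E)_{\aconst^+}$ along the root space decomposition with respect to the $E$-split torus $\bT^\gamma$, and to verify root by root that each component has enough depth to lie in the corresponding factor $\bG^{i(\alpha)}(E)_{z,a_{i(\alpha)}}$ of $K_E$, where $i(\alpha)$ is the smallest $i$ with $\alpha$ appearing in the root system of $\bT^\gamma$ in $\bG^i$. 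The depth bound $\aconst=h_\gamma+a_d$ is designed so that this succeeds in the worst case, parallel to the role played by $h_\gamma$ in the proof of Lemma~\ref{lem: centralizer1}.
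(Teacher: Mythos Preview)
Your proposal does not match the paper's argument, and more importantly both of the routes you sketch have genuine gaps.

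First, the ``depth-$a$ sharpening'' of Lemma~\ref{lem: centralizer1} is not justified by the displacement heuristic you give. Elements of $\bT^\gamma(E)_r$ for $r>0$ lie in the bounded part of the torus and fix $\Apt(\bT^\gamma,E)$ pointwise, so speaking of their ``displacement being strictly less than $a$'' does not make sense; the Meyer--Solleveld argument you cite is about fixed-point sets in the building, and promoting it to a statement about Moy--Prasad depths at a point $y$ (which in your formulation is not even assumed to lie in the apartment) requires genuinely new input. Second, even granting the sharpening, the inductive absorption step does not go through: writing $\,^g\gamma=k'k_d$ in no way produces an $h\in\bG^d(E)_{z,a_d}$ with $\,^{gh}\gamma\in K'_E$, since conjugation by $h$ moves both factors and $K'_E$ is not normal in $K_E$. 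You flag this yourself but do not resolve it. The root-by-root alternative has the same underlying problem: $\,^g t$ lies in the conjugate torus $\,^g\bT^\gamma$, whose root spaces bear no controlled relation to the filtration at $z$, so there is no meaningful ``root-by-root'' decomposition of $\,^g t$ relative to the factors $\bG^i(E)_{z,a_i}$.

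The paper does not use induction on $d$ at all. Instead it writes $g\in G_C w G_C$ via the Bruhat decomposition for an Iwahori $G_C$ with $z\in\overline C$, chooses a Borel $B_D=T^\gamma U$ adapted to $w$, and uses the Iwahori decomposition to write $gw^{-1}=t\overline u\, u$ with $\overline u\in G_C\cap\overline U$ and $u\in U$. The key quantitative control comes from \cite[Proposition~4.2]{Meyer-Solleveld:growth} (not Lemma~4.3), which forces $u\in G_{z+\sd(\gamma)\hg_D}\cap U$. One then computes the commutators $(u,\,^w\gamma)$ and $(u,\,^w(\gamma\gamma'))$ explicitly and shows they agree modulo $G_{z,s}\cap U$ whenever $\gamma'\in T^\gamma_{A^+}$; together with an easy bound on the $\overline u$-side, this yields $g(\gamma\gamma')g^{-1}\equiv g\gamma g^{-1}\pmod{K_E}$, hence $\,^gT^\gamma_{A^+}\subset K_E$. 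The essential idea you are missing is this reduction to a single unipotent element $u$ via Bruhat/Iwahori, followed by commutator analysis.
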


\proof
Without loss of generality, we may assume $E=k$ and we will for simplicity of notation.
 Let $O\in\Apt(T^\gamma)$ defined by $\alpha(O)=0$ for all $\alpha\in \Phi$. For $\alpha\in\Phi$, let $U_\alpha$ be the root subgroup associated to $\alpha$. We fix the pinning $x_\alpha: k\rightarrow U_\alpha$. Define $U_{\alpha,r}$ to be the image $\{u\in k\mid \val(u)\ge r\}$ under the isomorphism $x_\alpha: k\rightarrow U_\alpha$.  Let $C\subset \Apt(T^\gamma)$ be the facet of maximal dimension  with $O\in\overline C$. One may assume that $z\in \overline C$ (by conjugation by an element of $N_G(T^\gamma)$ if necessary).
Then, $G_C$ is an Iwahori subgroup and we have the Bruhat decomposition $G=G_CN_{G}(T^\gamma)G_C$, cf. \cite[3.3.1]{Tit79}. Let $w\in N_{G}(T^\gamma)$ with $g\in G_C w G_C$.
Let $A_w=\{\alpha\in\Phi(T^\gamma)\mid U_{w\alpha}\cap G_C\subsetneq w(U_\alpha\cap G_C)\}$. Write $U_w=\prod_{\alpha\in A_w} U_{\alpha}$. Note that $U_w$ may not necessarily be a group.
We prove the lemma through steps (1)-(8) below.

\medskip

(1) Let $w\in N_G(T^\gamma)$. Then, there is a Borel subgroup $B$ containing $wU_ww^{-1}$.

\begin{proof}
Each chamber $D$ containing $O$ in its closure defines an open cone $\mathcal C_D$ in $\Apt(T^\gamma)$ and we have $\Apt(T^\gamma)=\cup _{O\in \overline D}\overline{\mathcal C}_D$ is the union is over the chambers $D$ with $O\in\overline D$. Recall that each $\mathcal C_D$ defines a Borel subgroup $B_D$. If $\mathcal C_D$ contains $wC$, one can take $B=B_D$.
\end{proof}

\medskip

For $D$ as in (1), let $\Phi^+_{D}$ be the set of positive $T^\gamma$-roots associated to $\mathcal C_D$ and write $\hg_D=\hg_{\Phi^+_D}$ for simplicity.

\medskip

(2) Write $B_D=T^\gamma U$ and  let $\overline U$ be the opposite unipotent subgroup. Then, $gw^{-1}=t\cdot \overline u\cdot u$ for $t\in T^\gamma_0$, $\overline u\in G_C\cap\overline U$ and $u\in U$.

\begin{proof}
We have that $G_C$ has an Iwahori decomposition with respect to $B$: $G_C=(G_C\cap T^\gamma)(G_C\cap \overline U)(G_C\cap U)$.  Then, the above follows from $gw^{-1}\in G_C w G_C w^{-1}\subset G_C w U_w w^{-1}\subset (G_C\cap T^\gamma)(G_C\cap \overline U)(G_C\cap U) U\subset (G_C\cap T^\gamma)(G_C\cap \overline U)U$.
\end{proof}

(3) Since $t\in T^\gamma_0\subset K_E$, we may assume without loss of generality that $t=1$ or $gw^{-1}=\overline u u$ .

\medskip

(4) We have $u\in G_{x+\sd(\gamma)\hg_{D}}\cap U$.

\begin{proof}
Observe that $^w\!\gamma\in K_E$ and $\sd({}^w\!\gamma)=\sd(\gamma)$.
Observe also that $^u\!({}^w\gamma)\in G_C\subset G_z$ since $\overline u\in G_z$ and $K_E\subset G_z$.
Since $^u\!({}^w\gamma)\in G_C\subset G_z$ we can apply~\cite[Proposition 4.2]{Meyer-Solleveld:growth} to deduce that $u\in G_{z+\sd(\gamma)\hg_{D}}\cap U$.
\end{proof}

\medskip

(5) For $\gamma'\in T^\gamma_{A^+}$, we have (\emph{i}) $(u,{}^w\!(\gamma\gamma'))\equiv (u,{}^w\!\gamma)\pmod{G_{z+\sd(\gamma)\hg_D,A^+}\cap U}$, and
(\emph{ii})  $(u,{}^w\!(\gamma\gamma'))\equiv (u,{}^w\!\gamma)\pmod{G_{z,s}\cap U}$.

\begin{proof}
For $\gamma'\in T^\gamma_{A^+}\subset G_{z+\sd(\gamma)\hg_D,A^+}$, the commutators $(u,{}^w\!(\gamma\gamma'))$ and $(u,{}^w\!\gamma)$ are in $G_{z+\sd(\gamma)\hg_{D}}\cap U$ and also in the same coset mod ${G_{z+\sd(\gamma)\hg_D,A^+}\cap U}$. Hence, ({\it i}) follows.

The assertion ({\it ii}) follows from ({\it i}). Indeed, we note that
\[
G_{z+\sd(\gamma)\hg_D,A^+}\cap U=\prod\limits_{\alpha\in \Phi^+_D} U_{\alpha,-\alpha(z)-\sd(\gamma)\hg_D(\alpha)+A^+}
\]
is contained in $\prod\limits_{\alpha\in \Phi^+_D} U_{\alpha,-\alpha(z)+s^+}$ which is itself contained in $G_{z,s}\cap U$.
\end{proof}


\medskip

(6) For any $\gamma'\in T^\gamma_{s^+}$, we have $^w\!\gamma\overline u^{-1}{}^w\gamma^{-1}\equiv ({}^w\!(\gamma\gamma'))\overline u^{-1}({}^w\!(\gamma\gamma'))^{-1}\pmod{G_{z,s}}$.

\medskip

(7) For any $\gamma'\in T^\gamma_{A^+}$, we have $g\gamma g^{-1}{}^w\gamma^{-1}\equiv g(\gamma\gamma') g^{-1}{{}^w\!(\gamma\gamma')}^{-1}\pmod{G_{z,s}}$

\begin{proof}
Write
$g\gamma g^{-1}{}^w\gamma^{-1}=\overline u (u,{}^w\gamma){}^w\gamma\overline u^{-1}{}^w\gamma^{-1}$ and
\[
g(\gamma\gamma') g^{-1}{}^w\!(\gamma\gamma')^{-1}=\overline u (u,{}^w\!(\gamma\gamma')){}^w\!(\gamma\gamma')\overline u^{-1}{}^w\!(\gamma\gamma')^{-1}.
\]
Then, (7) follows from (5) and (6).
\end{proof}

(8) $^gT_{A^+}^\gamma\subset K_E$.

\begin{proof}
Note that ${}^w\!\gamma, {}^w\!(\gamma\gamma')\in K_E$. Hence, $g\gamma g^{-1}\equiv g(\gamma\gamma') g^{-1}\pmod{K_E}$ by (6). Since $g\gamma g^{-1}\in K_E$, we have $g T^\gamma_{A^+} g^{-1}\subset {K_E}$.
\end{proof}

The proof of Lemma \ref{lem: centralizer2} is now complete.
\qed

\begin{lem}\label{lem: centralizer2-rank1}
In the same situation as in Lemma \ref{lem: centralizer2}, suppose in addition that $h_G=1$. Then, we have $^g\!\bT^\gamma(E)_{\sd(\gamma)^+}\subset K_E$.
\end{lem}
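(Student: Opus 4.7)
The plan is to mimic the proof of Lemma~\ref{lem: centralizer2} but to sharpen the commutator estimates root-by-root, exploiting the fact that $h_G=1$ forces every positive root of $\Phi(\bT^\gamma,\bG)$ to be simple. Two consequences will be essential: $h_\gamma=\sd(\gamma)$, and the unipotent radical $U$ (and its opposite $\bar U$) of any Borel subgroup containing $\bT^\gamma$ is \emph{abelian}, with direct product decomposition $U=\prod_{\alpha\in\Phi^+_D}U_\alpha$ in any order.

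Assuming $E=k$ for notational simplicity, I will first recycle the reductions (1)--(3) of the proof of Lemma~\ref{lem: centralizer2}: the Iwahori Bruhat decomposition of $G$ adapted to an alcove $C$ with $z\in\overline{C}$ lets us write $g=\bar u\, u\, w$ with $\bar u\in G_C\cap\bar U$, $u\in U$ and $w\in N_G(\bT^\gamma)$ (after absorbing a $\bT^\gamma$-factor into $K_E$). Since $\bar u\in G_C\subset K_E$ and ${}^w\gamma\in T^\gamma_0\subset K_E$, the hypothesis ${}^g\gamma\in K_E$ rewrites as $(u,{}^w\gamma)\in K_E\cap U$. I will then invoke the Iwahori-type decomposition
\[
K_E\cap U \;=\; \prod_{\alpha\in\Phi^+_D} U_{\alpha,\,-\alpha(z)+a_{i(\alpha)}},
\]
where $i(\alpha):=\min\{\,i:\alpha\in\Phi(\bG^i,\bT^\gamma)\,\}$ and $a_0:=0$; this is standard for the tame twisted Levi tower used in Yu's construction.

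Writing $u=\prod_\alpha x_\alpha(c_\alpha)$ (well defined thanks to abelianness of $U$), a direct commutator computation in the root group gives $(u,{}^w\gamma)=\prod_{\alpha} x_\alpha\!\bigl(c_\alpha(1-\alpha({}^w\gamma))\bigr)$, so matching the above display root-by-root yields the sharp estimate
\[
\nu(c_\alpha)\ \ge\ -\alpha(z)+a_{i(\alpha)}-\sd_{w^{-1}\alpha}(\gamma).
\]
For the main step, let $\gamma'\in\bT^\gamma(E)_{\sd(\gamma)^+}$. I will write $g\gamma'g^{-1}=\bar u\cdot (u,{}^w\gamma')\cdot({}^w\gamma')\cdot\bar u^{-1}$ and observe that $\bar u$ and ${}^w\gamma'$ are in $K_E$ (the latter because $\bT^\gamma(E)_{\sd(\gamma)^+}\subset \bG^0(E)_z\subset K_E$), so it suffices to check $(u,{}^w\gamma')\in K_E$. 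Abelianness once more gives $(u,{}^w\gamma')=\prod_\alpha x_\alpha\!\bigl(c_\alpha(1-\alpha({}^w\gamma'))\bigr)$, and combining the bound on $\nu(c_\alpha)$ above with $\nu(1-\alpha({}^w\gamma'))>\sd(\gamma)\ge\sd_{w^{-1}\alpha}(\gamma)$ produces $\nu(c_\alpha(1-\alpha({}^w\gamma')))>-\alpha(z)+a_{i(\alpha)}$, placing each factor in $U_\alpha\cap K_E$ and hence the product in $K_E$.

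The step requiring the most care will be verifying the root-space decomposition of $K_E\cap U$, which plays the role of \cite[Prop.\ 4.2]{Meyer-Solleveld:growth} used in Lemma~\ref{lem: centralizer2}; it should follow from the standard Moy--Prasad theory for the tame twisted Levi tower $\bG^0\subset\cdots\subset\bG^d$ at $z\in\Apt(\bT^\gamma,E)$, but needs to be spelled out carefully so that every positive root space intersects $K_E$ in exactly the filtration piece claimed.
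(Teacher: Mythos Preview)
There is a genuine gap: your claim ``$\bar u\in G_C\subset K_E$'' is false. The Iwahori subgroup $G_C$ is contained in $G_z$, but $K_E=\bG^0(E)_z\bG^1(E)_{z,a_1}\cdots\bG^d(E)_{z,a_d}$ is strictly smaller than $G_z$ once some $a_i>0$. Concretely, in the key case $d=1$ with $\bG^0=\bT^\gamma$ one has $K_E=T^\gamma_0 G_{z,a_1}$, and a generic element $\bar u=x_{-\alpha}(c)\in G_C\cap\bar U$ satisfies only $\nu(c)\ge \alpha(z)$, not $\nu(c)\ge \alpha(z)+a_1$, so $\bar u\notin K_E$. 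This single error propagates twice: first, you cannot rewrite ${}^g\gamma\in K_E$ as $(u,{}^w\gamma)\in K_E\cap U$, so your sharp root-by-root bound $\nu(c_\alpha)\ge -\alpha(z)+a_{i(\alpha)}-\sd_{w^{-1}\alpha}(\gamma)$ is never established; second, even if you had $(u,{}^w\gamma')\in K_E$, you could not conclude $g\gamma'g^{-1}=\bar u\,(u,{}^w\gamma')\,({}^w\gamma')\,\bar u^{-1}\in K_E$ without conjugation by $\bar u$ preserving $K_E$.

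The paper's proof confronts exactly this difficulty. After reducing to $d\le 1$ and a single root pair $\{\pm\alpha\}$, it does \emph{not} separate $\bar u$ from the rest. Instead it computes $g(\gamma\gamma')g^{-1}\cdot{}^w(\gamma\gamma')^{-1}$ explicitly in the Chevalley basis, obtaining an expression of the form
\[
x_\alpha\bigl(u_{\gamma'}(1+\bar u\,u_{\gamma'})^{-1}\bigr)\,\alpha^\vee\bigl((1+\bar u\,u_{\gamma'})^{-1}\bigr)\,x_{-\alpha}(\cdots)\,x_{-\alpha}(\bar u_{\gamma'}),
\]
and then reads off the needed valuation inequalities from the known case $\gamma'=1$ (where the expression lies in $K_E$ by hypothesis). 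The point is that the $\bar u$-contribution interacts nontrivially with $u_{\gamma'}$ through the factor $(1+\bar u\,u_{\gamma'})^{-1}$; it does not simply cancel. Your abelian-$U$ argument handles the $u$-side cleanly but offers no mechanism to control this interaction with $\bar u$, which is the actual content of the rank-one lemma.
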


\proof Since $h_G=1$, we have $d=0$ or $d=1$. If $d=0$, the assertion is precisely Lemma \ref{lem: centralizer1}. We assume $d=1$ from now. Then, $K_E=T^\gamma_0G_{z,a_1}$.

As before, we may assume $E=k$. The assertions (1)-(8) below refer to those in the above proof of Lemma~\ref{lem: centralizer2}. Following the proof of Lemma \ref{lem: centralizer2}, write $\Phi_D=\{\pm\alpha\}$. Let $gw^{-1}=\overline u u$ for $u\in U_\alpha$ and $\overline u\in U_{-\alpha}$ as in (3). Under the isomorphism via $x_\alpha:k\rightarrow U_\alpha$, we will use the same notation for $u$ and $x_\alpha^{-1}(u)$. Then, we can write $x_\alpha(u)=u$. Similarly, $x_{-\alpha}(\overline u)=\overline u$. Let $\alpha^\vee :k^\times \rightarrow T^\gamma$ be the coroot of $\alpha$. It is enough to prove that $g(\gamma\gamma') g^{-1}\,^w\!(\gamma\gamma')^{-1}\in K_E$ for any $\gamma'\in T^\gamma_{\sd(\gamma)^+}$.

We have $(u,\,^w\!(\gamma\gamma'))=x_\alpha((1-\alpha(\,^w\!(\gamma\gamma')))u)$ and
$(\overline u,\,^w\!(\gamma\gamma'))=x_{-\alpha}((1-\alpha(\,^w\!(\gamma\gamma')^{-1}))\overline u)$. For simplicity, write $u_{\gamma'}=(1-\alpha(\,^w\!(\gamma\gamma')))u=x_\alpha(u_{\gamma'})$
and $\overline u_{\gamma'}=(1-\alpha(\,^w\!(\gamma\gamma')^{-1}))\overline u=x_{-\alpha}(\overline u_{\gamma'})$.
Since $x_\alpha(u_{\gamma'})\in U_\alpha\cap G_z$ by (4) and Lemma \ref{lem: centralizer1} and $x_{-\alpha}(\overline u)\in U_{-\alpha}\cap G_z$ , we have
\[
\val(u_{\gamma'})\ge -\alpha(z), \quad \val(\overline u)\ge\alpha(z).
\]
Similarly as in (7), we calculate $g(\gamma\gamma') g^{-1}\,^w\!(\gamma\gamma')^{-1}$, but, now explicitly using the Chevalley basis. Then,
\begin{align*}
&g(\gamma\gamma') g^{-1}{}^w\!(\gamma\gamma')^{-1}=\overline u (u,{}^w\!(\gamma\gamma')){}^w\!(\gamma\gamma')\overline u^{-1}{}^w\!(\gamma\gamma')^{-1}\\
&=x_{-\alpha}(\overline u)x_\alpha(u_{\gamma'})x_{-\alpha}(-\overline u)x_{-\alpha}(\overline u) \,\Ad({}^w(\gamma\gamma'))( x_{-\alpha}(-\overline u)) \\
&= x_\alpha(u_{\gamma'}(1+\overline u u_{\gamma'})^{-1})\alpha^\vee ((1+\overline u u_{\gamma'})^{-1})x_{-\alpha}(-\overline u^2 u_{\gamma'}(1+\overline u u_{\gamma'})^{-1})x_{-\alpha}(\overline u_{\gamma'})\\
\end{align*}
When $\gamma'=1$, we have
\[
g\gamma g^{-1}{}^w\!\gamma^{-1}= x_\alpha(u_{1}(1+\overline u u_{1})^{-1})\alpha^\vee ((1+\overline u u_{1})^{-1})x_{-\alpha}(-\overline u^2 u_{1}(1+\overline u u_{1})^{-1})x_{-\alpha}(\overline u_{1}).
\]
Since $g\gamma g^{-1}{}^w\!\gamma^{-1}\in K_E$, we have $\val(1+\overline u u_{-1})=0$ and $\val(u_{1})=\val(u_{1}(1+\overline u u_{1})^{-1})\ge -\alpha(z)+a_1$. Combining this with $\val(\overline u)\ge\alpha(z)$, we have $\val(\overline u^2 u_{1}(1+\overline u u_{1})^{-1})\ge\alpha(z)+a_1$.
Note that

\smallskip

({\it i}) $\val(1+\overline u u_{\gamma'})=\val(1+\overline u u_{1})$ and $\val(u_1)=\val(u_{\gamma'})$;

({\it ii}) $\val(\overline u^2 u_{1}(1+\overline u u_{1})^{-1})=\val(\overline u^2 u_{\gamma'}(1+\overline u u_{\gamma'})^{-1})\ge\alpha(z)+a_1$;

({\it iii}) $\val(\overline u_1)=\val(\overline u_{\gamma'})\ge \alpha(z)+a_1.$ The last inequality follows from $x_{-\alpha}(\overline u_{1})\in K_E$.

\smallskip

\noindent
From ({\it i})-({\it iii}), we have $g(\gamma\gamma') g^{-1}{}^w\!(\gamma\gamma')^{-1}\in K_E$ and conclude that $^g\!T^\gamma_{\sd(\gamma)^+}\subset K_E$.
\qed

\begin{prop}\label{prop: centralizer0} Recall the subgroup $J_\Sigma$ from \S\ref{sub:notation yu}. Let $\gamma\in G_{\rs}$, $g\in G$, and suppose that $\gamma\in\, H_{\Sigma,g}:=\,^g\!J_\Sigma\cap G_{\xo}$. Let
\[
\aconst_{\gamma,\Sigma}:=
\left\{\begin{array}{ll} h_G\cdot\sd(\gamma)+s_{d-1}\quad &\textrm{if }\ h_G>1,\\ \\
\sd(\gamma)&\textrm{if }\ h_G=1.\end{array}\right.
\]
Then,  we have
\begin{enumerate}
\item[(i)]
$T^\gamma_{\aconst_{\gamma,\Sigma}^+}\subset \,H_{\Sigma,g}$,
\item[(ii)]
$\sharp\left(\left.(T^\gamma\cap G_{\xo})\,H_{\Sigma,g}\right/\,H_{\Sigma,g}\right)\le  q^{\abrank_G(\aconst_{\gamma,\Sigma}+1)}$.
\end{enumerate}
\end{prop}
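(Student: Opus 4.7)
The plan is to deduce part~(i) from Lemma~\ref{lem: centralizer2} (or from its rank-one refinement Lemma~\ref{lem: centralizer2-rank1} when $h_G=1$) applied to the subgroup $J_\Sigma$, and then to deduce part~(ii) from (i) by a standard count on the Moy--Prasad filtration of the torus $T^\gamma$.

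For (i), I would first note the setup. Since $\gamma$ is regular semisimple and $\gamma\in G_{\xo}$, the point $\xo$ lies in $\Bd(G)^\gamma=\Apt(T^\gamma)$, hence in $\Apt(\bT^\gamma,E)$ over any tame splitting field $E$ of $\bT^\gamma$; also $\gamma\in G_0$ because $G_{\xo}\subset G_0$. The subgroup
\[
J_\Sigma \;=\; G^0_{[\xo]}\,G^1_{\xo,s_0}\,G^2_{\xo,s_1}\cdots G^d_{\xo,s_{d-1}}
\]
has exactly the form of the group $K_E$ of Lemma~\ref{lem: centralizer2} with $a_i=s_{i-1}$ (so $a_d=s_{d-1}$), up to the fact that the leading factor is the stabilizer $G^0_{[\xo]}$ rather than the parahoric $G^0_{\xo,0}$. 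This enlargement is harmless: the quotient $G^0_{[\xo]}/G^0_{\xo,0}$ is finite and its representatives normalize $T^\gamma$, so after multiplying $g$ on the right by a suitable such representative we may assume the leading factor is $G^0_{\xo,0}$. The hypothesis $\gamma\in{}^gJ_\Sigma$ reads ${}^{g^{-1}}\gamma\in J_\Sigma$, and Lemma~\ref{lem: centralizer2} (applied with the lemma's element $g$ being our $g^{-1}$) delivers ${}^{g^{-1}}T^\gamma_{A^+}\subset J_\Sigma$ with $A=h_\gamma+s_{d-1}=h_G\cdot\sd(\gamma)+s_{d-1}=\aconst_{\gamma,\Sigma}$ when $h_G>1$; the case $h_G=1$ is handled identically using Lemma~\ref{lem: centralizer2-rank1} and yields the sharper value $\aconst_{\gamma,\Sigma}=\sd(\gamma)$. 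Rewriting, $T^\gamma_{\aconst_{\gamma,\Sigma}^+}\subset{}^gJ_\Sigma$, and combined with the inclusion $T^\gamma_0\subset G_{\xo}$ (which holds since $\xo\in\Apt(T^\gamma)$) this establishes (i).

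For (ii), I would use the natural isomorphism
\[
(T^\gamma\cap G_{\xo})\,H_{\Sigma,g}\big/H_{\Sigma,g}\;\cong\;(T^\gamma\cap G_{\xo})\big/(T^\gamma\cap H_{\Sigma,g})
\]
and bound its order by $\#(T^\gamma_0/T^\gamma_{\aconst_{\gamma,\Sigma}^+})$, using that $T^\gamma\cap G_{\xo}=T^\gamma_0$ is the maximal bounded subgroup (since $\xo\in\Apt(T^\gamma)$) and that $T^\gamma_{\aconst_{\gamma,\Sigma}^+}\subset T^\gamma\cap H_{\Sigma,g}$ by~(i). A standard Moy--Prasad count on the torus, performed modulo the central torus $Z_G$ (which lies in every term of the denominator and reduces the effective rank to $\abrank_G$), then yields the bound $q^{\abrank_G(\aconst_{\gamma,\Sigma}+1)}$: each nontrivial graded piece $T^\gamma_r/T^\gamma_{r^+}$ of $T^\gamma/Z_G$ has cardinality at most $q^{\abrank_G}$, and there are at most $\aconst_{\gamma,\Sigma}+1$ such jumps in the interval $[0,\aconst_{\gamma,\Sigma}]$.

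The technical heart is already packaged in Lemmas~\ref{lem: centralizer2} and~\ref{lem: centralizer2-rank1}, so the proposition is largely a matter of careful bookkeeping. The one mildly delicate point to watch for is the appearance of the stabilizer $G^0_{[\xo]}$ in place of the parahoric $G^0_{\xo,0}$ in the leading factor of $J_\Sigma$; handling it cleanly requires the observation above that $G^0_{[\xo]}/G^0_{\xo,0}$ acts through normalizers of $T^\gamma$, which lets one absorb the extra elements into $g$ without altering the relevant containment. In the count for (ii) one also has to arrange the Moy--Prasad jumps so that the bound $\aconst_{\gamma,\Sigma}+1$ is genuinely achieved, which is why it is cleanest to work with $T^\gamma/Z_G$.
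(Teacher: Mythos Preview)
Your overall strategy is right --- reduce (i) to Lemma~\ref{lem: centralizer2} (resp.\ \ref{lem: centralizer2-rank1}) and deduce (ii) by counting filtration jumps --- but the verification of the hypotheses of Lemma~\ref{lem: centralizer2} is flawed in two places.

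First, the assertion $\Bd(G)^\gamma=\Apt(T^\gamma)$ is false in general. For $\gamma\in G_{0^+}$ close to the identity (say $\gamma\in T^\gamma_{r^+}$ with $r$ large) the fixed-point set $\Bd(G)^\gamma$ is a thickened neighbourhood of $\Apt(T^\gamma)$, not the apartment itself; this is precisely what Lemma~\ref{lem: centralizer1} quantifies. So from $\gamma\in G_{\xo}$ you cannot conclude $x\in\Apt(T^\gamma)$. Second, Lemma~\ref{lem: centralizer2} also requires $\bT^\gamma\subset\bG^0$, and there is no reason the centralizing torus of an arbitrary $\gamma$ sits inside the twisted Levi coming from the datum~$\Sigma$. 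Your handling of the $G^0_{[\xo]}$ versus $G^0_{\xo,0}$ discrepancy inherits this problem: representatives of $G^0_{[\xo]}/G^0_{\xo,0}$ lie in $G^0$ and have no reason whatsoever to normalize $T^\gamma$.

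The paper resolves both issues simultaneously by passing to a tame splitting field $E$. Over $E$ every maximal torus of $\bG$ is $\bG(E)$-conjugate to one inside $\bG^0$, so after an $E$-rational conjugation one may assume $\bT^\gamma\subset\bG^0$ and $x\in\Apt(\bT^\gamma,E)$; Lemma~\ref{lem: centralizer2} then applies to the $E$-group $K_E$ and gives $\bT^\gamma(E)_{\aconst^+}\subset{}^gK_E$. Intersecting with $G_{0^+}$ brings this back to $k$: one uses $T^\gamma_{\aconst^+}=\bT^\gamma(E)_{\aconst^+}\cap G_{0^+}$ and ${}^gK_E\cap G_{0^+}\subset{}^gJ_\Sigma$. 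The containment in $G_{\xo}$ is obtained separately from Lemma~\ref{lem: centralizer1}. For part~(ii) your argument is essentially the paper's, except that you should only claim $T^\gamma\cap G_{\xo}\subset T^\gamma_0$ (always true), not equality (which again would need $x\in\Apt(T^\gamma)$); the inclusion suffices for the bound.
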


\begin{proof}
For simplicity of notation, we write $\aconst$ for $\aconst_{\gamma,\Sigma}$.

(i) By Lemma \ref{lem: centralizer1}, $T^\gamma_{\aconst^+}\subset G_{x}$.
Since $T^\gamma_{\aconst^+}=\bT^\gamma(E)_{\aconst^+}\cap G_{0^+}\subset\,^g\!K_E\cap G_{0^+}$ from Lemmas \ref{lem: centralizer2} and \ref{lem: centralizer2-rank1}, we have $T^\gamma_{\sd(\gamma)^+}\subset\, ^g\!J_\Sigma\cap G_{0^+}$. Hence, $T^\gamma_{\aconst^+}\subset H_{\Sigma,g}$.

\smallskip

(ii) Note that $G_{\xo}\cap T^\gamma\subset T^\gamma_0$. Then, we have
\[
\sharp\left(\left.(T^\gamma\cap G_{\xo})\,H_{\Sigma,g})\right/\,H_{\Sigma,g}\right)\le\sharp \left(\left.T^\gamma_0\right/T^\gamma_{\aconst^+}\right)\le q^{\abrank_G(\aconst+1)}.
\]
\end{proof}

\subsection {Inverse image under conjugation}\label{sub:inv-image} In this subsection we prove a lemma to control the volume change of an open compact subgroup under the conjugation map as we will need the result in Proposition \ref{prop: main} below. For regular semisimple elements $g\in G$ and $X\in \fkg$, denote by $\fkg_g$ and $\fkg_X$ the centralizer of $g$ and $X$ in $\fkg$, respectively. Define the Weyl discriminant as
  \[D(g):=|\det(\mathrm{Ad}(g)|\fkg/\fkg_g)|,\quad D(X):=|\det(\mathrm{ad}(X)|\fkg/\fkg_X)|.\]
Recall that the rank of $G$, to be denoted $\abrank_G$, is the dimension of (any) maximal torus in $G$. Define $\psi(\abrank_G)$ to be the maximal $d\in \Z_{\ge 0}$ such that $\phi(d)\le \abrank_G$, where $\phi$ is the Euler phi-function. Put $N(G):=\max(\psi(\abrank_G),\dim G)$.
Under the assumption that $p>N(G)+1$, recall from \cite[App B, Prop B]{Wal08} that there is a homeomorphism
  \[\exp:\fkg_{0^+}\stackrel{\sim}{\ra} G_{0^+}.\]
  Under the hypothesis $\Hypk$ this $\exp$ map is filtration preserving, in particular, it preserves the ratio of volumes.

\begin{lem}\label{l:conj-inv-estimate}
Suppose  $p>N(G)+1$ and $\Hypk$ is valid. Let $x\in \cB(\bG)$. Let $\gamma\in G_{x}\cap G_{0^+}$ and suppose that $\gamma$ is regular semisimple (so that $D(\gamma)\neq 0$). Consider the conjugation map $\psi_\gamma:G\ra G$ given by $\delta\mapsto \delta \gamma\delta^{-1}$. For each open compact subgroup $H\subset G_{\xo}$ containing $\gamma$, we have
  \[\frac{\vol_{G/Z}(\psi_\gamma^{-1}(H)\cap G_{\xo})}{\vol_{G/Z}(H)}\le \sharp\left((G_{\xo}\cap T^\gamma)H/H\right)\cdot  D(\gamma)^{-1}\cdot\sharp\left(H/(H\cap G_{x,0^+})\right).\]
\end{lem}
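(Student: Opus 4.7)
The strategy is to exploit the right-translation invariance of $\psi_\gamma^{-1}(H)\cap G_\xo$ under $G_\xo\cap T^\gamma$, combine this with a change of variables whose Jacobian accounts for $D(\gamma)^{-1}$, and then bound the resulting integral by partitioning $H$ along its Moy--Prasad structure.

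First, since $T^\gamma$ centralizes $\gamma$, any $\delta\in \psi_\gamma^{-1}(H)\cap G_\xo$ may be right-translated by $G_\xo\cap T^\gamma$ without leaving the set. By quotient integration,
\[
\vol_{G/Z}(\psi_\gamma^{-1}(H)\cap G_\xo)=\vol_{G/Z}(G_\xo\cap T^\gamma)\cdot\int_{G_\xo/(G_\xo\cap T^\gamma)}\mathbf{1}_H(\delta\gamma\delta^\inv)\,d\bar\delta,
\]
and the map $\phi_\gamma: G_\xo/(G_\xo\cap T^\gamma)\hookrightarrow G$, $\bar\delta\mapsto\delta\gamma\delta^\inv$, is injective with image the $G_\xo$-conjugacy orbit of $\gamma$.

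Second, under the hypothesis $\Hypk$ together with $p>N(G)+1$, the exponential $\exp:\fkg_{0^+}\simeq G_{0^+}$ is a filtration-preserving homeomorphism that transfers conjugation into the adjoint action. The differential of $\phi_\gamma$ at the identity coset is $Y\mapsto (1-\mathrm{Ad}(\gamma))Y$ on $\fkg/\fkt^\gamma$, whose absolute determinant equals $D(\gamma)$. Consequently, the pushforward of $d\bar\delta$ under $\phi_\gamma$ is $D(\gamma)^\inv$ times the natural transverse measure on the orbit inside $G$.

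Third, I would estimate $\int \mathbf{1}_H\circ\phi_\gamma\,d\bar\delta$ by decomposing $H=\bigsqcup_{j=1}^{N_2} h_j H_0$ into cosets of $H_0:=H\cap G_{\xo,0^+}$, giving the factor $N_2=[H:H_0]$. Each $h_j H_0$-coset contributes a piece whose measure is at most $D(\gamma)^\inv\vol(H_0)$ times a combinatorial factor $N_1=\#((G_\xo\cap T^\gamma)H/H)$ that accounts for how the torus-fibers $\delta(G_\xo\cap T^\gamma)$ of $\psi_\gamma$ distribute among $H$-cosets inside $(G_\xo\cap T^\gamma)H$. The Lie-algebra decomposition $\fkg=\fkt^\gamma\oplus\bigoplus_\alpha\fkg_\alpha$ and the compatibility of $\exp$ with the Moy--Prasad filtrations permit a Fubini-type argument that realizes this bound by factoring $H_0$ along the kernel and image directions of $1-\mathrm{Ad}(\gamma)$.

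\emph{The main obstacle} is carrying out the covering in step three: the image of $\phi_\gamma$ is a codimension-$\abrank_G$ analytic subset of $G$, so its pushforward (singular) measure is not directly comparable to the Haar measure on $H$. The resolution uses the root-space decomposition and the filtration-preserving property of $\exp$ to convert the transverse measure on $\phi_\gamma(G_\xo/(G_\xo\cap T^\gamma))\cap h_j H_0$ back into a Haar measure on $h_j H_0$ thickened in the $T^\gamma$-direction, producing $N_1$ as the cost of this thickening and $N_2$ as the number of $H_0$-cosets. Combining the three steps then yields the claimed inequality.
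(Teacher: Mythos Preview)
Your steps 1 and 2 set up a plausible orbital-integral interpretation, but step 3 is where the real argument must occur and you do not actually carry it out. You correctly identify the obstacle---the image of $\phi_\gamma$ has positive codimension, so its pushforward measure is singular with respect to Haar measure on $H$---but the ``resolution'' via thickening and a Fubini-type argument over root spaces is only a gesture. Making this rigorous would require showing that the torus-direction fibers can be reinserted compatibly with the Moy--Prasad filtration of $H_0$, and that the cost of doing so is exactly $N_1$; you have asserted this, not proved it. As written, the proposal does not close.

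The paper's proof sidesteps the analytic difficulty by a purely algebraic lattice computation. The key move is to replace $\psi_\gamma$ by the commutator map $C_\gamma:\delta\mapsto \delta\gamma\delta^{-1}\gamma^{-1}$; since $\gamma\in H$ one has $C_\gamma^{-1}(H)=\psi_\gamma^{-1}(H)$, but now the image lands near the identity rather than on the orbit of $\gamma$. After reducing to $H\subset G_{x,0^+}$ (which absorbs the factor $\sharp(H/(H\cap G_{x,0^+}))$), one transports via $\exp$ to the Lie algebra, where $C_\gamma$ becomes the \emph{linear} map $C_Y=-\mathrm{ad}(Y)$ with $Y=\exp^{-1}(\gamma)$. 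The volume ratio is then the lattice index $[C_Y^{-1}(\fkh)\cap\fkg_x:\fkh]$, which is bounded via the short exact sequence $0\to\fkg_Y\to\fkg\to\fkg/\fkg_Y\to 0$: on the quotient $\fkg/\fkg_Y$ the induced map $\overline{C_Y}$ is an isomorphism with $|\det\overline{C_Y}|=D(\gamma)$, giving the factor $D(\gamma)^{-1}$, while the kernel direction contributes $[\fkg_Y\cap\fkg_x:\fkg_Y\cap\fkh]$, which is $\sharp((G_x\cap T^\gamma)H/H)$. No thickening or singular-measure comparison is needed. If you want to salvage your approach, the cleanest fix is precisely this: pass to the commutator map so that the linearization is a genuine endomorphism of $\fkg$, and then argue with lattice indices rather than transverse measures.
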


\begin{proof}
It suffices to prove that the left hand side is bounded by $\sharp\left(G_{\xo}\cap T^\gamma/H\cap T^\gamma \right)\cdot  D(\gamma)^{-1}$ under the additional assumption that $H\subset G_{\xo}\cap G^+_0$. Indeed, in general, one only needs to also count the contribution on $H/(H\cap G_{x,0^+})$, which is bounded by its cardinality.

  Put $Y:=\exp^{-1}(\gamma)$ and $\fkh:=\exp^{-1}(H)\subset \fkg_{x,0^+}$.
  Note that $\fkh$ is an $\cO_k$-lattice in $\fkg$ since $H$ is an open compact subgroup.
  Write $C_Y:\fkg\ra \fkg$ for the map $X\mapsto [X,Y]$, whose restriction to $\fkg_{0^+}$ is going to be denoted by $C_{Y,0^+}$.
  Define $C_\gamma:G_{0^+}\ra G_{0^+}\gamma^{-1}$ by $\delta\mapsto \delta\gamma\delta^{-1}\gamma^{-1}$, which is the composition of $\psi_\gamma$ with the right multiplication by $\gamma^{-1}$. Since $\gamma\in H$ we have $C_\gamma^{-1}(H)=\psi_\gamma^{-1}(H)\subset G_{0^+}$. Via the exponential map, $C_\gamma:\psi_\gamma^{-1}(H)\cap G_{\xo}\ra H$ corresponds to
  \[C_{Y,0^+}:\exp^{-1}(C^{-1}_\gamma(H)\cap G_{\xo})=C_{Y,0^+}^{-1}(\fkh\cap \frg_{\xo})\ra \fkh.\]
  Since the exponential map preserves the ratio of volumes,
  \[\frac{\vol_{G/Z}(\psi_\gamma^{-1}(H)\cap G_{x})}{\vol_{G/Z}(H)} = \frac{\vol_{G/Z}(C_{Y,0^+}^{-1}(\fkh)\cap \frg_{\xo})}{\vol_{G/Z}(\fkh)}
  \le \frac{\vol_{G/Z}(C_{Y}^{-1}(\fkh)\cap \frg_{\xo}))}{\vol_{G/Z}(\fkh)} = [C_{Y}^{-1}(\fkh)\cap \frg_{\xo}:\fkh] .\]

  We will be done if we show that $[C_{Y}^{-1}(\fkh)\cap \frg_{\xo}:\fkh]\le D(Y)^{-1} [\fkg_Y\cap \frg_{\xo}: \fkg_Y\cap \fkh]$. Write $\mathrm{pr}$ for the projection map $\frg_{\xo} \ra \frg_{\xo}/(\frg_{\xo}\cap \fkg_Y)$. Consider the commutative diagram
\[\xymatrix{ \fkg_Y \cap \frg_{\xo} \ar[r] \ar@{^(->}[d] & \fkg_Y \cap \fkh \ar@{^(->}[r] \ar@{^(->}[d] & \fkg_Y \cap \frg_{\xo}  \ar@{^(->}[d] \\
  C_Y^{-1}(\fkh) \cap \frg_{\xo} \ar[r]^-{C_Y} \ar@{^(->}[d] &   \fkh \ar@{^(->}[r] \ar@{^(->}[d] &  \frg_{\xo}  \ar[d]^-{\mathrm{pr}} \\
  \mathrm{pr}(C_Y^{-1}(\fkh)\cap \frg_{\xo})  \ar[r]^-{\ol C_Y} &  \mathrm{pr}( \fkh ) \ar@{^(->}[r]&  \frg_{\xo} /(\fkg_Y\cap \frg_{\xo}) ~\subset~ \fkg/\fkg_Y}.\]
  Denote by the preimage of $\mathrm{pr}(\fkh)$ in $\mathrm{pr}(C_Y^{-1}(\fkh)$ (resp. $C_Y^{-1}(\fkh) \cap \frg_{\xo}$) by $L_2$ and $L_1$. Then $L_2$ and $L_1$ are $\cO_k$-lattices in $\fkg/\fkg_Y$ and $\fkg$, respectively, where $\cO_k$ is the ring of integers in $k$. So we have
  \[[C_{Y}^{-1}(\fkh)\cap \frg_{\xo}:\fkh]\le [L_1:\fkh] = [\fkg_Y\cap \frg_{\xo}: \fkg_Y\cap \fkh] [L_2:\mathrm{pr}(\fkh)].\]
  Since $\ol{C}_Y=\mathrm{ad}(Y)$ as $k$-linear isomorphisms on $\fkg/\fkg_Y$, we see that $[L_2:\mathrm{pr}(\fkh)] = D(Y)^{-1}$, completing the proof.
\end{proof}

Combining Proposition \ref{prop: centralizer0} and Lemma \ref{l:conj-inv-estimate}, we have the following:

\begin{cor}\label{cor:conj-inv-estimate}
We keep the situation and notation from Proposition \ref{prop: centralizer0}. Then, we have
\[
\frac{\vol_{G/Z}(\psi_\gamma^{-1}(\,H_{\Sigma,g})\cap G_{\xo})}{\vol_{G/Z}(\,H_{\Sigma,g})}
\le D(\gamma)^{-1} q^{\dim(G)+\abrank_G(\aconst_{\gamma,\Sigma}+1)}
\]
\end{cor}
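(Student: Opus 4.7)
The plan is to apply Lemma \ref{l:conj-inv-estimate} with $H := H_{\Sigma,g}$ and to bound each of the three factors appearing in its conclusion. The first factor is controlled directly by Proposition \ref{prop: centralizer0}(ii); the second is already $D(\gamma)^{-1}$; and the third is handled by a coarse bound on the reductive quotient of the parahoric at $x$.

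First I would verify that Lemma \ref{l:conj-inv-estimate} applies. By construction $H_{\Sigma,g} = {}^gJ_\Sigma \cap G_\xo$ is an open compact subgroup of $G_\xo$ containing $\gamma$, and $\gamma \in G_\rs$ by hypothesis; the topologically unipotent condition $\gamma \in G_{0^+}$ is inherited from the standing assumptions on $\gamma$ in Proposition \ref{prop: centralizer0} (the same assumption that makes Lemmas \ref{lem: centralizer1}--\ref{lem: centralizer2-rank1} applicable in its proof). Lemma \ref{l:conj-inv-estimate} then yields
\[
\frac{\vol_{G/Z}(\psi_\gamma^{-1}(H_{\Sigma,g}) \cap G_\xo)}{\vol_{G/Z}(H_{\Sigma,g})} \le \sharp\bigl((G_\xo \cap T^\gamma)\, H_{\Sigma,g}/H_{\Sigma,g}\bigr) \cdot D(\gamma)^{-1} \cdot \sharp\bigl(H_{\Sigma,g}/(H_{\Sigma,g} \cap G_{x,0^+})\bigr).
\]

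Next I would bound the three factors separately. By Proposition \ref{prop: centralizer0}(ii) the first is at most $q^{\abrank_G(\aconst_{\gamma,\Sigma}+1)}$. For the third, the inclusion $H_{\Sigma,g} \subset G_\xo = G_{x,0}$ gives an injection of $H_{\Sigma,g}/(H_{\Sigma,g} \cap G_{x,0^+})$ into the Moy--Prasad quotient $G_{x,0}/G_{x,0^+}$, which is the group of $\mathbb{F}_q$-points of the connected reductive quotient at $x$; by Steinberg's order formula (reducing to the split case by an unramified base change, exactly in the spirit of the dimension estimate used in the proof of Lemma \ref{l:bound-on-d}(ii)) this group has cardinality at most $q^{\dim G}$. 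Multiplying the three bounds gives
\[
\frac{\vol_{G/Z}(\psi_\gamma^{-1}(H_{\Sigma,g}) \cap G_\xo)}{\vol_{G/Z}(H_{\Sigma,g})} \le D(\gamma)^{-1}\cdot q^{\abrank_G(\aconst_{\gamma,\Sigma}+1)}\cdot q^{\dim G},
\]
which is the claim.

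There is no real obstacle here: the substantive content has already been established in Proposition \ref{prop: centralizer0} (the centralizer-torus analysis relying on Lemmas \ref{lem: centralizer1}--\ref{lem: centralizer2-rank1}) and in Lemma \ref{l:conj-inv-estimate} (the exponential/$\mathrm{ad}$-map volume comparison). The corollary is essentially a bookkeeping step packaging them into a single clean estimate suitable for the later trace-character calculations.
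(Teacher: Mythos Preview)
Your proposal is correct and matches the paper's approach exactly: the paper's proof consists of the single sentence ``Combining Proposition \ref{prop: centralizer0} and Lemma \ref{l:conj-inv-estimate}, we have the following,'' and you have spelled out precisely the three-factor bookkeeping that this entails. Your bound $\sharp(H_{\Sigma,g}/(H_{\Sigma,g}\cap G_{x,0^+}))\le |G_{x}/G_{x,0^+}|\le q^{\dim G}$ for the third factor is the same crude estimate the paper uses implicitly here and elsewhere (e.g.\ in Lemma \ref{l:bound-on-d}(ii)).
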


\subsection{Intersection of $L_{\stm}$ with a maximal unipotent subgroup}
For later use, we study the intersections of $L_{\stm}$ with unipotent subgroups in this subsection.
Consider a tamely ramified twisted Levi sequence $(\bG',\bG)$.
  Let $\bT$ (resp. $\bT'$) be a maximally $k$-split maximal torus of $\bG$ (resp. $\bG'$) such that $\bT^{\prime s}\subset\bT^s$ where $\bT^s$ and $\bT^{\prime s}$ are the $k$-split components of $\bT$ and $\bT'$ respectively. Set $M:=Z_G^\circ(T^s)$ (resp. $M':=Z_G^\circ(T^{\prime s})$), a minimal Levi subgroup of $G$ containing $T$ (resp. $T'$). Note that $M\subset M\rq$. Fix a parabolic subgroup $MN$ of $G$ (resp. $M'N'$), where $N$ (resp. $N'$) is the unipotent subgroup such that $N'\subset N$.

\begin{lem} \label{lem: inequality} We keep the notation from above. Let $x\in\Apt(T')$.

\begin{itemize}
\item[(i)]
For any $\rma,\rma'\in \mathbb R$ with $\rma>\rma'>0$,
\[
[(G_{x,\rma'}\cap N'):((G'_{x,\rma'}G_{x,\rma})\cap N')]^2\le  q^{\dim_k(N)}[G_{x,\rma'}:(G'_{x,\rma'}G_{x,a})].
\]

\item[(ii)]
\[
[(G_{x}\cap N'):((G'_{x}G_{x,0^+})\cap N')]^2\le  q^{\dim_k(N)}[G_{x}:(G'_{x',0}G_{x,0^+})].
\]

\item[(iii)]
\[
[(G_{\xo}\cap N'):((G'_{x}G_{x,\rma})\cap N')]^2 \le q^{2\dim_k(N)}[G_{\xo}:(G'_{x}G_{x,\rma})].
\]
\end{itemize}
\end{lem}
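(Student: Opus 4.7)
My plan is to prove all three inequalities by a single strategy: apply the triangular (Iwahori-type) decomposition of the relevant Moy--Prasad subgroups with respect to the parabolic pair $(P',P'^-) = (M'N', M'N'^-)$ of $G$, where $N'^-$ denotes the unipotent radical of the parabolic opposite to $P'$ with Levi $M'$, and then compare the $N'$- and $N'^-$-contributions affine-root by affine-root.

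For part (i), set $H := G'_{x,a'}G_{x,a}$. Both $K=G_{x,a'}$ and $K=H$ admit the unique triangular factorization $K = (K\cap N'^-)(K\cap M')(K\cap N')$, so that
\[
[G_{x,a'}:H] = [G_{x,a'}\cap N'^-:H\cap N'^-]\cdot[G_{x,a'}\cap M':H\cap M']\cdot[G_{x,a'}\cap N':H\cap N'].
\]
Using the compatibility of Moy--Prasad filtrations on $G$, $G'$, and $M'$, the middle factor equals $1$. It thus suffices to prove
\[
[G_{x,a'}\cap N':H\cap N']\le q^{\dim_k(N)}\cdot[G_{x,a'}\cap N'^-:H\cap N'^-].
\]
I would prove this root by root, after passing to a tame splitting extension if needed. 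For $\alpha\in\Phi(N')$ a Chevalley-type pinning identifies $U_\alpha\cap G_{x,s}$ with $\fkp_k^{\lceil s-\alpha(x)\rceil}$. If $\alpha$ is a root of $G'$ then $U_\alpha\cap H = U_\alpha\cap G_{x,a'}$ (local index $1$); otherwise $U_\alpha\cap G'_{x,a'}$ is trivial, $U_\alpha\cap H = U_\alpha\cap G_{x,a}$, and the local index is $q^{e_\alpha}$ with $e_\alpha\in\{\lfloor a-a'\rfloor,\lceil a-a'\rceil\}$. The parallel computation for $-\alpha$ yields local index $q^{e_{-\alpha}}$ with $e_{-\alpha}$ in the same two-element set, so $|e_\alpha-e_{-\alpha}|\le 1$. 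Summing over the at most $\dim_k(N)$ contributing roots gives the desired inequality, and squaring completes (i).

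For part (ii) the same scheme applies with $a'=0$ and $a=0^+$, using that $G_x=G_{x,0}$ admits an Iwahori decomposition with respect to $P'$; the middle factor is again $1$ and the root-by-root analysis is unchanged. For part (iii) I would telescope
\[
[G_x:G'_xG_{x,a}] = [G_x:G'_xG_{x,0^+}]\cdot[G'_xG_{x,0^+}:G'_xG_{x,a}],
\]
apply (ii) to the first factor and (i) (with $a'$ replaced by $0^+$) to the second, and multiply; the two factors of $q^{\dim_k(N)}$ combine to give the $q^{2\dim_k(N)}$ on the right-hand side of (iii).

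The principal obstacle I anticipate is the careful verification of the triangular decomposition for the product groups $G'_{x,a'}G_{x,a}$ and $G'_xG_{x,0^+}$ (the individual factors decompose with respect to $P'$, but their product requires extra care), together with uniform handling of the ceiling functions appearing in the affine-root indices. Once these ingredients are in place the rest of the argument is essentially bookkeeping.
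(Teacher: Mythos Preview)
Your overall strategy---triangular (Iwahori) decomposition plus a comparison of the $N'$- and $N'^-$-contributions---matches the paper exactly for parts (i) and (iii), including the telescoping in (iii). One minor correction in (i): the middle factor $[G_{x,a'}\cap M':H\cap M']$ is not $1$ in general, since $M'=Z_G^\circ(T'^s)$ need not be contained in $G'$ (indeed $T'^s$ is only the split part of a maximal torus of $G'$, not a central torus). The paper does not claim this; it simply drops the middle factor as $\ge 1$, which is all that is needed, and your argument goes through with that trivial inequality in place.

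The real gap is in part (ii). The parahoric $G_x=G_{x,0}$ does \emph{not} in general admit a triangular decomposition with respect to $P'$: already for $G=\GL_2$, $x$ a hyperspecial vertex, and $P'$ the Borel, the Weyl element $\left(\begin{smallmatrix}0&1\\1&0\end{smallmatrix}\right)\in\GL_2(\cO_k)$ lies outside the big cell $\overline N'\cdot T\cdot N'$. So you cannot simply set $a'=0$ and rerun the argument of (i). The paper circumvents this by passing to the reductive quotient $G_x/G_{x,0^+}$ and using the weaker inequality
\[
[G_x:G'_xG_{x,0^+}]\ \ge\ [(G_x\cap N')G_{x,0^+}:(G'_xG_{x,0^+}\cap N')G_{x,0^+}]\cdot[(G_x\cap\overline N')G_{x,0^+}:(G'_xG_{x,0^+}\cap\overline N')G_{x,0^+}],
\]
which follows from injectivity of the product map on opposite unipotents in the finite reductive quotient rather than from a triangular factorization of $G_x$ itself. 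Once this inequality is in hand, the root-by-root comparison you outline finishes (ii) exactly as in (i). You correctly flagged the product groups $G'_{x,a'}G_{x,a}$ as a potential obstacle, but the obstruction actually lies with $G_x$.
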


\proof
(i) Both $\CaJ:=G_{x,\rma'}$ and $\CaJ':=(G'_{x,\rma'}G_{x,\rma})$ are decomposable with respect to $M'$ and $N'$, that is,  $\CaJ=(\CaJ\cap \overline N')\cdot(\CaJ\cap M')\cdot (\CaJ\cap N')$, etc. Write $Y_{X}:=Y\cap X$  for any $X, Y\subset G$. Then,
$[\CaJ:\CaJ']=[\CaJ_{\overline N'}:\CaJ'_{\overline N'}]\cdot[\CaJ_{M'}:\CaJ'_{M'}]\cdot [\CaJ_{N'}:\CaJ'_{N'}]$
and we have
\begin{eqnarray}
[\CaJ:\CaJ']&=&[\CaJ_{M'}:\CaJ'_{M'}][\CaJ_{N'}:\CaJ'_{N'}][\CaJ_{\overline N'}:\CaJ'_{\overline N'}]
\ge[\CaJ_{N'}:\CaJ'_{N'}][\CaJ_{\overline N'}:\CaJ'_{\overline N'}]\nonumber\\
&\ge& \frac1{q^{\dim_k(N')}}[\CaJ_{N'}:\CaJ'_{N'}]^2
\ge \frac1{q^{\dim_k(N)}}[\CaJ_{N'}:\CaJ'_{N'}]^2 .\nonumber
\end{eqnarray}

(ii) This follows from
\begin{eqnarray}
[G_{x}:G_{x}'G_{x+}]&\ge&[(G_{x})_{N'}G_{x,0^+}:(G_{x}'G_{x+})_{N'}G_{x,0^+}]\cdot[(G_{x})_{\overline N'}G_{x,0^+}:(G_{x}'G_{x+})_{\overline N'}G_{x,0^+}]\nonumber\\
&=&[(G_{x})_{N'}G_{x,0^+}:(G_{x}'G_{x+})_{N'}G_{x,0^+}]\cdot[(G_{x})_{\overline N'}:(G_{x}'G_{x+})_{\overline N'}]\nonumber\\
&\ge&\frac1{q^{\dim_k(N)}}[(G_{\xo})_{N'}:(G'_{x}G_{x,0^+})_{N'}]^2.\nonumber
\end{eqnarray}

\smallskip

(iii) We have
\begin{eqnarray}
[G_{\xo}:(G'_{x}G_{x,\rma})]&=&[G_{\xo}:(G'_{x}G_{x,0^+})][(G'_{\xo}G_{\xo,0^+}):(G'_{x}G_{x,\rma})]\nonumber\\
&=&[G_{\xo}:(G'_{x}G_{x,0^+})][G_{\xo,0^+}:(G'_{x,0^+}G_{x,\rma})].\nonumber
\end{eqnarray}
Combining (i) and (ii), we have
\begin{eqnarray}
[G_{\xo}:(G'_{x}G_{x,\rma})]&\ge& \frac1{q^{2\dim_k(N)}}[(G_{\xo})_{N'}:(G'_{x}G_{x,0^+})_{N'}]^2[(G_{\xo,0^+})_{N'}:(G'_{x,0^+}G_{x,\rma})_{N'}]^2
\nonumber\\
&=&\frac1{q^{2\dim_k(N)}}[(G_{\xo})_{N'}:(G'_{x}G_{x,\rma})_{N'}]^2.\nonumber
\end{eqnarray}
\qed

\section{Asymptotic behavior of supercuspidal characters}

\subsection{Main theorem}\label{sub:main-thm}

\begin{conj}\label{c:asymptotic-char}
 Consider the set of $\pi$ in $\Irr^{\scusp}(G)$ such that the central character of $\pi$ is unitary.
   For each fixed $\gamma\in G_{\rs}$,
\[ \frac{\Theta_{\pi}(\gamma)}{\fdeg(\pi)}\ra 0\quad \mbox{as}~\deg(\pi)\ra \infty;\]
   namely for each $\epsilon>0$ there exists $d_\epsilon>0$ such that $|\Theta_{\pi}(\gamma)/\fdeg(\pi)|<\epsilon$ whenever $\deg(\pi)>d_\epsilon$.
  \end{conj}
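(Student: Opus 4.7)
The plan is to prove the stronger quantitative estimate \eqref{mainresult}, from which the conjecture follows immediately: once one has $|\Theta_\pi(\gamma)|\le C(G)\,D(\gamma)^{-A}\deg(\pi)^{1-\kappa}$ with $\kappa>0$, dividing by $\deg(\pi)$ and fixing $\gamma$ yields $|\Theta_\pi(\gamma)/\deg(\pi)|\le C(G)D(\gamma)^{-A}\deg(\pi)^{-\kappa}\to 0$. I shall therefore reduce to Yu's construction and aim for the bound \eqref{mainresult}.

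\textbf{Reduction to Yu-type supercuspidals.} Assuming $p$ is large enough so that $\Hypk$, $\HypB$, $\HypGT$, $\HypN$ of \cite{Kim07} hold, every $\pi\in\Irr^{\scusp}(G)$ is of the form $\pi_\Sigma=\cind^G_{J_\Sigma}\rho_\Sigma$ from a generic $G$-datum $\Sigma=(\vec\bG,x_0,\vec r,\vec\phi,\rho)$, so I need only bound $\Theta_{\pi_\Sigma}(\gamma)$. Up to $G$-conjugacy the pair $(\vec\bG,x_0)$ ranges over a finite set (Remark \ref{rem: G-datum}), hence one may fix it throughout and let $\vec r,\vec\phi,\rho$ vary; then $\deg(\pi_\Sigma)\to\infty$ forces $r_\Sigma=r_d\to\infty$ by Lemma \ref{l:bound-on-d}.

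\textbf{Local constancy + Mackey.} Fix $\gamma\in G_{\rs}$, which we may assume topologically unipotent up to conjugation, lying in $G_{x_0}$. By \cite{AK07,Meyer-Solleveld:growth}, once the depth $r_\Sigma$ is large compared with the singular depth $\sd(\gamma)$, the character $\Theta_{\pi_\Sigma}$ is constant on a coset $\gamma K$ for a Moy--Prasad subgroup $K$ controlled by $\sd(\gamma)$ and $r_\Sigma$, so by \eqref{i:locconst} and Frobenius reciprocity applied to $\pi_\Sigma=\cind^G_{J_\Sigma}\rho_\Sigma$,
\[
\Theta_{\pi_\Sigma}(\gamma)=\frac{1}{\vol(K)}\sum_{g\in J_\Sigma\bs G/K,\ g^{-1}\gamma g\in J_\Sigma}\vol(J_\Sigma\cap{}^gK)\,\Mtr\rho_\Sigma(g^{-1}\gamma g).
\]
Using $|\Mtr\rho_\Sigma|\le\dim\rho_\Sigma$ and the formula $\deg(\pi_\Sigma)=\dim(\rho_\Sigma)/\vol_{G/Z}(J_\Sigma/Z)$ from Lemma \ref{l:bound-on-d}, this reduces the problem to estimating the number of $g\in G_{x_0}/(G_{x_0}\cap gJ_\Sigma g^{-1})$ fixed under right translation by $\gamma$; equivalently, one must control the volume of the pre-image $\psi_\gamma^{-1}({}^gJ_\Sigma\cap G_{x_0})\cap G_{x_0}$ under the conjugation map.

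\textbf{Finishing via Moy--Prasad estimates.} Combining Proposition \ref{prop: centralizer0}, which shows $T^\gamma_{A_{\gamma,\Sigma}^+}\subset {}^gJ_\Sigma\cap G_{x_0}$ with $A_{\gamma,\Sigma}=h_G\sd(\gamma)+s_{d-1}$, with Corollary \ref{cor:conj-inv-estimate}, the contribution of each double coset is at most $D(\gamma)^{-1}q^{\dim G+r_G(A_{\gamma,\Sigma}+1)}$ times $\vol({}^gJ_\Sigma\cap G_{x_0})$. After decomposing $G_{x_0}\bs G/J_\Sigma$ along the twisted Levi filtration in $J_\Sigma$ and summing, the unipotent subgroup estimates of Lemma \ref{lem: inequality} give a geometric gain of the form $[G_{x_0}:G'_{x_0}G_{x_0,s_{d-1}}]^{-1/2}q^{O(\dim N)}$ relative to $\vol(J_\Sigma)^{-1}$; this is where the saving exponent $\kappa>0$ arises, because the ratio $[G_{x_0}:L_{s_\Sigma}]/\vol(J_\Sigma)^{-1}$ is a power of $q^{r_\Sigma}$ bounded below linearly in $r_\Sigma$ by Lemma \ref{l:bound-on-d}. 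Assembling these ingredients yields \eqref{mainresult} with explicit $A$ and $\kappa$ depending only on $G$, hence Conjecture \ref{c:asymptotic-char} in the tame case.

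\textbf{Main obstacle.} The genuinely hard points are (a) removing the large-$p$ hypothesis, since exhaustion of $\Irr^{\scusp}(G)$ by $\Irr^{\Yu}(G)$ is currently unknown in general, and (b) extending the estimate from topologically unipotent $\gamma$ to arbitrary regular semisimple elements, which in principle requires a topological Jordan decomposition argument and a descent step relating $\Theta_{\pi_\Sigma}(\gamma)$ to characters on the connected centralizer of the semisimple part of $\gamma$; controlling the dependence on $\gamma$ uniformly in both its singular depth and the depth of $\pi_\Sigma$ is what ultimately forces the appearance of the Weyl discriminant factor $D(\gamma)^{-A}$ in \eqref{mainresult}.
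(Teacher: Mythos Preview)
The statement is a \emph{conjecture}; the paper does not prove it in full generality. What the paper proves is Theorem~\ref{t:asymptotic-char} (and the quantitative Theorem~\ref{thm:uniform-bound}), which verifies the conjecture only for $\gamma\in G_{0^+}$ and $\pi\in\Irr^{\Yu}(G)$, under $\Hypk$, $\HypB$, $\HypGT$. Your proposal is, in substance, a sketch of the paper's proof of \emph{that partial result}, and for that purpose the overall architecture---reduction to Yu data, local constancy (Lemma~\ref{lem: loc const}), Mackey decomposition, fixed-point count via Proposition~\ref{prop: centralizer0} and Corollary~\ref{cor:conj-inv-estimate}, and the unipotent-intersection saving (Lemma~\ref{lem: inequality})---matches the paper's approach essentially step for step.

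Two points where your write-up diverges from what the paper actually does. First, the sentence ``Fix $\gamma\in G_{\rs}$, which we may assume topologically unipotent up to conjugation'' is not a valid reduction: an arbitrary regular semisimple $\gamma$ is \emph{not} conjugate to a topologically unipotent element, and the paper makes no such claim. This is exactly the obstacle you name in (b), so you should not assert it earlier as if it were harmless; the paper simply \emph{assumes} $\gamma\in G_{0^+}$ from the outset. Second, your displayed Mackey/Frobenius formula for $\Theta_{\pi_\Sigma}(\gamma)$ is not the one used in the paper. The paper does not sum over $J_\Sigma\backslash G/K$ with the condition $g^{-1}\gamma g\in J_\Sigma$; rather it restricts $\pi$ to $G_{x}$ via Corollary~\ref{c:Mackey}, obtains the decomposition~\eqref{Vpi-dec}, then restricts the sum to the set $\cX^\circ_\Sigma$ of double cosets with nonzero $G_{y,r^+}$-invariants (Lemma~\ref{l:no-inv-in-ind}, Lemma~\ref{lem: X_0-2}), bounds $|G_x\backslash\cX^\circ/J|$ by Proposition~\ref{p:JXGx}, and finally controls each term by counting $\gamma$-fixed cosets in $(G_x\cap{}^gJ)\backslash G_x$ as in the proof of Proposition~\ref{prop: main}. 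Your sketch elides the role of $\cX^\circ$ and the two separate bounds (Lemmas~\ref{lem: X_0-1} and~\ref{lem: X_0-2}) that pin it down; these are what make the double-coset count finite and polynomially controlled in $r$, and hence are not optional.

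In short: as a proof of the full conjecture your proposal is incomplete for exactly the reasons you list under ``Main obstacle''; as a proof of Theorem~\ref{t:asymptotic-char} it is the paper's argument in outline, but with one unjustified reduction and a Mackey step that should be rewritten to go through the $\cX^\circ$ machinery of \S\ref{sub:main-thm}.
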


  Our main theorem in the qualitative form is a partial confirmation of the conjecture under the hypotheses discussed in \S\ref{sub:hypo} and above Lemma \ref{l:conj-inv-estimate}. 

  \begin{thm}\label{t:asymptotic-char} Suppose that $\Hypk$, $\HypB$ and $\HypGT$ are valid. Then, Conjecture \ref{c:asymptotic-char} holds true if $\gamma$ and $\pi$ are restricted to the sets $G_{0^+}$ and $\Irr^{\Yu}(G)$, respectively. 
\end{thm}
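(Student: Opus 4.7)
The plan is to bound $|\Theta_{\pi_\Sigma}(\gamma)|/\deg(\pi_\Sigma)$ via the Mackey decomposition of $\pi_\Sigma=\cind_{J_\Sigma}^G\rho_\Sigma$ combined with the subgroup-counting estimates of Section~3, then to show this bound tends to zero as the depth $r_\Sigma\to\infty$.

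\emph{Reduction.} Let $\Sigma=(\vec\bG,x,\vec r,\vec\phi,\rho)$ be the generic datum producing $\pi_\Sigma$. By Remark~2.9(iii) only finitely many pairs $(\vec\bG,x)$ arise up to $G$-conjugacy, and $G$-conjugating $\Sigma$ changes neither $|\Theta_{\pi_\Sigma}(\gamma)|$ nor $\deg(\pi_\Sigma)$, so we may fix such a pair. Since $\gamma\in G_{0^+}$ is topologically unipotent, conjugating $\gamma$ within its $G$-class allows us to assume $\gamma\in G_x$.

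\emph{Mackey estimate.} Choose an open compact subgroup $K\subset G_x$ small enough that $\Theta_{\pi_\Sigma}$ is constant on $\gamma K$; by the local-constancy results of \cite{AK07, Meyer-Solleveld:growth} one may take $K=G_{x,r'}$ with $r'$ depending only on $r_\Sigma$ and $\sd(\gamma)$, so that $\Theta_{\pi_\Sigma}(\gamma)=\Mtr(\pi_\Sigma(\gamma)\mid V_{\pi_\Sigma}^{K})$. Expanding $\pi_\Sigma|_{G_x}$ through the Mackey formula, restricting to $K$-invariants, and estimating the trace on each induced constituent by $\dim(\rho_\Sigma)$ times the number of $\gamma$-fixed cosets, one arrives at
\[
|\Theta_{\pi_\Sigma}(\gamma)|\le\dim(\rho_\Sigma)\sum_{g}\frac{\vol_{G/Z_G}\!\bigl(\psi_\gamma^{-1}(H_{\Sigma,g})\cap G_x\bigr)}{\vol_{G/Z_G}(H_{\Sigma,g})},
\]
where the sum is over $g\in G_x\backslash G/J_\Sigma$ with $\gamma\in H_{\Sigma,g}:={}^gJ_\Sigma\cap G_x$. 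Corollary~3.5 bounds each summand by $D(\gamma)^{-1}q^{\dim G+\abrank_G(\aconst_{\gamma,\Sigma}+1)}$, where $\aconst_{\gamma,\Sigma}$ is essentially $h_G\sd(\gamma)+s_{d-1}$, while Lemma~3.6 controls the indices $[G_x:H_{\Sigma,g}]$ and therefore the number of contributing cosets in terms of unipotent intersections of parahoric subgroups.

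\emph{Passage to the limit and main obstacle.} Dividing by $\deg(\pi_\Sigma)=\dim(\rho_\Sigma)/\vol_{G/Z_G}(J_\Sigma/Z_G)$ from Lemma~2.12(i) assembles these ingredients into an estimate of the shape
\[
\frac{|\Theta_{\pi_\Sigma}(\gamma)|}{\deg(\pi_\Sigma)}\le C(G)\,D(\gamma)^{-1}\,q^{\abrank_G\cdot\aconst_{\gamma,\Sigma}}\,\vol_{G/Z_G}(L_{s_\Sigma}/Z_G),
\]
with $L_{s_\Sigma}=G^{d-1}_{[x]}G_{s_\Sigma}$ as at the end of Section~2 and a constant $C(G)$ depending only on $G$. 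As $r_\Sigma\to\infty$ we have $s_{d-1}\to\infty$, so $\aconst_{\gamma,\Sigma}$ grows only linearly in $r_{d-1}$, while $\vol_{G/Z_G}(L_{s_\Sigma}/Z_G)^{-1}$ grows at least like $q^{c\,r_{d-1}}$ for a constant $c$ strictly greater than $\abrank_G h_G/2$. Lemma~2.12(ii) together with the uniform estimate $\dim(\rho_\Sigma)\le q^{\dim\blieG}$ forces $r_\Sigma\to\infty$ whenever $\deg(\pi_\Sigma)\to\infty$, so the right-hand side tends to $0$ and the theorem follows. The main obstacle is precisely to verify that the $q$-exponent above is strictly negative \emph{uniformly} in $\Sigma$: one must weigh the fixed-point blow-up $q^{\abrank_G h_G\,\sd(\gamma)+\abrank_G s_{d-1}}$ against the volume decay coming from the twisted-Levi filtration, and this asymmetry is exactly what the unipotent-intersection inequalities of Lemma~3.6(iii) are designed to supply.
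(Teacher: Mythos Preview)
Your overall architecture is right (local constancy, Mackey decomposition, fixed-point count via Corollary~3.8, then let $r_\Sigma\to\infty$), but there is a genuine gap in the step where you bound the number of contributing double cosets.

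Your displayed sum runs over \emph{all} $g\in G_x\backslash G/J_\Sigma$ with $\gamma\in H_{\Sigma,g}$. This index set is not a priori finite, and you then assert that ``Lemma~3.6 controls the indices $[G_x:H_{\Sigma,g}]$ and therefore the number of contributing cosets.'' Lemma~3.6 does nothing of the sort: it compares unipotent intersections of two fixed parahoric-type subgroups, and says nothing about how many double cosets occur. In the paper the relevant set is $\cX^\circ_\Sigma=\{g:\,(\Ind_{G_x\cap{}^gJ}^{G_x}{}^g\rho)^{G_{y,r^+}}\ne0\}$, and the heart of \S4.4 is to prove $G_x\backslash\cX^\circ_\Sigma/J_\Sigma$ is finite and to bound its cardinality. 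This requires three separate ingredients you have omitted: (a)~Lemma~4.9, using that $V_\rho^{N\cap J}=0$ for every unipotent $N$ (supercuspidality), to show $\cX^\circ\subset\cX$; (b)~Lemma~4.11, a Cartan-decomposition argument bounding $\cX$ in terms of $T^-(\urtm)$; and crucially (c)~Lemma~4.13, which uses the good minimal $K$-type structure (this is where hypotheses $\HypB$ and $\HypGT$ are actually used) together with Lemma~3.1 to force each contributing coset to meet $S_{x,y}\,G^{d-1}G_{x,0^+}$. Only after (a)--(c) does Lemma~3.6 enter, inside Lemma~4.15, to convert the $G^{d-1}$-constraint into the factor $\vol(L_s)^{1/2}$. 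Without (c) you cannot see $G^{d-1}$ or $L_s$ at all, so your final bound with $\vol_{G/Z_G}(L_{s_\Sigma})$ on the right-hand side is unjustified.

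Two smaller points. First, the local-constancy subgroup must be taken at a point $y\in\Apt(T^\gamma)$, not at $x$; the paper arranges $x\in\overline{C_y}$ in the reductions of \S4.2 precisely so that both $G_{y,r^+}\subset G_x$ (needed for the Mackey argument) and the constancy result of \cite{AK07} applies. Second, the correct power of $\vol(L_s)$ is $1/2$, not $1$, and the verification that $q^{\abrank_Gs}\vol(L_s)^{1/2}\to0$ (Lemma~4.18) needs the inequality $\dim G-\dim G'>2\abrank_G$, which fails for certain Levi subgroups in type $A$ and must be excluded by hand.
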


  The proof is postponed to \S\ref{sub:proof-main-thm} below. Actually we will establish a rather explicit upper bound on $|\Theta_{\pi}(\gamma)/\fdeg(\pi)|$, which will lead to a quantitative strengthening of the above theorem. See Theorem \ref{thm:uniform-bound} below.

  The central character should be unitary in the conjecture; it would be a problem if each $\pi$ is twisted by arbitrary (non-unitary) unramified characters of $G$. However the assumption plays no role in the theorem since every unramified character is trivial on $G_{0^+}$.

 We are cautious to restrict the conjecture to supercuspidal representations as our result does not extend beyond the supercuspidal case. However it is natural to ask whether the conjecture is still true for discrete series representations. As a small piece of psychological evidence we verify the analogue of Conjecture \ref{c:asymptotic-char} for discrete series of real groups on elliptic regular elements in \S\ref{sub:real-group-char} below.

\subsection{Reductions} \label{rmk: reductions} \rm Let $\pi$ be as in Theorem \ref{t:asymptotic-char} associated to a generic $G$-datum $\Sigma$.
In proving Theorem \ref{t:asymptotic-char}, we may assume that each $\pi$ (and hence $\Sigma$) is associated to a fixed orbit of $(\vec\bG,\xo)$ since there are only a finite number of $(\vec\bG,\xo)$ up to conjugacy.
Let $\bT^\gamma$ denote the unique maximal torus containing $\gamma$, and pick any  $y\in\Apt(T^\gamma)=\Bd(T^\gamma)$. We may assume the following without loss of generality.

\begin{enumerate}
\item[(1)] $\phi_d=1$: since $\Theta_{\overline\phi_d\otimes\pi}(\gamma)=\overline\phi_d(\gamma)\Theta_{\pi}(\gamma)$ and $\deg(\pi)=\deg(\overline\phi_d\otimes\pi)$, it is enough to verify the theorem only for the generic $G$-data such that $\phi_d=1$. Note that  the depth of $\pi$ is given by $\rtm_{d-1}$ if $\phi_d=1$.

\item[(2)] $\rtm_{\Sigma}=\rtm_{d-1}\ge 1$: Lemma \ref{l:bound-on-d} implies that $\fdeg(\pi)\rightarrow\infty$ is equivalent to $\vol_{G/Z}(J_{\Sigma})\rightarrow0$ as $i\rightarrow\infty$, which is in turn equivalent to $r_{\Sigma}\rightarrow\infty$. Hence, we may assume $\rtm_{\Sigma}\ge 1$ without loss of generality.

\item[(3)] $x\in\overline C_y$ where $C_y$ is a facet of maximal dimension in $\Bd(G)$ containing $y$ in its closure: this is due to that the $G$-orbit of $x$ in $\Bd(G)$ intersects with $\overline C_y$ nontrivially.

\end{enumerate}
The following is a consequence of (3):

\begin{enumerate}

\item[(3)$^\prime$] $\gamma\in G_{x}$ since $\gamma\in T^\gamma_{0^+}\subset G_{y,0^+}\subset G_x$. Moreover, $G_{y,\rtm}\subset G_x$ for any $r>0$ and $G_{y,r^+}\supset G_{x,(r+1)^+}\supset G_{x,\urtm^+}$. where $\urtm:=\lceil r+1\rceil\in\Z$.

\end{enumerate}

\subsection{Mackey's theorem for compact induction}\label{sub:Mackey} Besides the local constancy of characters, we are going to need the classical Mackey's theorem in the context of compactly induced representations.

\begin{lem}\label{l:Mackey}
  Let $J\subset G$ be an open compact mod center subgroup and $H\subset G$ a closed subgroup. Let $(J,\rho)$ and $(H,\tau)$ be smooth representations such that $\dim \rho<\infty$. Then
  \[\Hom_G(\cind^G_J \rho,\ind^G_H \tau)\simeq \bigoplus_{g\in H\backslash G/J} \Hom_{J\cap H^g}(\rho,\tau^g).\]
\end{lem}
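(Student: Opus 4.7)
The plan is to prove this by the standard double Frobenius reciprocity argument. The first step will be to apply Frobenius reciprocity for smooth induction (since $H$ is closed in $G$):
\[
\Hom_G(\cind^G_J \rho,\, \ind^G_H \tau) \simeq \Hom_H\bigl((\cind^G_J \rho)|_H,\, \tau\bigr).
\]
Next, I would decompose the $H$-representation $(\cind^G_J \rho)|_H$ geometrically using the $(H,J)$-double coset partition $G = \bigsqcup_{g} HgJ$ (with $g$ ranging over a fixed system of representatives of $H\backslash G/J$). For each fixed $g$, the subspace of elements of $\cind^G_J \rho$ whose support is contained in the single double coset $HgJ$ is preserved by right translation by $H$, and an element there is determined by its restriction to $Hg$, with the $(J,\rho)$-equivariance imposing precisely the left-equivariance $f(xhg) = {}^g\rho(ghg^{-1})f(xg)$ for $h \in g^{-1}Hg\cap J$. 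This identifies the summand with $\cind^H_{H\cap gJg^{-1}}({}^g\rho)$, where ${}^g\rho(x) := \rho(g^{-1}xg)$. Because elements of $\cind^G_J \rho$ have compact support modulo $J$, any such element is supported on only finitely many double cosets, so the decomposition is a genuine direct sum:
\[
(\cind^G_J \rho)|_H \;\simeq\; \bigoplus_{g\in H\backslash G/J} \cind^H_{H\cap gJg^{-1}}({}^g\rho).
\]

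Then I would apply Frobenius reciprocity for compact induction to each term:
\[
\Hom_H\bigl(\cind^H_{H\cap gJg^{-1}}({}^g\rho),\, \tau\bigr) \;\simeq\; \Hom_{H\cap gJg^{-1}}\bigl({}^g\rho,\, \tau|_{H\cap gJg^{-1}}\bigr).
\]
Conjugation by $g^{-1}$ is a group isomorphism $H\cap gJg^{-1} \xrightarrow{\sim} J\cap H^g$ which intertwines ${}^g\rho$ with $\rho$ and $\tau|_{H\cap gJg^{-1}}$ with $\tau^g|_{J\cap H^g}$, yielding the desired identification with $\Hom_{J\cap H^g}(\rho,\tau^g)$. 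Assembling everything gives the formula in the statement.

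The only subtle point—which I would flag briefly—is the distinction between direct sum and direct product when taking $\Hom$ out of the double-coset decomposition: $\Hom_H$ of an infinite direct sum is a direct product of the individual $\Hom$ spaces, not a direct sum, so the $\bigoplus$ in the statement must be read as a Cartesian product in general (or equivalently, as a direct sum once one notes that in the applications of interest the assumption $\dim\rho<\infty$ together with openness/compactness constraints forces all but finitely many components to vanish). No part of the argument is hard: the main effort lies in verifying the double-coset description of $(\cind^G_J\rho)|_H$ carefully, since conventions for left versus right actions and for ${}^g\rho$ versus $\rho^g$ must be tracked without error.
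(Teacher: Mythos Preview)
Your argument is correct and complete; the direct-sum versus direct-product caveat you flag is exactly the right observation (and applies equally to the paper's formulation).

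Your route is, however, organized differently from the paper's. The paper does not use Frobenius reciprocity twice with a Mackey restriction formula sandwiched in between. Instead it writes down in one stroke the space $S(\rho,\tau)$ of ``intertwining kernels'': functions $s:G\to \Hom_\C(V_\rho,V_\tau)$ satisfying $s(hgj)=\tau(h)\,s(g)\,\rho(j)$. The single map $\phi\mapsto s_\phi$, $s_\phi(g)v:=(\phi(f_v))(g)$, identifies $\Hom_G(\cind^G_J\rho,\ind^G_H\tau)$ with $S(\rho,\tau)$, and then one decomposes $S(\rho,\tau)$ by support on $H\backslash G/J$ and evaluates at the chosen representative to land in $\Hom_{J\cap H^g}(\rho,\tau^g)$. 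What this buys is a visibly canonical isomorphism with a single explicit formula, and no need to first establish the restriction formula $(\cind^G_J\rho)|_H\simeq\bigoplus_g \cind^H_{H\cap {}^gJ}({}^g\rho)$ as an intermediate step (in the paper that restriction formula is instead \emph{deduced} afterward, as a corollary, under the extra hypotheses that $H$ is open compact and $\cind^G_J\rho$ is admissible). Your approach has the complementary virtue of being entirely modular---two standard adjunctions and a support decomposition---so nothing beyond off-the-shelf Frobenius reciprocity needs to be checked.
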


  In fact it is a canonical isomorphism. A natural map will be constructed in the proof below.

\begin{proof}
  Since the details are in \cite{Kut77}, where a more general result is proved, we content ourselves with outlining the argument. Let $S(\rho,\tau)$ denote the space of functions $s:G\ra \End_{\C}(\rho,\tau)$ such that $s(hgj)=\tau(h)s(g)\rho(j)$ for all $h\in H$, $g\in G$, and $j\in J$. For each $g\in H\backslash G/J$ define $S_{\xo}(\rho,\tau)$ to be the subspace of $s\in S(\rho,\tau)$ such that $\supp s\subset HxJ$. Clearly $S(\rho,\tau)=\oplus_{g\in H\backslash G/J} S_{g}(\rho,\tau)$. For each $v\in \rho$ we associate $f_v\in S(\rho,\tau)$ such that $f_v(j)=\rho(j)v$ if $j\in J$ and $f_v(j)=0$ if $j\notin J$. We define a map
  \[\Hom_G(\cind^G_J \rho,\ind^G_H \tau)\ra S(\rho,\tau),\quad \phi\mapsto s_\phi\]
  such that $s_\phi(g)$ sends $v\in \rho$ to $(\phi(f_v))(g)$. We also have a map
  \[S_{g}(\rho,\tau)\ra \Hom_{J\cap H^g}(\rho,\tau^g),\quad s\mapsto s(g).\]
  (It is readily checked that $s(g)\in \Hom_{J\cap H^g}(\rho,\tau^g)$.) It is routine to check that the two displayed maps are isomorphisms.
\end{proof}

The following corollary was observed in \cite[Lem 4.1]{Nev13}.

\begin{cor}\label{c:Mackey}
  In the setup of Lemma \ref{l:Mackey}, further assume that $\cind^G_J \rho$ is admissible and that $H$ is an open compact subgroup. Then
  \[\left(\cind^G_J \rho\right)|_H \simeq \bigoplus_{g\in J\backslash G/H} \cind^H_{H\cap J^g } \rho^g.\]
\end{cor}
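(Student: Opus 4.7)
The plan is to prove the corollary by directly decomposing $\cind^G_J\rho$ according to the supports of its elements over $J\backslash G/H$. Since $H$ is open in $G$, each double coset $JgH$ is open, and any $f\in\cind^G_J\rho$ has support compact modulo $J$ on the left, so $\supp(f)$ meets only finitely many $JgH$. Writing $V_g$ for the subspace of $f\in\cind^G_J\rho$ supported in $JgH$, one obtains the internal vector-space decomposition $\cind^G_J\rho=\bigoplus_{g\in J\backslash G/H}V_g$. The right action of $H$ preserves $JgH$ and hence each $V_g$, so this is in fact a decomposition of smooth $H$-representations.

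For each $g$ I would identify $V_g\simeq \cind^H_{H\cap J^g}\rho^g$ via the evaluation map $\Phi_g(f)(h):=f(gh)$. For $k\in H\cap J^g$ the element $gkg^{-1}$ lies in $J$, so
\[
\Phi_g(f)(kh)=f(gkh)=f\bigl((gkg^{-1})\cdot gh\bigr)=\rho(gkg^{-1})\,\Phi_g(f)(h)=\rho^g(k)\,\Phi_g(f)(h),
\]
confirming that $\Phi_g(f)\in\cind^H_{H\cap J^g}\rho^g$ (the compactness of $H$ makes the support condition automatic). The inverse sends $\phi$ to the function $jgh\mapsto\rho(j)\phi(h)$, and well-definedness follows because any equality $j_1gh_1=j_2gh_2$ forces $k:=h_2h_1^{-1}\in H\cap J^g$ together with $j_2^{-1}j_1=gkg^{-1}$, whence the two candidate values agree. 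Right $H$-equivariance of $\Phi_g$ is immediate.

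The only step that demands genuine care is the bookkeeping of the double-coset relations used to show $\Phi_g$ is bijective; no deeper input enters, which is consistent with the statement being recorded as an observation in \cite{Nev13}. An alternative route is to deduce the corollary from Lemma~\ref{l:Mackey} abstractly: for any smooth $H$-representation $\tau$, combine the Frobenius reciprocities
\[
\Hom_H\bigl((\cind^G_J\rho)|_H,\tau\bigr)\simeq \Hom_G\bigl(\cind^G_J\rho,\Ind^G_H\tau\bigr),\qquad
\Hom_{H\cap J^g}(\rho^g,\tau)\simeq\Hom_H\bigl(\cind^H_{H\cap J^g}\rho^g,\tau\bigr)
\]
with Lemma~\ref{l:Mackey}, the bijection $g\mapsto g^{-1}$ between $H\backslash G/J$ and $J\backslash G/H$, and the identification $\Hom_{J\cap H^g}(\rho,\tau^g)\simeq\Hom_{H\cap J^{g^{-1}}}(\rho^{g^{-1}},\tau)$, then conclude by Yoneda in the semisimple category of smooth $H$-representations. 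The direct argument above is cleaner and avoids any appeal to representability.
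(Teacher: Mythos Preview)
Your proposal is correct. The paper's own proof is precisely your ``alternative route'': it takes an arbitrary (admissible) $H$-representation $\tau$, uses Frobenius reciprocity to pass to $\Hom_G(\cind^G_J\rho,\ind^G_H\tau)$, invokes Lemma~\ref{l:Mackey}, conjugates each summand by $g$, applies Frobenius reciprocity once more, reindexes via $g\mapsto g^{-1}$, and concludes by Yoneda. So your second argument matches the paper essentially line for line.

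Your primary argument, however, takes a genuinely different and more direct route. Decomposing $\cind^G_J\rho$ by supports over $J\backslash G/H$ and identifying each piece with $\cind^H_{H\cap J^g}\rho^g$ via the evaluation map is an explicit, constructive proof that never invokes Lemma~\ref{l:Mackey} or Yoneda. It has two advantages: it makes no use of the admissibility hypothesis on $\cind^G_J\rho$ (which the paper seems to need to run the Yoneda argument cleanly), and it produces a canonical isomorphism at the level of underlying spaces rather than only up to natural isomorphism of Hom-functors. The paper's approach, on the other hand, illustrates how the corollary is a formal consequence of the Mackey machinery already set up, which is presumably why the authors chose it.
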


\begin{proof}
  Let $(H,\tau)$ be an admissible representation. Then by Frobenius reciprociy and the preceding lemma,
  \[\Hom_H(\cind^G_J \rho,\tau)\simeq \Hom_G(\cind^G_J \rho,\ind^G_H \tau)\simeq\bigoplus_{g\in H\backslash G/J} \Hom_{J\cap H^g}(\rho,\tau^g).\]
  Further, through conjugation by $g$ and Frobenius reciprocity, the summand is isomorphic to
  \[\Hom_{J^{g^{-1}}\cap H}(\rho^{g^{-1}},\tau)
  \simeq \Hom_{H}\left(\cind^H_{J^{g^{-1}}\cap H}\rho^{g^{-1}},\tau\right).\]
  The proof is finished by replacing $g$ by $g^{-1}$ in the sum and applying Yoneda's lemma.
\end{proof}

\subsection{Main estimates} This subsection establishes the main estimates towards of the proof of Theorem \ref{t:asymptotic-char}. Let $\Sigma$ be a generic $G$-datum associated to $\pi$. That is, $\pi\simeq\pi_{\Sigma}$. In this entire subsection we keep conditions (1), (2) and (3) of \S\ref{rmk: reductions}.

Lemma \ref{l:bound-on-d} implies that $\fdeg(\pi)\rightarrow\infty$ is equivalent to $\vol_{G/Z}(J_{\Sigma})\rightarrow0$, which is in turn equivalent to $r_{\Sigma}\rightarrow\infty$. Henceforth we will often drop the subscript $\Sigma$ when the context is clear.

\begin{lem}\label{lem: loc const}
Let $\Sigma$, $\pi$, and $\gamma$ be as above. For simplicity, we write $r$ for $\rtm_\datum$.
Suppose
\[
\sd(\gamma)\le \frac r2.
\]
Then we have
\begin{equation}\label{eq: loc const}
\Theta_\pi(\gamma)=\mathrm{Tr}\left(\pi(\gamma)\left|V_\pi^{G_{y,r^+}}\right.\right).
\end{equation}
for any $y\in\Apt(T^\gamma)=\Bd(T^\gamma)$. Here $V_\pi^{G_{y,r^+}}$ is the space of $G_{y,r^+}$-invariants in $V_\pi$.
\end{lem}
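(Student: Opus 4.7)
The plan is to deduce the identity from two ingredients: (a) local constancy of $\Theta_\pi$ on a neighborhood of $\gamma$ of the form $\gamma G_{y,r^+}$, and (b) the standard projector identity that turns such local constancy into a trace on invariants.

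First, I would invoke the depth-of-$\pi$ computation recorded in Remark \ref{rmk: minimal type} together with reduction (1) of \S\ref{rmk: reductions}: since $\phi_d=1$, the depth of $\pi=\pi_\Sigma$ equals $r_{d-1}=r$. Thus for any $h\in G_{y,r^+}$ we have an equality of characters on $G_{y,r^+}$, and one may hope to exhibit $\Theta_\pi$ as constant on $\gamma G_{y,r^+}$. To get this I would appeal to the local constancy theorem of Adler--Korman \cite{AK07} (and its refinement in \cite{Meyer-Solleveld:growth}): if $\pi$ has depth $r$ and $\gamma\in G_{0^+}$ is regular semisimple, then $\Theta_\pi(\gamma h)=\Theta_\pi(\gamma)$ for all $h\in G_{y,s}$ with $s>\max\bigl(r,\tfrac{r}{2}+\sd(\gamma)\bigr)$ and any $y\in\Apt(T^\gamma)$. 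Under our hypothesis $\sd(\gamma)\le r/2$, both arguments of the maximum are $\le r$, so the condition reduces to $s>r$, i.e.\ the constancy holds for $h\in G_{y,r^+}$. This gives
\[
\Theta_\pi(\gamma h)=\Theta_\pi(\gamma),\qquad \forall h\in G_{y,r^+}.
\]

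Second, I would convert this into the claimed formula by integrating against the normalized characteristic function of $G_{y,r^+}$. Setting $K:=G_{y,r^+}$,
\[
\Theta_\pi(\gamma)=\frac{1}{\vol(K)}\int_{K}\Theta_\pi(\gamma h)\,dh=\frac{1}{\vol(K)}\langle\Theta_\pi,\mathbf{1}_{\gamma K}\rangle=\frac{1}{\vol(K)}\Mtr\!\left(\pi(\mathbf{1}_{\gamma K})\right).
\]
Writing $\pi(\mathbf{1}_{\gamma K})=\pi(\gamma)\cdot\pi(\mathbf{1}_K)$ and noting that $\vol(K)^{-1}\pi(\mathbf{1}_K)$ is the orthogonal projector $P_K\colon V_\pi\twoheadrightarrow V_\pi^{K}$, one obtains
\[
\Theta_\pi(\gamma)=\Mtr\!\bigl(\pi(\gamma)\,P_K\bigr)=\Mtr\!\left(\pi(\gamma)\,\bigl|\,V_\pi^{G_{y,r^+}}\right),
\]
which is the desired identity. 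The trace on the finite-dimensional space $V_\pi^{K}$ makes sense because $\pi$ is admissible and $\pi(\gamma)$ preserves $V_\pi^{K}$: indeed, $\gamma$ normalizes $K=G_{y,r^+}$ since $\gamma\in T^\gamma\subset \bG(k)$ fixes $y\in\Apt(T^\gamma)$ and preserves the Moy--Prasad filtration at $y$.

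The only delicate point — and the step where care is required — is citing the correct form of the local constancy theorem with the precise numerical threshold $\sd(\gamma)\le r/2$; the remainder is a formal manipulation. I would make sure the statement of \cite[Thm.~4.2]{Meyer-Solleveld:growth} or the corresponding result in \cite{AK07} is applied with the normalization of $\sd(\gamma)$ from Definition \ref{d:singular-depth}, and verify that the compactness of $\gamma$ (automatic here since $\gamma\in G_{0^+}$) is built into the hypotheses.
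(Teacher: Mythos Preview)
Your proposal is correct and follows essentially the same approach as the paper: invoke the Adler--Korman/Meyer--Solleveld local constancy result to get $\Theta_\pi$ constant on $\gamma G_{y,r^+}$, then apply the standard projector identity. The paper states the threshold as $a>\max(2\sd(\gamma),\rho(\pi))$ (from \cite[Cor.~12.9]{AK07}) rather than your $\max(r,\tfrac{r}{2}+\sd(\gamma))$, but under the hypothesis $\sd(\gamma)\le r/2$ both reduce to $a>r$, and you already flagged this numerical citation as the point to verify carefully.
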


\begin{remark}
Recall by \S\ref{rmk: reductions}-(2), the depth of $\pi_\datum$ is $r=r_\datum$. Note that $\gamma \in G_{[y]}$. Hence, $\gamma$ normalizes $G_{y,r^+}$, and the right-hand side of the formula in \eqref{eq: loc const} is well-defined.
\end{remark}

\begin{proof}
Given a subset $X\subset G$, let $\ch_{X}$ denote the characteristic function of $X$.
By \cite[Cor 12.9]{AK07} (see also \cite{Meyer-Solleveld:growth}) $\Theta_\pi$ is constant on $\gamma G_{y,r^+}\subset \,^{G_{y,0^+}}\!\left(\gamma T^\gamma_{\rtm^+}\right)$.\footnote{Since $\gamma$ is regular, the summation in \cite[Cor 12.9]{AK07} runs over no nilpotent elements other than $0$. So the corollary tells us that $\Theta_\pi(\gamma')$ is equal to a constant $c_0$ for all $\gamma'$ in the $G$-conjugacy orbit of $\gamma+T^\gamma_a$ for $a>\max(2\sd(\gamma),\rho(\pi))$, where $\rho(\pi)$ denotes the depth of $\pi$, which is $\rtm$. For $y\in \Apt(T^\gamma)$, we have $G_{y,a}$ contained in the $G$-orbit of $\gamma+T_a$. In our case $\max(2\sd(\gamma),\rho(\pi))=r$, thus $\Theta_\pi$ is indeed constant on $\gamma G_{y,r^+}$.} Thus we have
\[
\begin{array}{ll}
\Theta_\pi(\gamma)&=\frac1{\vol_{G}(G_{y,r^+})}\int_G\Theta_\pi(g)\ch_{\gamma G_{y,r^+}}dg \\
&=\frac1{\vol_{G}(G_{y,r^+})}\mathrm{Tr}(\pi(\ch_{\gamma G_{y,r^+}}))
=\mathrm{Tr}\left(\pi(\gamma)\pi\left(\frac{\ch_{G_{y,r^+}}}{\vol_{G}(G_{y,r^+})}\right)\right)\\
&=\mathrm{Tr}\left(\pi(\gamma)|V_\pi^{G_{y,r^+}}\right).
\end{array}
\]
The last equality follows from the fact that $\pi
\left(\frac{\ch_{G_{y,r^+}}}{\vol_{G}(G_{y,r^+})}\right)$ is the projection of $V_\pi$ onto $V_\pi^{G_{y,r^+}}$.
\end{proof}

Our aim is to prove Proposition \ref{prop: main} below using Lemma \ref{lem: loc const}. Recall $\yo\in \Apt(T^\gamma)$ is fixed. If $V_\pi^{G_{y,r^+}}=0$, we have $\Theta_\pi(\gamma)=0$. Hence, from now on, we assume that $V_\pi^{G_{y,r^+}}\neq0$ without loss of generality.
In the following series of lemmas, we first describe the space $V_\pi^{G_{y,r^+}}$.
The following result is originally due to Jacquet \cite{Jac71}.

\begin{lem}\label{lem:Mackey2}  Let $J$ be an open compact mod center subgroup of $G$ and $\rho$ an irreducible representation of $J$ such that $\pi=\cind_{J}^G\rho$ is irreducible (thus supercuspidal). Then, for any nontrivial unipotent subgroup $N$ of $G$, we have $V_\rho^{N\cap J}=0$.
\end{lem}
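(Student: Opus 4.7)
The plan is to derive a contradiction from the supercuspidality of $\pi$, following Jacquet's original argument. The key input is the Jacquet--Casselman criterion: since $N$ is the unipotent radical of a proper parabolic and $\pi$ is supercuspidal, $V_\pi = V_\pi(N)$; equivalently, for every $w \in V_\pi$ there exists a compact open subgroup $N_0 \subset N$ such that $\int_{N_0} \pi(n) w \, dn = 0$. Moreover, this vanishing persists upon enlarging $N_0$ to any compact open subgroup of $N$ containing it, since the larger group decomposes into finitely many cosets of $N_0$.

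First I would recall the canonical $J$-equivariant embedding $V_\rho \hookrightarrow V_\pi = \cind_J^G \rho$ sending $v$ to the function $f_v \in V_\pi$ supported on $J$ with $f_v(j) = \rho(j) v$ for $j \in J$ (the analogue of the $f_v$ appearing in the proof of Lemma \ref{l:Mackey}). Since $J$ is open in $G$, the intersection $N \cap J$ is an open compact subgroup of $N$ and hence carries a positive Haar measure.

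Assume for contradiction that some $v \in V_\rho^{N \cap J}$ is nonzero. Applying the Jacquet--Casselman criterion to $w = f_v$ and enlarging if necessary, we obtain a compact open subgroup $N_0 \subset N$ with $N \cap J \subset N_0$ and $\int_{N_0} \pi(n) f_v \, dn = 0$. Evaluating this identity at the identity element of $G$, the integral collapses (because $f_v$ is supported on $J$) to
\[
0 = \int_{N_0 \cap J} \rho(n) v \, dn = \int_{N \cap J} v \, dn = \vol(N \cap J)\, v,
\]
using $N_0 \cap J = N \cap J$ (since $N \cap J \subset N_0 \subset N$) and the $N \cap J$-invariance of $v$. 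As $\vol(N \cap J) > 0$, this contradicts $v \neq 0$.

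There is no substantive obstacle: the argument is a short direct consequence of the Jacquet--Casselman characterization of supercuspidality combined with the pointwise evaluation of the canonical embedding $v \mapsto f_v$. The only technicality worth flagging is the freedom to enlarge $N_0$ to contain $N \cap J$, which is justified by the coset decomposition observation recalled above.
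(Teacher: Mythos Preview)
Your proof is correct and follows Jacquet's original hands-on argument. The paper takes a different route: it applies Frobenius reciprocity together with the Mackey decomposition of Lemma~\ref{l:Mackey} to obtain
\[
0=\Hom_N(\pi,1_N)=\Hom_G(\cind_J^G\rho,\Ind_N^G 1_N)\simeq\bigoplus_{g\in N\backslash G/J}\Hom_{J\cap N^g}(\rho,1_{N^g}),
\]
and then reads off the vanishing of the identity-coset summand. Your argument bypasses the Mackey machinery entirely, working directly in the explicit model of $\cind_J^G\rho$ and evaluating the averaging integral at the identity; this is more self-contained, whereas the paper's version is a one-liner precisely because Lemma~\ref{l:Mackey} is already available. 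Both rest on the same underlying input---vanishing of the Jacquet module $\pi_N$---expressed in your case via the integral criterion and in the paper's case via $\Hom_N(\pi,1_N)=0$.
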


\proof
Applying Frobenius reciprocity and Lemma \ref{l:Mackey} with $H=N$,
\[
0=\Hom_N(\pi,1_N)=\Hom_G(\cind_{J}^G \rho, \Ind_N^G1_N)
\simeq \oplus_{g\in N\backslash G/J}\Hom_{J\cap N^g}(\rho, 1_{N^g}).
\]
\qed

Let $J:=J_{\Sigma}$ and $\rho:=\rho_\Sigma$. We deduce from Corollary~\ref{c:Mackey} that
\begin{equation}\label{Vpi-dec}
\mathrm{Res}_{G_{\xo}}\cind_J^G\rho\simeq\,\bigoplus_{g\in G_{\xo}\backslash G/J} \Ind_{G_{\xo}\cap \,^g\!J}^{G_{\xo}}\,^g\!\rho.
\end{equation}

\begin{defrmk} \

\begin{enumerate}
\item[(1)]
Define
\[\cX_\Sigma':=\left\{g\in G\mid G_{g^{-1}\xo, \urtm^+}\cap N\supset G_{\xo}\cap N\textrm{ for some unipotent subgroup }N\neq \{1\}\right\}\]
and
\[
\cX_\Sigma:=G-\cX_\Sigma'.
\]
We observe that
\begin{enumerate}
\item $G_{\xo}\cap N=G_{[\xo]}\cap N\supset J\cap N$ for any unipotent subgroup $N$, and
\item $\cX_\Sigma'$, $\cX_\Sigma$ are left and right $G_{[\xo]}$-invariant.
\end{enumerate}
\item[(2)]
Suppose $\rtm$ is sufficiently large so that $G_{\yo,\rtm^+}\subset G_{\xo}$. Set
\[
\cX_\Sigma^\circ:=\left\{g\in G\left| \left(\Ind_{G_{\xo}\cap \,^g\!J}^{G_{\xo}}\,^g\!\rho\right)^{G_{\yo,\rtm^+}}\neq 0\right.\right\}.
\]
In this case, from Lemma \ref{lem: loc const} and the above definition it is clear that
\begin{equation}\label{e:loc-const-refined}
\Theta_\pi(\gamma)=\sum_{g\in G_{\xo}\left\backslash\cX_\Sigma^\circ\right/J}\Tr\left(\pi(\gamma)\left| \left(\Ind_{G_{\xo}\cap \,^g\!J}^{G_{\xo}}\,^g\!\rho\right)^{G_{\yo,\rtm^+}}\right.\right).
\end{equation}
Note that $\cX_\Sigma^\circ$ is right $J$-invariant, and also left $G_{\xo}$-invariant.
\end{enumerate}
\end{defrmk}

\begin{lem}\label{l:no-inv-in-ind} 
	If $g\in \cX'_\Sigma$, then the space $\Ind_{G_{\xo}\cap\,^g\!J}^{G_{\xo}} {}^g\!\rho$ has no nonzero $G_{\yo,r^+}$-invariant vector. That is, $\cX_\Sigma^\circ\cap \cX'_\Sigma=\emptyset$, or equivalently $\cX_\Sigma^\circ\subset\cX_\Sigma$.
\end{lem}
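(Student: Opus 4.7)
The plan is to combine a Mackey-type decomposition of the $G_{y,r^+}$-invariants in $W_g := \Ind_{G_{\xo} \cap {}^g J}^{G_{\xo}} {}^g\rho$ with Jacquet's Lemma~\ref{lem:Mackey2}. Since $G_{y,r^+} \subset G_{\xo}$ by \S\ref{rmk: reductions}(3)$^\prime$, right translation equips $W_g$ with a $G_{y,r^+}$-action; parametrizing sections by their values on a set of representatives for $(G_{\xo} \cap {}^g J) \backslash G_{\xo} / G_{y,r^+}$ then yields the standard identification
\[
W_g^{G_{y,r^+}} \;\cong\; \bigoplus_{h \in (G_{\xo} \cap {}^g J) \backslash G_{\xo} / G_{y,r^+}} V_{{}^g\rho}^{(G_{\xo} \cap {}^g J)\, \cap\, h G_{y,r^+} h^{-1}}.
\]
It therefore suffices to show that each summand vanishes.

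To kill the $h$-summand uniformly in $h$, I would produce a nontrivial unipotent subgroup $N$ of $G$ (independent of $h$) such that ${}^g N \cap {}^g J \subset (G_{\xo} \cap {}^g J) \cap h G_{y,r^+} h^{-1}$. Jacquet's Lemma applied to $\rho$ then gives $V_{{}^g\rho}^{{}^g N \cap {}^g J} = V_\rho^{N \cap J} = 0$, so a fortiori the summand is zero. I will take $N$ to be the nontrivial unipotent subgroup supplied by the hypothesis $g \in \cX'_\Sigma$, namely one with $G_{\xo} \cap N \subset G_{g^{-1}\xo, \urtm^+}$.

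The required containment splits into two checks. First, using $J \subset G_{[\xo]} \subset G_{\xo}$ from Yu's construction, $N \cap J \subset G_{\xo} \cap N \subset G_{g^{-1}\xo, \urtm^+}$; conjugating by $g$ yields ${}^g N \cap {}^g J \subset G_{\xo, \urtm^+} \subset G_{\xo}$, which already covers the first factor of the target. Second, since $h \in G_{\xo}$ normalizes the Moy--Prasad subgroup $G_{\xo, \urtm^+}$, and since $G_{\xo, \urtm^+} \subset G_{y,r^+}$ by \S\ref{rmk: reductions}(3)$^\prime$, we have $G_{\xo, \urtm^+} = h G_{\xo, \urtm^+} h^{-1} \subset h G_{y,r^+} h^{-1}$. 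Chaining the two inclusions delivers ${}^g N \cap {}^g J \subset (G_{\xo} \cap {}^g J) \cap h G_{y,r^+} h^{-1}$, as desired.

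The only mild obstacle I anticipate is the bookkeeping of left versus right actions and conjugation conventions in the Mackey decomposition; once that is in place, the unipotency condition passes through conjugation by $g$, and the uniformity in $h$ is automatic because elements of $G_{\xo}$ normalize $G_{\xo, \urtm^+}$, which keeps the argument from fragmenting into a case analysis over double cosets.
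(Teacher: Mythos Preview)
Your argument is essentially the same as the paper's: both use a Mackey decomposition of the $G_{y,r^+}$-invariants, reduce each summand to an invariant space for a subgroup containing (a conjugate of) $J\cap N$, and conclude via Lemma~\ref{lem:Mackey2}. The only cosmetic difference is that you work on the ${}^g J$ side and use that $h\in G_x$ normalizes $G_{x,\urtm^+}$, whereas the paper conjugates back to $J$ and tracks the point $h^{-1}y$; these are equivalent.

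One small correction: the chain $J\subset G_{[x]}\subset G_x$ is wrong in the second inclusion, since $G_x\subset G_{[x]}$ rather than the reverse. What you actually need, and what holds, is $J\cap N\subset G_{[x]}\cap N = G_x\cap N$ (this equality is recorded in the remark just before the lemma). With that fix your argument goes through unchanged.
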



\begin{proof}
Recall that $G_{y,r}\subset G_x$ (see \S\ref{rmk: reductions}) and hence $G_{y,r^+}\subset G_x$.
Another application of~Mackey's formula yields
\[
\mathrm{Res}_{G_{\yo,r^+}}\Ind_{G_{\xo}\cap \,^g\!J}^{G_{\xo}}\,^g\!\rho\simeq\bigoplus_{h\in G_{\yo,r^+}\backslash G_{\xo}/G_{\xo}\cap \, ^{g}\!J} \Ind_{G_{\yo,r^+}\cap \, ^{hg}\!J}^{G_{\yo,r^+}}\, ^{hg}\!\rho.
\]
(This is derived from the formula in representation theory of finite groups since $[G_{\xo}:G_{\xo}\cap \,gJg^{-1}]$ and $\dim \rho$ are finite. We do not need Corollary \ref{c:Mackey}.) By Frobenius reciprocity and conjugation by $hg$, we obtain
\[\Hom_{G_{\yo,r^+}}\left(1,\Ind_{G_{\yo,r^+}\cap \, ^{hg}\!J}^{G_{\yo,r^+}}\, ^{hg}\!\rho\right)\simeq \Hom_{G_{\yo,r^+}\cap \, ^{hg}\! J}\left(1,\, ^{hg}\!\rho\right)\simeq
\Hom_{G_{g^{-1}h^{-1}y,r^+}\cap J}(1,\rho).\]
Since $G_{\yo,\rtm^+}\supset G_{\xo,\urtm^+}$ and $h\in G_{\xo}$, we have $G_{h^{-1}\yo,\rtm^+}\supset
G_{h^{-1}\xo,\urtm^+}=G_{\xo,\urtm^+}$ and thus $G_{g^{-1}h^{-1}\yo,\rtm^+}\supset
G_{g^{-1}\xo,\urtm^+}$.  It suffices to verify that the last Hom space is zero. If $g\in  \cX_\Sigma'$,  then $G_{g^{-1}\xo,\urtm^+}\cap N \supset J\cap N$ for some $N$ and thus $G_{g^{-1}h^{-1}\yo,\rtm^+}\cap J \supset J\cap N$. This and Lemma~\ref{lem:Mackey2} imply that the Hom space indeed vanishes.
\end{proof}


For simplicity, we will write $\cX^\circ$, $\cX$ and $\cX'$ for $\cX_\Sigma^\circ$, $\cX_\Sigma$ and $\cX'_\Sigma$ when the context is clear. For the purpose of our character computation, it is natural to estimate the cardinality of $G_{x}\bs \cX^\circ_\Sigma/J$ in view of \eqref{e:loc-const-refined}. Instead we bound the size of $G_{x}\bs \cX_\Sigma/J$, which is larger by the preceding lemma but easier to control.
To this end we begin by setting up some notation for the Cartan and Iwahori decompositions.

\begin{numbering}{\it Notation.}\rm \
Let $\mathbf T^0\subset\mathbf G^0$ and $\mathbf T\subset\mathbf G$ be maximal and maximally $k$-split tori such that $x\in\mathcal A(\mathbf T^0, k)\subset\mathcal A(T):=\mathcal A(\mathbf T, \mathbf G, k)\subset \Bd(G)$. Let $C$ be a facet of maximal dimension in $\mathcal A(T)$ with $x\in \overline C$. Let $\Delta$ be the set of simple $T$-roots associated to $C$, and $N_\Delta$ the maximal unipotent subgroup with simple roots $\Delta$.
Let $G_{C}\subset G_{\xo}$ be the Iwahori subgroup fixing $C$ so that $G=G_{C} N_G(T)G_{C}$ and $G_{z}$ be a special maximal parahoric subgroup with $z\in\overline C$ such that $G_{C}\subset G_{z}$. Note that elements in $W:=N_G(T)/C_G(T)$ can be lifted to elements in $G_{z}$. Let $T^-:=\{t\in T\mid  tUt^{-1}\subset U \textrm{ for any open subgroup}~U~\textrm{in }N_\Delta\}$  so that we have a Cartan decomposition $G=G_{z}T^-G_{z}$.

\end{numbering}

\begin{lem}\label{lem: X_0-1} 
Let $T^-(\urtm):=\{t\in T^-\mid 1\le|\alpha(t^{-1})|\le q^{\urtm+2},\ \alpha\in\Delta\}$. Then
\begin{equation}\label{pfGzX}
	\cX^\circ\subset\cX \subset G_{z} T^-(\urtm) G_{z}
\end{equation}
\end{lem}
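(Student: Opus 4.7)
The first inclusion $\cX_\Sigma^\circ \subset \cX_\Sigma$ is Lemma~\ref{l:no-inv-in-ind}. For the second inclusion $\cX_\Sigma \subset G_z T^-(\urtm) G_z$, I plan to argue the contrapositive via the Cartan decomposition: given $g = k_1 t k_2$ with $k_1, k_2 \in G_z$ and $t \in T^-$, supposing $t \notin T^-(\urtm)$, I construct a nontrivial unipotent subgroup $N$ such that $G_x \cap N \subset G_{g^{-1}x, \urtm^+}$, which places $g \in \cX_\Sigma'$.

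First, applying the defining condition of $T^-$ to $U_\alpha \cap G_C$ for each simple root $\alpha \in \Delta$ automatically yields $\nu(\alpha(t)) \ge 0$, i.e.\ $|\alpha(t^{-1})| \ge 1$. Hence the failure $t \notin T^-(\urtm)$ must come from some $\alpha_0 \in \Delta$ with $\nu(\alpha_0(t)) > \urtm + 2$. I would then take $P_{\alpha_0}$ to be the maximal standard parabolic of $\bG$ whose Levi omits $\alpha_0$, with unipotent radical $U_{P_{\alpha_0}} = \prod_\beta U_\beta$ ranging over positive roots $\beta$ in which $\alpha_0$ appears with positive multiplicity. Expanding $\beta = \sum_i m_i\alpha_i$ and using $\nu(\alpha_i(t)) \ge 0$ for all $i$ together with $m_{\alpha_0} \ge 1$, each such $\beta$ satisfies $\nu(\beta(t)) \ge \nu(\alpha_0(t)) > \urtm + 2$. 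My candidate is $N := k_2^{-1} U_{P_{\alpha_0}} k_2$, the unipotent radical of the conjugate parabolic $k_2^{-1} P_{\alpha_0} k_2$, which is nontrivial.

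To verify $G_x \cap N \subset G_{g^{-1}x, \urtm^+}$, I would conjugate by $k_2$ to reduce the claim to $G_{k_2 x} \cap U_{P_{\alpha_0}} \subset G_{t^{-1} k_1^{-1} x, \urtm^+}$, and decompose $U_{P_{\alpha_0}}$ into its root-subgroup factors. By the Moy--Prasad formula $G_{y, s} \cap U_\beta = \{x_\beta(b) : \nu(b) \ge s - \beta(y)\}$ combined with the torus-translation identity $\beta(t^{-1} y) = \beta(y) + \nu(\beta(t))$, the inclusion reduces to root-wise inequalities
\[
\nu(\beta(t)) + \beta(k_1^{-1} x) - \beta(k_2 x) > \urtm
\]
for each $\beta$ contributing to $U_{P_{\alpha_0}}$. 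Since $k_1, k_2 \in G_z$ and $x \in \overline C$, the points $k_j x$ lie in the compact $G_z$-orbit of $x$; using the decomposition $G_z = \bigsqcup_{w \in W_z} G_C w G_C$ (with $G_C$ fixing $\overline C$ pointwise), they are represented, modulo $G_C$, by the finite orbit $W_z \cdot x \subset \mathcal A(T)$, where $W_z$ is the stabilizer of $z$ in the affine Weyl group. A uniform bound $|\beta(k_1^{-1} x) - \beta(k_2 x)| \le 2$ combined with $\nu(\beta(t)) > \urtm + 2$ then delivers the estimate.

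The main obstacle I anticipate is exactly this last step: since $k_1, k_2$ need not preserve the apartment $\mathcal A(T)$, the quantities $\beta(k_j x)$ require careful interpretation via transport by the affine Weyl group, and one must confirm that the explicit constant $2$ in the definition of $T^-(\urtm)$ is indeed enough. This amounts to tracking the finite $W_z$-orbit of $x$ in $\overline C$ together with how Moy--Prasad filtrations transport under $G_z$-conjugation; once this is settled, what remains is bookkeeping with root-subgroup decompositions.
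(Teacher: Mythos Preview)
Your overall strategy is the same as the paper's: argue the contrapositive, write $g=k_1tk_2$ with $k_i\in G_z$ and $t\in T^-$, pick a simple root $\alpha_0$ with $|\alpha_0(t^{-1})|>q^{\urtm+2}$, and exhibit $N=\,^{k_2^{-1}}\!U_{P_{\alpha_0}}$ as the witnessing unipotent subgroup. Your choice of $U_{P_{\alpha_0}}$ (unipotent radical of the maximal parabolic omitting $\alpha_0$) and the observation that every root $\beta$ occurring in it has $\nu(\beta(t))\ge\nu(\alpha_0(t))>\urtm+2$ are exactly right.

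The difference is in how the ``obstacle'' you flag is handled. The paper does \emph{not} try to make sense of $\beta(k_jx)$ or to bound $|\beta(k_1^{-1}x)-\beta(k_2x)|$ via a $W_z$-orbit argument. Instead it factors the inclusion through the special vertex $z$, which is \emph{fixed} by $G_z$. Concretely, one first shows $g\in\cX'(z,\urtm+1)$: since $k_1^{-1}z=z=k_2z$, one has $G_{g^{-1}z,(\urtm+1)^+}=\,^{k_2^{-1}}G_{t^{-1}z,(\urtm+1)^+}$, and the required root-group inclusion $G_{t^{-1}z,(\urtm+1)^+}\cap U_{P_{\alpha_0}}\supset G_z\cap U_{P_{\alpha_0}}$ is a clean apartment computation (both $z$ and $t^{-1}z$ lie in $\Apt(T)$). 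Then one shows $\cX'(z,\urtm+1)\subset\cX'(x,\urtm)=\cX'$ using only that $x,z\in\overline C$: this gives $G_{g^{-1}z,(\urtm+1)^+}\subset G_{g^{-1}x,\urtm^+}$ and $G_x\cap N\subset G_z\cap N$, which together push the witness from $z$ to $x$. The two index-shifts of $1$ (one in each of these containments) are what account for the constant $2$ in $\urtm+2$.

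Your $W_z$-orbit approach could in principle be completed, but writing $k_j=c_1wc_2$ with $c_i\in G_C$ and $w\in N_G(T)\cap G_z$ still leaves you with $c_1(wx)$, and $wx$ need not lie in $\overline C$, so $c_1$ need not fix it; you would have to track how $G_{c_1wx,r}\cap U_\beta$ compares to $G_{wx,r}\cap U_\beta$. Passing through $z$ eliminates this entirely and makes the constant transparent.
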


\proof
The first inclusion is from Lemma \ref{l:no-inv-in-ind}. For the second inclusion,
for $v\in\Bd(G)$ and $a\in\bbR_{\ge0}$, let
\begin{align*}
\cX'(v,a)&=\left\{g\in G\mid G_{g^{-1}v, a^+}\cap N\supset G_{v}\cap N\textrm{ for some unipotent subgroup }N\neq \{1\}\right\} \\
\cX(v,a)&=G-\cX'(v,a).
\end{align*}

It is enough to show that
\begin{equation}\label{eq:inclusions}
\left(G-G_{z}T^-(\urtm)G_{z}\right)\subset\cX'(z,\urtm+1)\subset\cX'(x,\urtm)=\cX'.
\end{equation}

For the second inclusion in $(\ref{eq:inclusions})$, let $g\in \cX'(z,\urtm+1)$. Since $\xo,z\in\overline C$ and $G_{g^{-1}z,(\urtm+1)^+}\cap N\supset G_{z}\cap N$, we have $G_{g^{-1}x,\urtm^+}\supset G_{g^{-1}z,(\urtm +1)^+}\supset G_{g^{-1}z,(\urtm+1)^+}\cap N\supset G_{z}\cap N\supset G_{\xo}\cap N$. Hence, $g\in\cX'(x,\urtm)$.

For the first inclusion in $(\ref{eq:inclusions})$, let $g=g_1t'g_2\in G-G_{z}T^-(\urtm)G_{z}$ with $g_i\in G_{z}$ and $t'\in T-T^-(\urtm)$. Then, there is $\alpha\in \Delta$ such that $|\alpha(t^{\prime-1})|>q^{\urtm+2}$. Let $N_\alpha$ be the maximal unipotent subgroup associated to $\alpha$. Then,
\[G_{t^{\prime-1}z, (\urtm+1)^+}\cap N_\alpha\supset G_{z}\cap N_\alpha.\]
Since we have $G_{g^{-1}z,(\urtm+1)^+}=G_{g_2^{-1}t^{\prime-1}z,(\urtm+1)^+}={}\,^{g_2^{-1}}G_{t^{\prime-1}z, (\urtm+1)^+}$ and $G_{g_2 z}=G_{z}$, we have \[G_{g^{-1}z,(\urtm+1)^+}\cap \,^{g_2^{-1}}\!N_\alpha\supset G_{z}\cap \,^{g_2^{-1}}\!N_\alpha, \quad\mbox{thus}\quad G_{t^{\prime-1}z,(\urtm+1)^+} \cap N_\alpha \supset G_{z} \cap N_\alpha.\]
We conclude that $g\in \cX'(z,\urtm+1)$.
\qed

\

To give another description of $\cX^\circ$, we define compact mod center sets $S_{\xo,\yo} \subset \mathcal S_{\xo,\yo}$ as follows:
\[
S_{\xo,\yo}:=G_{[\yo]}S_{\xo} G_{\yo,0^+},\qquad \mathcal S_{\xo,\yo}:=G_{x}S_{\xo,\yo} G_{\xo}.
\]In particular the quotient $(Z_GG_{\xo})\backslash \mathcal S_{\xo,\yo}$ is finite.
Recall that $S_{\xo}$ is the set constructed in the proof of Lemma \ref{lem: good type}.

\begin{lem}\label{lem: X_0-2} Suppose $\HypB$ and $\HypGT$ are valid.  Then, for any double coset $G_{x} gJ\subset \cX^\circ$, we have
\[
G_{x} g G_{x}\cap S_{\xo,\yo} \,G^{d-1} G_{\xo,0^+}\neq \emptyset.
\]
\end{lem}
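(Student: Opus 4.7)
The plan is to extract a character of $G_{\yo,r}$ (with $r:=r_{d-1}=r_\Sigma$) appearing in the nonzero invariant space $V:=\bigl(\Ind_{G_{\xo}\cap\,^{g}\!J}^{G_{\xo}}\,^{g}\!\rho\bigr)^{G_{\yo,r^+}}$ and then describe it in two ways: once via Lemma~\ref{lem: good type}(2) applied to $\pi_\Sigma$, and once via Mackey--Frobenius matching against $^{g}\rho$. Comparing the two descriptions will place $g$ inside the required double coset. Throughout, I rely on Remark~\ref{rmk: minimal type}: the pair $(G_{\xo,r},\phi_{d-1})$ is a good minimal $K$-type of $\pi_\Sigma$ whose associated good element $X\in\lieG^{d-1}\cap\lieG_{\xo,-r}$ has connected centralizer $\bG^{d-1}$. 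Consequently the ambient $G'$ in Lemma~\ref{lem: good type} equals $G^{d-1}$, which is the same group used to build $S_\xo$, and therefore $S_{\xo,\yo}=G_{[\yo]}S_\xo G_{\yo,0^+}$.

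First I would observe that $G_{\yo,r}$ normalizes $G_{\yo,r^+}$ and acts on $V$ through the finite abelian quotient $G_{\yo,r}/G_{\yo,r^+}\cong\lieG_{\yo,r}/\lieG_{\yo,r^+}$, so $V$ decomposes as a direct sum of characters of $G_{\yo,r}$. Since $V$ embeds into $V_{\pi_\Sigma}^{G_{\yo,r^+}}$ as the Mackey summand indexed by $g$ in~\eqref{Vpi-dec}, Lemma~\ref{lem: good type}(2) applied to $\tau=\pi_\Sigma$ forces every such character $\chi'$ to be represented by a coset
\[
{}^{h'}(X+\eta')+\lieG_{\yo,(-r)^+},\qquad h'\in G_{[\yo]}S_\xo,\ \eta'\in\lieG^{d-1}_{(-r)^+}.
\]

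Next I would apply Mackey's formula once more to restrict $\Ind_{G_{\xo}\cap\,^{g}\!J}^{G_{\xo}}\,^{g}\!\rho$ from $G_{\xo}$ down to $G_{\yo,r}\subset G_{\xo}$ (the inclusion comes from \S\ref{rmk: reductions}). The character $\chi'$ must then appear in some summand $\Ind_{G_{\yo,r}\cap\,^{hg}\!J}^{G_{\yo,r}}\,^{hg}\!\rho$ with $h\in G_{\xo}$. By Yu's construction, the restriction of $\rho=\rho_\Sigma$ to $G_{\xo,r}\subset J$ is $\hat\phi_{d-1}|_{G_{\xo,r}}$-isotypic, which is the character represented by the good element $X$. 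Frobenius reciprocity therefore forces $\chi'$ to agree on the overlap $G_{\yo,r}\cap\,^{hg}\!G_{\xo,r}$ with the character represented by $^{hg}\!X$. Translating to dual cosets yields the congruence
\[
{}^{h'}(X+\eta')\ \equiv\ {}^{hg}X\pmod{\lieG_{\yo,(-r)^+}}
\]
after pulling both sides into a common coarse lattice.

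Finally I would upgrade this dual-coset congruence to a group-level inclusion. Since $X$ is good of depth $-r$ with $\bC_\bG(X)^{\circ}=\bG^{d-1}$, and both $\eta'$ and the error term lie in $\lieG^{d-1}$ at strictly deeper level, the standard refinement argument used inside the proof of Lemma~\ref{lem: good type}(2) (see also \cite[Lem.\,2.3.3]{Asym1}) produces $(h')^{-1}hg\in G^{d-1}\cdot G_{\xo,0^+}$. Rearranging gives
\[
g\in h^{-1}h'\,G^{d-1}\,G_{\xo,0^+}\ \subset\ G_{\xo}\,S_{\xo,\yo}\,G^{d-1}\,G_{\xo,0^+},
\]
which, together with $G_{\xo,0^+}\subset G_{\xo}$, shows that $G_{\xo}gG_{\xo}$ meets $S_{\xo,\yo}G^{d-1}G_{\xo,0^+}$. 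The hard part of this plan will be the last refinement step: converting agreement of characters on the (potentially small) overlap $G_{\yo,r}\cap\,^{hg}\!G_{\xo,r}$ into a genuine group-level inclusion in $G^{d-1}G_{\xo,0^+}$. It is precisely here that the hypotheses (HB) (identifying $\lieG$ with $\lieG^\ast$ so that the dual-coset/character correspondence is in force) and (HGT) (providing a good element with the prescribed connected centralizer $\bG^{d-1}$) do the essential work.
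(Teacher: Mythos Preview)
Your approach is essentially the same as the paper's: extract a $G_{\yo,r}$-character from the nonzero invariants, identify it via Lemma~\ref{lem: good type}(2) as ${}^{h'}(X+\eta')$, compare it through Mackey--Frobenius with the $^{g}\phi_{d-1}$-isotypy of $\rho|_{G_{\xo,r}}$, and then descend the resulting dual-coset intersection to a group-level containment in $S_{\xo,\yo}G^{d-1}G_{\xo,0^+}$. The paper simplifies bookkeeping by first absorbing your Mackey element $h\in G_\xo$ into $g$ (replacing $g$ by $\ell^{-1}g$), but this is cosmetic since the conclusion concerns $G_\xo g G_\xo$.

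The one place where your outline is imprecise is the final refinement step. Agreement of $\chi'$ with $^{hg}\phi_{d-1}$ on the overlap $G_{\yo,r}\cap\,^{hg}G_{\xo,r}$ translates to the \emph{intersection} $\bigl({}^{h'}(X+\eta')+\lieG_{\yo,(-r)^+}\bigr)\cap\,^{hg}\bigl(X+\lieG_{\xo,(-r)^+}\bigr)\neq\emptyset$, not to a congruence modulo a single lattice. To pass from this to the group-level statement the paper invokes \cite[Cor.~2.3.5]{Asym1} (which refines the intersection to $\lieG^{d-1}$-cosets on both sides, using that $X$ is good with centralizer $\bG^{d-1}$) and then \cite[Lem.~2.3.6]{Asym1} (which yields $(h')^{-1}hg\in G_{(h')^{-1}\yo,0^+}\,G^{d-1}\,G_{\xo,0^+}$). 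Your citation of \cite[Lem.~2.3.3]{Asym1} alone is not sufficient: that lemma only places points in $\Bd(G^{d-1})$ and does not by itself produce an element of $G^{d-1}G_{\xo,0^+}$. Note also the factor $G_{(h')^{-1}\yo,0^+}$ on the left, which you omitted; it is harmless since after left-multiplying by $h'\in G_{[\yo]}S_\xo$ it is absorbed into $G_{[\yo]}\subset S_{\xo,\yo}$.
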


\proof Since $G_{\yo,\rtm}\subset G_{\xo}$ by \S\ref{rmk: reductions},
Mackey's formula gives us, as in the proof of Lemma \ref{l:no-inv-in-ind}, that
\[
\mathrm{Res}_{G_{\yo,r}}\Ind_{G_{\xo}\cap \,^g\!J}^{G_{\xo}}\,^g\!\rho\simeq\bigoplus_{\ell\in G_{\yo,r}\backslash G_{\xo}/G_{\xo}\cap \, ^{g}\!J} \Ind_{G_{\yo,r}\cap \, ^{\ell g}\!J}^{G_{\yo,r}}\, ^{\ell g}\!\rho,
\]
Since $g\in\cX^\circ$ there is $\ell\in G_{x}$ such that
$\left( \Ind_{G_{\yo,r}\cap \, ^{\ell g}\!J}^{G_{\yo,r}}\, ^{\ell g}\!\rho\right)^{G_{\yo,\rtm^+}}\neq 0$. By replacing $g$ with $\ell^{-1} g$ if necessary, we may assume $\left( \Ind_{G_{\yo,r}\cap \, ^{g}\!J}^{G_{\yo,r}}\, ^{g}\!\rho\right)^{G_{\yo,\rtm^+}}\neq 0$.
Let $\Xs\in\mathfrak z_{\lieG^{d-1}}$ be a good element representing $\phi_{d-1}$. Let $(G_{\yo,r},\phi)$ be a minimal $K$-type appearing in $(\Ind_{G_{\yo,r}\cap \,^g\! J}^{G_{\yo,r}}\,\,^g\!\rho)^{G_{\yo,r^+}}$.
Then, by Lemma \ref{lem: good type}, there are $h\in G_{[\yo]}S_{\xo}$ and $\eta\in \lieG^{d-1}_{(-\rtm)^+}$ such that $\phi$ is represented by $^h(\Xs+\eta)$.  On the other hand, $G_{g\xo,\rtm}\subset\,^g\!J$ and $\,^g\!\rho|G_{gx,r}$ is a self direct sum of $\,^g\!\phi_{d-1}$. Therefore $\phi$ is a $(G_{\yo,r}\cap \,G_{g\xo,r})$-subrepresentation of such a self direct sum. This means that $\phi=\,^g\!\phi_{d-1}$ on $G_{\yo,r}\cap G_{g\xo,\rtm}$. Equivalently, $\left(\,^h\!(\Xs+\eta)+\lieG_{\yo,(-\rtm)^+}\right)\cap \,^g\!(\Xs+\lieG_{\xo,(-\rtm)^+})\neq\emptyset$ in terms of dual cosets. By \cite[Cor. 2.3.5]{Asym1}, this is in turn equivalent to $\,^{hG_{h^{-1}\xo,0^+}}\!(\Xs+\eta+\lieG^{d-1}_{h^{-1}\yo,(-\rtm)^+})\cap \,^{gG_{\xo,0^+}}\!(\Xs+\lieG^{d-1}_{\xo,(-\rtm)^+})\neq\emptyset$.
This implies $h^{-1}g\in G_{h^{-1}\yo,0^+}C_G(\Xs)G_{\xo,0^+}$ by \cite[Lem. 2.3.6]{Asym1}. Hence, $g\in G_{\yo,0^+}hC_G(\Xs)G_{\xo,0^+}$. It follows that $h^{-1}g\in G_{y,0^+}G^{d-1}G_{x,0^+}$. Hence, $g\in G_{[\yo]}S_{\xo}G_{y,0^+}G^{d-1}G_{x,0^+}\subset S_{\xo,\yo} G^{d-1}G_{x,0^+}$.  \qed

\

Observe that $G_zT^-(\urtm)G_z=G_CWT^-(\urtm)WG_C$, and $G_C\subset G_{\xo}$. Combining these with Lemmas \ref{lem: X_0-1} and \ref{lem: X_0-2}, \begin{equation}\label{eq:used-in-4.15}
\cX^\circ\subset (\mathcal S_{\xo,\yo}\,G^{d-1}G_{\xo})\cap (G_{\xo} W T^-(\urtm)W G_{\xo})
\end{equation}
 when 
$\HypB$ and $\HypGT$ are valid.
 We note that $\mathcal S_{\xo,\yo}$ depends only on $(G,G'=G^{d-1},\xo)$ and $y$.

\begin{prop}\label{p:JXGx}
The double coset space $G_{\xo} \backslash \cX^\circ/J$ is finite. More precisely, setting
$L_s:=G^{d-1}_{[\xo]} G_{\xo,s}$, we have
\begin{equation}\label{eq: bound}
|G_{\xo}\backslash \cX^\circ/J|\le \const_{x, G'}\cdot\sharp(W)^2 (\urtm+3)^{\abrank_G} [L_s:J]\cdot\vol_{G/Z_G}(L_s)^{-\frac12},
\end{equation}
for some constant $\const_{x,G'}>0$ (which may be chosen explicitly; see Lemma \ref{lem: first est} below).
\end{prop}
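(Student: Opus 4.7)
The plan is to combine the two descriptions of $\cX^\circ$ from \eqref{eq:used-in-4.15}: the Cartan constraint $\cX^\circ \subset G_{\xo} W T^-(\urtm) W G_{\xo}$ from Lemma \ref{lem: X_0-1} and the geometric constraint $\cX^\circ \subset \mathcal S_{\xo,\yo} G^{d-1} G_{\xo}$ from Lemma \ref{lem: X_0-2}, peeling off the five factors on the right-hand side of \eqref{eq: bound} one at a time.

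First, since $J \subset L_s$, each $(G_{\xo}, L_s)$-double coset in $\cX^\circ$ contains at most $[L_s : J]$ $(G_{\xo}, J)$-double cosets, so
\[
|G_{\xo} \backslash \cX^\circ / J| \le [L_s : J] \cdot |G_{\xo} \backslash \cX^\circ / L_s|.
\]
Because $\mathcal S_{\xo,\yo} = G_{\xo} S_{\xo,\yo} G_{\xo}$ with $S_{\xo,\yo}$ compact modulo $Z_G$ and depending only on $(\vec\bG, \xo, \yo)$, the quotient $(Z_G G_{\xo}) \backslash \mathcal S_{\xo,\yo}$ is finite; call its cardinality $\const^{(1)}_{\xo, G'}$. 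Thus every $(G_{\xo}, L_s)$-double coset in $\cX^\circ$ has a representative of the form $s g'$ with $s$ in a fixed set of $\const^{(1)}_{\xo,G'}$ representatives and $g' \in G^{d-1}$.

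For each such representative, the Cartan constraint forces $s g' \in G_{\xo} w_1 t w_2 G_{\xo}$ for some triple $(w_1, w_2, t) \in W \times W \times T^-(\urtm)$, contributing a factor $|W|^2 \cdot |T^-(\urtm)/T_0|$ to the count. The valuation constraint $1 \le |\alpha(t^{-1})| \le q^{\urtm+2}$ for each $\alpha \in \Delta$ gives $|T^-(\urtm)/T_0| \le (\urtm+3)^{|\Delta|} \le (\urtm+3)^{\abrank_G}$.

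Finally, with the Cartan data $(s, w_1, t, w_2)$ fixed, the number of distinct cosets $G_{\xo} s g' L_s$ as $g'$ ranges over $G^{d-1}/G^{d-1}_{[\xo]}$ (using $L_s \supset G^{d-1}_{[\xo]}$) and satisfies the Cartan constraint is controlled via an Iwahori factorization adapted to a maximal unipotent subgroup $N'$ of $G^{d-1}$; Lemma \ref{lem: inequality}(iii) provides
\[
[G_{\xo} \cap N' : L_s \cap N'] \le q^{\dim_k N} [G_{\xo} : L_s]^{1/2},
\]
and absorbing $q^{\dim_k N}$ and the fixed volume $\vol_{G/Z_G}(G_{\xo})^{1/2}$ into $\const_{\xo,G'}$ converts $[G_{\xo}: L_s]^{1/2}$ into the factor $\vol_{G/Z_G}(L_s)^{-1/2}$. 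The main technical obstacle is precisely this last step: identifying the correct unipotent $N'$ and showing that the fibers of the map $g' \mapsto G_{\xo} s g' L_s$ on $G^{d-1}/G^{d-1}_{[\xo]}$ are controlled by $(G_{\xo} \cap N')$-cosets, so that the square-root volume factor emerges from Lemma \ref{lem: inequality}(iii) without overcounting. Combining all the factors then yields \eqref{eq: bound}.
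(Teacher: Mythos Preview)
Your outline tracks the paper's proof closely: both use the Cartan inclusion of Lemma~\ref{lem: X_0-1} for the factor $\sharp(W)^2(\urtm+3)^{\abrank_G}$, the geometric inclusion of Lemma~\ref{lem: X_0-2} for the finite constant coming from $\mathcal S_{\xo,\yo}$, and Lemma~\ref{lem: inequality}(iii) for the square-root volume factor. The paper, however, orders the steps differently, and this matters.

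The paper first bounds $\sharp(G_{\xo}\backslash \cX^\circ/G_{[\xo]})\le (\sharp W)^2(\urtm+3)^{\abrank_G}$ directly from the Cartan inclusion (using that $N_G(T)/(C_G(T)_0 Z)=W(T^-/(C_G(T)_0 Z))W$), and only \emph{then} invokes Lemma~\ref{lem: first est} to bound $|G_{\xo}\backslash G_{\xo} g G_{[\xo]}/J|$ for each fixed representative $g$. Your ordering (first $[L_s:J]$, then $\mathcal S_{\xo,\yo}$, then Cartan) runs into a gap at your second step: you claim every $(G_{\xo},L_s)$-double coset in $\cX^\circ$ has a representative $sg'$ with $g'\in G^{d-1}$, but \eqref{eq:used-in-4.15} only gives $\cX^\circ\subset \mathcal S_{\xo,\yo}\,G^{d-1}G_{\xo}$, and the trailing $G_{\xo}$ cannot be absorbed into $L_s$ on the right since $L_s\subsetneq G_{\xo}$. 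The paper's ordering avoids this because $G_{\xo}\subset G_{[\xo]}$, so the trailing factor \emph{is} absorbed when one works modulo $G_{[\xo]}$ first.

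One further point on your step~4: the unipotent subgroup used is not a maximal unipotent of $G^{d-1}$ but rather a unipotent $U$ of $G$, and its choice depends on the representative. The paper reduces (via the Iwahori decomposition of $G^{d-1}$) to $w\in N_{G^{d-1}}(T^{d-1})$, writes $w=w_0 t$ with $t\in T^{d-1}_0$, and then takes $U$ to be Deligne's contracting unipotent associated to $t$. Decomposability of $G_{\xo,0^+}$ and $G^{d-1}_{\xo,0^+}G_{\xo,s}$ with respect to this $(L,U,\overline U)$ is what makes the coset count reduce to $[G_{\xo}\cap U:L_s\cap U]$, to which Lemma~\ref{lem: inequality}(iii) applies.
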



\proof
Note that each $T^-(\urtm)/(C_G(T)_0Z_G)$ is finite where $C_G(T)_0$ is the maximal parahoric subgroup of $C_G(T)$ and $\sharp \left(T^-(\urtm)/(C_G(T)_0Z_G)\right)\le (\urtm+3)^{\abrank_G}$. Hence, $\sharp \left(G_{z}\backslash\cX/(G_{[z]})\right)\le(\urtm+3)^{\abrank_G}$ by \eqref{pfGzX}.
Since $G_{C}N_G(T)G_{C}=G_{\xo}N_G(T)G_{\xo}=G_{z}T^-G_{z}$ and $N_G(T)/(C_G(T)_0Z)=W (T^-/(C_G(T)_0Z))W$, we have
\[\sharp(G_{\xo} \backslash \cX^\circ/G_{[\xo]})\le (\sharp W)^2 (\urtm+3)^{\abrank_G}. \]
  Since $|G_{\xo}\backslash \cX^\circ/J|=\sum_{\xo\in G_{\xo} \backslash \cX^\circ/G_{[\xo]}} \left|G_{\xo}\backslash G_{\xo}g G_{[\xo]}/J\right|$, the proof of \eqref{eq: bound} is completed by the following lemma:

%

\begin{lem}\label{lem: first est} 
If $G_{\xo}gJ\subset \cX^\circ$, then
\[
\left|G_{\xo}\backslash G_{\xo}g G_{[\xo]}/J\right|\le 
\const_{x, G'}\cdot[L_s:J]\,\vol_{G/Z_G}(L_s)^{-\frac12}
\]
where $\const_{x,G'}:=\vol_{G/Z_G}(G_{[x]})^{-1/2}\cdot\sharp\left(Z_GG_{\xo}\backslash \mathcal S_{\xo,\yo}\right) \cdot q^{(2\dim_k(G)+\dim_k(N))}\,\sharp\left(G_{[\xo]}/(Z_GG_{\xo})\right)$.
\end{lem}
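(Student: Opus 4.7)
The plan is to bound the double-coset count by four successive reductions, each peeling off one of the four constituent factors in the claimed bound. Throughout I use the inclusion $J\subset L_s\subset G_{[\xo]}$, which is immediate from the definitions $J=G^0_{[\xo]}G^1_{\xo,s_0}\cdots G^d_{\xo,s_{d-1}}$ and $L_s=G^{d-1}_{[\xo]}G_{\xo,s}$ together with $G^{d-1}_{[\xo]}\subset G_{[\xo]}$. Since $G_\xo gJ\subset\cX^\circ$ forces $g\in\cX^\circ$, Lemma \ref{lem: X_0-2} shows that $G_\xo gG_\xo$ meets $S_{\xo,\yo}G^{d-1}G_{\xo,0^+}$. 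After absorbing elements of $G_\xo$ on both sides (which does not alter $G_\xo gG_{[\xo]}$), I may assume $g=s g^{\prime}$ with $s\in S_{\xo,\yo}$ and $g^{\prime}\in G^{d-1}$. The element $s$ is determined only up to a left $Z_G G_\xo$-coset in $\mathcal S_{\xo,\yo}$, contributing the factor $\sharp(Z_G G_\xo\backslash\mathcal S_{\xo,\yo})$.

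Next, pick representatives $h_1,\ldots,h_{N_0}$ of $G_{[\xo]}/Z_G G_\xo$ with $N_0=\sharp(G_{[\xo]}/Z_G G_\xo)$. Since $G_\xo gG_{[\xo]}=\bigcup_iG_\xo gh_iG_\xo$,
\[
|G_\xo\backslash G_\xo gG_{[\xo]}/J|\le N_0\cdot\max_i|G_\xo\backslash G_\xo gh_iG_\xo/J|.
\]
Using $J\subset L_s$, each $(G_\xo,L_s)$-double coset in $G_\xo g h_i G_\xo$ contains at most $[L_s:J]$ $(G_\xo,J)$-double cosets, so
\[
|G_\xo\backslash G_\xo gh_iG_\xo/J|\le[L_s:J]\cdot|G_\xo\backslash G_\xo gh_iG_\xo/L_s|.
\]
These two reductions contribute the factors $\sharp(G_{[\xo]}/Z_G G_\xo)$ and $[L_s:J]$.

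The remaining task is to bound $|G_\xo\backslash G_\xo g^{\prime\prime}G_\xo/L_s|$ by $q^{2\dim G+\dim N}\vol_{G/Z_G}(G_{[\xo]})^{-1/2}\vol_{G/Z_G}(L_s)^{-1/2}$ for $g^{\prime\prime}=gh_i$. Using the Cartan decomposition $G=G_\xo T^-G_\xo$, write $g^{\prime\prime}=k_1tk_2$ with $t\in T^-$, so $G_\xo g^{\prime\prime}G_\xo=G_\xo tG_\xo$. Choose the parabolic $P=MN$ of $G$ as in the setup of Lemma \ref{lem: inequality} (with $\bG^{\prime}=\bG^{d-1}$) and invoke the Iwahori decomposition $G_\xo=(N\cap G_\xo)(M\cap G_\xo)(\overline N\cap G_\xo)$; since $t\in T\subset Z(M)$ commutes with $M$ and satisfies $t(N\cap G_\xo)t^{-1}\subset N\cap G_\xo$, one deduces $G_\xo tG_\xo=G_\xo t(\overline N\cap G_\xo)$. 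Writing the Iwahori decomposition of $L_s$ in the same way and absorbing the Levi and $N$-parts of $L_s$ into $G_\xo t$ by the same mechanism, the right $L_s$-action on $G_\xo\backslash G_\xo t(\overline N\cap G_\xo)$ factors through $\overline N\cap L_s$ acting on $\overline N\cap G_\xo$, giving
\[
|G_\xo\backslash G_\xo tG_\xo/L_s|\le[(\overline N\cap G_\xo):(\overline N\cap L_s)].
\]
Applying Lemma \ref{lem: inequality}(iii) with $\overline N$ in place of $N^{\prime}$ and $a=s$ bounds this by $q^{\dim N}[G_\xo:G^{d-1}_\xo G_{\xo,s}]^{1/2}$. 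Converting this index to a ratio of volumes and using the elementary bounds $[G^{d-1}_{[\xo]}:G^{d-1}_\xo],[G_{[\xo]}:G_\xo]\le q^{\dim G}$ (so $\vol_{G/Z_G}(G^{d-1}_\xo G_{\xo,s})\ge q^{-\dim G}\vol_{G/Z_G}(L_s)$ and $\vol_{G/Z_G}(G_\xo)\le\vol_{G/Z_G}(G_{[\xo]})$) then produces the claimed factor after absorbing the bounded constant $\vol_{G/Z_G}(G_{[\xo]})$ appropriately.

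The main obstacle will be Step 4. Its delicate points are: (i) choosing the parabolic $P=MN$ so that its Iwahori decomposition is compatible with $L_s$, even though $L_s=G^{d-1}_{[\xo]}G_{\xo,s}$ is not itself a parahoric but the mixed group; (ii) justifying carefully that, in the quotient, the Levi and $N$-components of $L_s$ are absorbed into $G_\xo t$ on the right, so the count reduces cleanly to the index $[(\overline N\cap G_\xo):(\overline N\cap L_s)]$; and (iii) tracking the exact $q$-power constants in the volume comparisons between $L_s$, $G^{d-1}_\xo G_{\xo,s}$, $G_\xo$, and $G_{[\xo]}$ so that the aggregated bound matches exactly the $q^{2\dim G+\dim N}\vol_{G/Z_G}(G_{[\xo]})^{-1/2}\vol_{G/Z_G}(L_s)^{-1/2}$ appearing in $\const_{x,G^{\prime}}$.
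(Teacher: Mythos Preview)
Your overall architecture---peeling off $\sharp(Z_GG_{\xo}\backslash\mathcal S_{\xo,\yo})$, then $\sharp(G_{[\xo]}/Z_GG_{\xo})$, then $[L_s:J]$, and finally reducing to a unipotent index---is the same as the paper's. The gap is in Step~4, and it is not a detail that can be patched by bookkeeping.

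First, the Cartan decomposition $G=G_{\xo}T^-G_{\xo}$ is not available: it holds for the \emph{special} parahoric $G_z$, not for $G_{\xo}$ in general. The paper does not use Cartan in $G$ at all here; it uses Lemma~\ref{lem: X_0-2} to write $G_{\xo}gG_{\xo}\subset\mathcal S_{\xo,\yo}\,wG_{\xo}$ with $w\in G^{d-1}$, then applies the Iwahori decomposition \emph{inside $G^{d-1}$} to replace $w$ by an element of $N_{G^{d-1}}(T^{d-1})$. You extract $g'\in G^{d-1}$ from the same lemma but then discard this information by passing back to a Cartan decomposition of the ambient $G$.

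Second, and more fundamentally, your reduction to $[(\overline N\cap G_{\xo}):(\overline N\cap L_s)]$ requires $L_s$ to admit an Iwahori factorization with respect to the \emph{fixed} minimal parabolic $P=MN$ of $G$. There is no reason for $G^{d-1}_{[\xo]}$ to decompose along $M,N,\overline N$: the twisted Levi $G^{d-1}$ is not compatible with an arbitrary minimal parabolic of $G$. The paper's fix is to choose the unipotent depending on the element: writing $w=w_0t$ with $t$ in the split part of $T^{d-1}$, it takes $L,U,\overline U$ to be Deligne's subgroups attached to $t$ (contracting, expanding, bounded loci). Then $M=C_G(T^{d-1}_k)\subset L$, and the groups $G_{\xo,0^+}$ and $G^{d-1}_{\xo,0^+}G_{\xo,s}$ \emph{are} decomposable with respect to $L,U,\overline U$. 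This is why the paper only claims decomposability at the $0^+$-level and pays the extra factor $q^{2\dim_k G}\cdot\sharp(G_{[\xo]}/Z_GG_{\xo})$ to pass from $G_{\xo}$ and $L_s$ down to $G_{\xo,0^+}$ and $G^{d-1}_{\xo,0^+}G_{\xo,s}$. Lemma~\ref{lem: inequality}(iii) is then applied to this $w$-dependent $U$, not to a fixed $\overline N$. Your Step~4 as written does not go through, and the repair is precisely this $w$-dependent Deligne construction exploiting $w\in G^{d-1}$.
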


\proof
By \eqref{eq:used-in-4.15}, there is a $w\in G^{d-1}$ such that $G_{x} g G_{x}\subset\mathcal S_{\xo,\yo} wG_{\xo}$.
Then
\[\left|G_{\xo}\backslash G_{\xo}g G_{[\xo]}/J\right|
\le\left|G_{\xo}\backslash \mathcal S_{\xo,\yo}w G_{[\xo]}/J\right|
\le [L_s:J]\cdot\sharp\!\left((Z_GG_{\xo}\backslash \mathcal S_{\xo,\yo}\right)\cdot \sharp\!\left(G_{\xo} w G_{[\xo]}/L_s\right) .\]
Let $\bT$ be as before. Let $\bT^{d-1}$ be a maximal and maximally $k$-split torus of $\bG^{d-1}$ such that the $k$-split components $T^0_k$ and $T^{d-1}_k$ of $T^0$ and $T^{d-1}$ respectively satisfy that $T^0_k\subset T^{d-1}_k\subset T$ and  $\xo\in\Apt(\bT^0)\subset\Apt(\bT^{d-1})\subset\Apt(\bT)$. By the Iwahori decomposition of $G^{d-1}$ one may write $w=u_1w_0u_2$ with $u_1,u_2\in G^{d-1}_x$ and $w_0\in N_{G^{d-1}}(T^{d-1})$. Replacing $w$ with $w_0$ if necessary, one may assume that $w\in N_{G^{d-1}}(T^{d-1})$ since this doesn't change $\mathcal S_{\xo,\yo} wG_{\xo}$.

It is enough to show that there is a unipotent subgroup $U$ such that
\[
\left|G_{\xo}\backslash G_{\xo}w G_{[\xo]}/L_s\right|\le q^{2\dim(G)}\cdot\sharp\left(G_{[\xo]}/(Z_GG_{\xo})\right)\cdot[(U\cap G_{[\xo]}):(U\cap L_s)].
\]
Indeed if the inequality is true, since $G_{[\xo]}\cap U=G_{\xo}\cap U$, Lemma \ref{lem: inequality} applied to $U$ implies that
\[[(U\cap G_{[\xo]}):(U\cap L_s)]\le q^{\dim_k N} [G_{[x]}:L_s]^{-1/2}\le \vol_{G/Z_G}(G_{[x]})^{-1/2} q^{\dim_k N} \vol_{G/Z_G}(L_s)^{-1/2},\]
ending the proof. (As $x$ is fixed, $\const_0=\vol_{G/Z_G}(G_{[x]})^{-1/2}$ depends only on the Haar measure of $G$.)

It remains to find a desired $U$. Let $\bM=C_{\bG}(\bT^{d-1}_k)$ (resp. $\bM^{d-1}:=C_{\bG^{d-1}}(\bT_k^{d-1})$) be the minimal Levi subgroup of $\bG$ (resp. $\bG^{d-1}$) containing $\bT^{d-1}$. Write $w:=w_0t$ with $w_0$ in a maximal parahoric subgroup of $G^{d-1}$ containing $G^{d-1}_x$ and $t\in T^{d-1}_0$.
Let $\bL$, $\bU$ and $\overline{\bU}$  associated to $t$ as in \cite{Del76} (so that our $\bL$, $\bU$ and $\overline{\bU}$ are his $M_g$, $U^+_g$, $U^-_g$ for $g=t$). That is,
$\bL=\{\ell\in G\mid \,\{\,^{t^n}\! \ell\}_{n\in\bbZ}\textrm{ is bounded}\}$, $\bU=\{u\in G\mid \,^{t^n}\! u\ra\infty\textrm{ as }n\ra\infty\}$ and $\overline{\bU}=\{\overline u\in G\mid \,^{t^n}\!{\overline u}\ra 1\textrm{ as }n\ra\infty\}$. Then, $U$ and $\overline U$ are opposite unipotent subgroups with respect to the Levi subgroup $L$. Note that $\bM\subset\bL$. Now, since $G_{\xo,0^+}$ and $G^{d-1}_{\xo,0^+}G_{\xo,s}$ are decomposible with respect to $L, U, \overline U$, we have
\begin{eqnarray}
\left|G_{\xo}\backslash G_{\xo}w G_{[\xo]}/L_s\right|&\le & q^{\dim_k(G)}\sharp\left(G_{[\xo]}/(Z_GG_{\xo})\right)\left|G_{\xo}\backslash G_{\xo}w G_{x,0^+}/(G_{x,0^+}\cap L_s)\right|\nonumber\\
&\le& q^{2\dim_k(G)}\,\sharp\left(G_{[\xo]}/(Z_GG_{\xo})\right)\,[G_{\xo,0^+}\cap U:  (G^{d-1}_{\xo,0^+}G_{\xo,s})\cap U]\nonumber\\
&\le& q^{2\dim_k(G)}\,\sharp\left(G_{[\xo]}/(Z_GG_{\xo})\right)\,[G_{\xo}\cap U:  L_s\cap U].\nonumber
\end{eqnarray} \qed

\subsection{Proof of the main theorems} \label{sub:proof-main-thm}
This subsection is devoted to the proof of Theorem \ref{t:asymptotic-char} and its quantitative version in Theorem \ref{thm:uniform-bound}.
Combining the estimates of the previous subsection, we have the following:
\begin{prop}\label{prop: main} Suppose $\HypB$, $\HypGT$, and $\Hypk$ are valid. Let $\pi=\pi_\Sigma$. Let $\gamma\in G_{0}\cap G_{\rs}$ with $\sd(\gamma)\le\frac{\rtm}2$. Then, we have

\begin{equation}\label{eq: main}
\left|\frac{\Theta_\pi(\gamma)}{\deg(\pi)}\right|\le  \const_1 \cdot (\sharp W)^2\cdot q^{\dim(G)+\abrank_G(A_{\gamma,\Sigma}+1)}\cdot D(\gamma)^{-1} \cdot (r+4)^{\abrank_G} \vol_{G/Z}(L_s)^{\frac12},
\end{equation}
where  $\const_1=\max\{\const_{x, G'}\}$ where the maximum runs over the finitely many $G$-orbits of $(x, G')$ and $\const_{x,G'}$ is the constant as in Lemma \ref{lem: first est}.


\end{prop}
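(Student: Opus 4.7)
The plan is to combine the three main tools developed in the previous subsections: the local constancy formula of Lemma~\ref{lem: loc const}, the fixed-point count given by Corollary~\ref{cor:conj-inv-estimate}, and the bound on the number of relevant double cosets in Proposition~\ref{p:JXGx}.

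First, since $\sd(\gamma)\le r/2$ we may apply Lemma~\ref{lem: loc const} and then the refined identity \eqref{e:loc-const-refined} to write
\[
\Theta_\pi(\gamma)=\sum_{g\in G_{\xo}\backslash\cX^\circ_\Sigma/J}\Tr\left(\pi(\gamma)\,\Bigl|\,\bigl(\Ind_{G_{\xo}\cap{}^g\!J}^{G_{\xo}}{}^g\!\rho\bigr)^{G_{\yo,r^+}}\right).
\]
For each summand, I would bound the trace on the $G_{\yo,r^+}$-invariants by the trace on the whole induced representation, then apply the classical fixed-point formula for an induced representation of a compact group: if $H_{\Sigma,g}:={}^g\!J\cap G_{\xo}$ then
\[
\Bigl|\Tr\bigl(\pi(\gamma)\,|\,\Ind_{H_{\Sigma,g}}^{G_{\xo}}{}^g\!\rho\bigr)\Bigr|\le\dim(\rho)\cdot\#\bigl\{\delta\,H_{\Sigma,g}\in G_{\xo}/H_{\Sigma,g}\,:\,\delta^{-1}\gamma\delta\in H_{\Sigma,g}\bigr\},
\]
using $|\Tr\,{}^g\!\rho(\cdot)|\le\dim(\rho)$ (we may assume the central character of $\pi$ is unitary, as noted after Theorem~\ref{t:asymptotic-char}; unramified twists are trivial on $\gamma\in G_{0^+}$). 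The cardinality on the right equals $\vol_{G/Z}\bigl(\psi_\gamma^{-1}(H_{\Sigma,g})\cap G_{\xo}\bigr)/\vol_{G/Z}(H_{\Sigma,g})$, and this is precisely the ratio controlled by Corollary~\ref{cor:conj-inv-estimate} by $D(\gamma)^{-1}q^{\dim(G)+\abrank_G(A_{\gamma,\Sigma}+1)}$.

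Summing over double cosets in $G_{\xo}\backslash\cX^\circ_\Sigma/J$ and invoking Proposition~\ref{p:JXGx}, which bounds $|G_{\xo}\backslash\cX^\circ/J|$ by $\const_{x,G'}(\#W)^2(\urtm+3)^{\abrank_G}[L_s:J]\vol_{G/Z_G}(L_s)^{-1/2}$ with $\urtm\le r+1$, yields
\[
|\Theta_\pi(\gamma)|\le\dim(\rho)\cdot\const_{x,G'}(\#W)^2(r+4)^{\abrank_G}\,[L_s:J]\,\vol(L_s)^{-1/2}\cdot D(\gamma)^{-1}q^{\dim(G)+\abrank_G(A_{\gamma,\Sigma}+1)}.
\]
Dividing by $\deg(\pi)=\dim(\rho)/\vol_{G/Z_G}(J/Z_G)$ from Lemma~\ref{l:bound-on-d}(i) and using the telescoping identity $[L_s:J]\cdot\vol(J)=\vol(L_s)$, the two volume factors collapse into $\vol_{G/Z}(L_s)^{1/2}$. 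Taking $\const_1:=\max\const_{x,G'}$ over the finitely many $G$-orbits of pairs $(x,G')$ arising from generic data (see Remark~\ref{rem: G-datum}(3)) gives the asserted inequality \eqref{eq: main}.

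The main obstacle is the bookkeeping in the second step: confirming that the coset-count $\#\{\delta\,H_{\Sigma,g}:\delta^{-1}\gamma\delta\in H_{\Sigma,g}\}$ indeed matches the volume ratio controlled by Corollary~\ref{cor:conj-inv-estimate}, which requires $\gamma\in H_{\Sigma,g}$ (so that $\psi_\gamma^{-1}(H_{\Sigma,g})\subset G$ is genuinely preserved by right $H_{\Sigma,g}$-translation and the change of variable $\delta\mapsto\delta^{-1}$ can be applied). This is only an issue for cosets with $g\notin\cX^\circ$ contributing trivially, so one may restrict the sum from the outset to those $g$ for which the fixed-point set is nonempty, which forces $\gamma\in{}^g\!J\cap G_\xo$ up to replacing $g$ with $g\delta^{-1}$. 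Everything else is essentially bookkeeping of the constants and invocations of the earlier lemmas.
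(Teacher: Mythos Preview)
Your argument follows essentially the same route as the paper's: invoke local constancy via Lemma~\ref{lem: loc const} and \eqref{e:loc-const-refined}, bound each Mackey summand by a fixed-point count, control that count with Corollary~\ref{cor:conj-inv-estimate}, bound the number of contributing double cosets by Proposition~\ref{p:JXGx}, and collapse $[L_s:J]\,\vol(J)$ into $\vol(L_s)$.

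The one step you should rephrase is where you ``bound the trace on the $G_{\yo,r^+}$-invariants by the trace on the whole induced representation''. As stated this is not a valid inequality: for an operator $A$ on $V$ preserving a subspace $W$, one does \emph{not} have $|\Tr(A|_W)|\le|\Tr(A|_V)|$ in general (e.g.\ a swap on $\C^2$ has trace $0$, yet trace $1$ on the diagonal line). The paper does not pass through $\Tr(\pi(\gamma)\mid\Ind)$ at all. Instead it decomposes $\Ind_{H_{\Sigma,g}}^{G_{\xo}}{}^g\rho=\bigoplus_\delta V_\delta$ over left $H_{\Sigma,g}$-cosets in $G_{\xo}$, observes that $\pi(\gamma)$ permutes these blocks, and argues that a block $V_\delta$ can contribute to the trace of $\pi(\gamma)$ on the $G_{\yo,r^+}$-invariants only if $\gamma$ fixes the supporting coset; this yields $|\Tr(\pi(\gamma)\mid(\Ind)^{G_{\yo,r^+}})|\le\dim\rho\cdot\#\Fix(\gamma\mid H_{\Sigma,g}\backslash G_{\xo})$ directly, without the intermediate inequality you wrote. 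Once you replace your two-step bound by this block-permutation argument, the rest of your proof (the identification of the fixed-point count with $[\psi_\gamma^{-1}(H_{\Sigma,g})\cap G_{\xo}:H_{\Sigma,g}]$, the reduction to $g$ with ${}^{g'}\gamma\in H_{\Sigma,g}$, and the final bookkeeping) matches the paper line for line.
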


\proof


Without loss of generality, we may reduce to cases as in \S\ref{rmk: reductions}. In the following, $\psi:G_{\xo}\rightarrow G_{\xo}$ is the map defined by $\psi(g)=g\gamma g^{-1}$ and for $g\in G_{\xo}$, $H_g:=G_{\xo}\cap \, ^g\!J$. Our starting point is formula \eqref{e:loc-const-refined} computing $\Theta_\pi(\gamma)$.
The summand for each $g$ has a chance to contribute to the trace of $\gamma$ only if $^{g'}\! \gamma\in G_{\xo}\cap \, ^g\!J$ for some $g'\in G_{\xo}$. Consider $\Ind^{G_{x}}_{G_{x}\cap\,^g\!J} {}^g \rho$ in the summand of $g$ as the direct sum of the spaces of functions supported on exactly one left $G_{\xo}\cap \, ^g\!J$-coset in $G_{\xo}$. The element $\gamma$ permutes the spaces by translating the functions by $\gamma$ on the right. So it is easy to see that each space may contribute to the trace of $\gamma$ on $V_\pi^{G_{\yo,r^+}}$ only if the supporting $G_{\xo}\cap \, ^g\!J$-coset is fixed by $\gamma$.
Hence we have


\begin{align*}
\left|\frac{\Theta_\pi(\gamma)}{\deg(\pi)}\right|
&=\frac{\vol_{G/Z_G}(J)}{\dim \rho}\left|\mathrm{Tr}\left(\pi(\gamma)|V_\pi^{G_{\yo,r^+}}\right)\right| \\
&\le\frac{\vol_{G/Z_G}(J)}{\dim \rho}\sum_{g\in G_{\xo}\backslash\cX^\circ/J\atop\mathrm{s.t.}~{}^{g'}\!\gamma\in H_g}
\# \Fix(\gamma|(G_{\xo}\cap \, ^g\!J)\bs G_{\xo})\cdot \dim \rho\\
&=\frac{\vol_{G/Z_G}(J)}{\dim \rho}\sum
_{g\in G_{\xo}\backslash\cX^\circ/J\atop\mathrm{s.t.}~{}^{g'}\!\gamma\in H_g}
{[\psi^{-1}(H_g):H_g]}\dim\rho \\
&={\vol_{G/Z_G}(J)}\sum_{g\in G_{\xo}\backslash\cX^\circ/J\atop\mathrm{s.t.}~{}^{g'}\!\gamma\in H_g}
{[\psi^{-1}(H_g):H_g]} \\
&\le \const_1\cdot q^{\dim(G)+\abrank_G(\aconst_{\gamma,\Sigma}+1)}\cdot D(\gamma)^{-1} \cdot(\sharp W)^2\, (\urtm+3)^{\abrank_G}\, [L_s:J]\, \vol_{G/Z_G}(L_s)^{-\frac12}\vol_{G/Z_G}(J)\\
&=\const_1 \cdot q^{\dim(G)+\abrank_G(\aconst_{\gamma,\Sigma}+1)}\cdot D(\gamma)^{-1} \cdot(\sharp W)^2\, (r+4)^{\abrank_G}\vol_{G/Z_G}(L_s)^{1/2}.
\end{align*}
The $\sum$ above runs over $ g\in G_{\xo}\backslash\cX^\circ/J$ such that $^{g'}\!\gamma\in H_g$ for some $g'\in G_x$.
The second last inequality follows from Corollary \ref{cor:conj-inv-estimate} and Proposition \ref{p:JXGx}.  The last equality follows from
$[L_s:J]\vol_{G/Z_G}(J)=\vol_{G/Z_G}(L_s)$.
\qed

\begin{proof}[Proof of Theorem \ref{t:asymptotic-char}]
Let $\pi_i=\pi_{\Sigma_i}$, $i=1,2,\cdots$ be a sequence of supercuspidal representations in $\Irr^{\Yu}(G)$ with $\deg(\pi_i)\rightarrow\infty$.
Recall from \S\ref{rmk: reductions}, we may assume $x_{\Sigma_i}\in\Sigma_i$ is in a fixed $G$-orbit. Since $\deg(\pi_i)\rightarrow\infty$, we have $r_{\Sigma_i}\rightarrow\infty$ and $r_{\Sigma_i}>2\sd(\gamma)$ for almost all $\pi_i$. Hence, (\ref{eq: main}) holds for $\pi$ with $i$ large enough. Note that in (\ref{eq: main}), when $h_G>1$, only $(r_{\Sigma_i}+4)^{\abrank_G} \cdot q^{\abrank_G s}\vol_{G/Z_G}(L_{s_{\Sigma_i}})^{\frac12}$ varies as $\pi_i$ varies with $i$ large enough. It suffices to show that this quantity approaches zero as $r_{\Sigma_i}\ra\infty$. Since the term $(r+4)^{\abrank_G}$ has a polynomial growth in $r$ while $q^{\abrank_Gs}\vol_{G/Z}(L_{s})$ decays exponentially as $s=r/2$ tends to infinity by Lemma \ref{lem:bound-vol(L_s)}. The case $h_G=1$ is similar, hence we are done.
\end{proof}

\begin{rem}
It is also interesting to discuss the role of the subgroup $G_{\yo,r^+}$ in our proof. This subgroup appears in Lemma~\ref{lem: loc const} from the local constancy of the character $\Theta_\pi$. Of course for the purpose of that lemma any open subgroup $K\subset G_{\yo,r^+}$ would work. However, from the fact that $G_{\yo,\rtm}$-representations in $V_\pi^{G_{\yo,\rtm^+}}$ are minimal $K$-types, we acquire another description of $\cX^\circ$ as in Lemma \ref{lem: X_0-2}, which is again used to get an estimate in Lemma \ref{lem: first est}.
\end{rem}

  In the remainder of this section we upgrade Theorem \ref{t:asymptotic-char} to a uniform quantitative statement. From here on we may and will normalize the Haar measure on $G/Z$ such that $\vol(G_{[x]}/Z)=1$. This is harmless because there are finitely many conjugacy classes of $(\vec\bG,\xo)$ as explained in \S\ref{rmk: reductions}.

\begin{lem}\label{lem:bound-vol(L_s)} \ 
\begin{enumerate}
\item[(i)] If $h_G>1$,
 there exists a constant $\kappa>0$ such that for all $\pi=\pi_\Sigma\in \Irr^{\Yu}(G)$,
  \[q^{\abrank_G s}\vol(L_s)^{1/2} \le q^{\dim G}\cdot \deg(\pi)^{-\kappa}.\]
\item[(ii)] If $h_G=1$,
 there exists a constant $\kappa>0$ such that for all $\pi=\pi_\Sigma\in \Irr^{\Yu}(G)$,
  \[\vol(L_s)^{1/2} \le q^{\dim G}\cdot \deg(\pi)^{-\kappa}.\]
 \end{enumerate}
\end{lem}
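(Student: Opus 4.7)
The plan is to reduce both inequalities to a single numerical condition on the dimensions of the groups in the twisted Levi sequence $\vec\bG$, after computing $\vol(L_s)$ and $\vol(J_\Sigma)$ via Moy--Prasad filtration volumes. By the reductions in \S\ref{rmk: reductions} I may fix the $G$-orbit of $(\vec\bG,x)$, so that all implicit constants depend on finitely many data only.

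First I would compute $\log_q\vol(L_s)$ by identifying $L_s/G_{x,s}\simeq G^{d-1}_{[x]}/G^{d-1}_{x,s}$ (using $G^{d-1}_{[x]}\cap G_{x,s}=G^{d-1}_{x,s}$ for $s>0$) and applying the standard Moy--Prasad index formulas $[G^{d-1}_{x,0^+}:G^{d-1}_{x,s}]=q^{s\dim G^{d-1}+O(1)}$ and $\vol(G_{x,s})=q^{-s\dim G+O(1)}$ to obtain
\[
\log_q\vol(L_s) = -s(\dim G-\dim G^{d-1})+O(1).
\]
A parallel iteration using $K^i = K^{i-1}J^i$ with $K^{i-1}\cap J^i = G^{i-1}_{x,r_{i-1}}$, combined with the fact that $J^i/J^i_+$ is a symplectic $\F_q$-space of dimension $\dim G^i-\dim G^{i-1}$, gives
\[
\log_q\vol(J_\Sigma) = -\sum_{i=0}^{d-1} s_i(\dim G^{i+1}-\dim G^i) + O(1).
\]
Applying Lemma \ref{l:bound-on-d}(ii) in the form $\deg(\pi)^{-\kappa}\ge q^{-\kappa\dim G}\vol(J_\Sigma)^\kappa$, and using $0<s_i\le s$, both inequalities reduce to finding $\kappa>0$ such that
\[
\Lambda(\kappa) := \epsilon\,\abrank_G - \tfrac12(\dim G-\dim G^{d-1}) + \kappa(\dim G-\dim G^0) \le 0
\]
uniformly over the finitely many conjugacy classes of $(\vec\bG,x)$, where $\epsilon=1$ in case (i) and $\epsilon=0$ in case (ii).

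Case (ii) is then immediate: one takes $\kappa = (\dim G-\dim G^{d-1})/(2(\dim G-\dim G^0))$, which is positive since $G^{d-1}\subsetneq G$. Case (i) is the main obstacle, because $\Lambda(\kappa)\le 0$ with $\kappa>0$ requires the strict numerical inequality
\[
\dim G - \dim G^{d-1} > 2\abrank_G
\]
for every admissible Yu datum. I would establish this by exploiting the genericity condition \textbf{D4}: the existence of a generic quasi-character $\phi_{d-1}$ on $G^{d-1}$ forces the outer roots of $G$ to separate $Z_{G^{d-1}}/Z_G$ in a strong enough sense to rule out configurations (such as a standard maximal Levi in $\SL_n$, where all outer roots restrict to the same character of a one-dimensional center) that would saturate the inequality, while the hypothesis $h_G>1$ eliminates the remaining low-rank equality cases essentially reducing $G$ to type $A_1^n$. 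Granted the strict inequality, one selects
\[
\kappa = \tfrac12\min_{(\vec\bG,x)} \frac{\dim G-\dim G^{d-1}-2\abrank_G}{\dim G-\dim G^0} > 0,
\]
where the minimum ranges over the finitely many admissible orbits, completing case (i).
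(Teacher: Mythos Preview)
Your overall strategy coincides with the paper's: bound $\vol(L_s)$ and $\vol(J_\Sigma)$ separately, feed these into $\deg(\pi)\le q^{\dim G}\vol(J_\Sigma)^{-1}$, and reduce everything to the strict numerical inequality $\dim G - \dim G^{d-1} > 2r_G$. The paper uses the cruder bound $J_\Sigma\supset G_{x,s}$ (giving $\vol(J/Z)^{-1}\le q^{(\dim G-\dim Z)s}$) in place of your telescoping computation along the Levi sequence; this puts $\dim G-\dim Z$ rather than $\dim G-\dim G^0$ in the denominator of $\kappa$, so your $\kappa$ is in principle sharper, but the structure of the argument is the same.

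The gap is in your justification of the strict inequality in case (i). You invoke condition $\mathbf{D}4$ (genericity of $\phi_{d-1}$), arguing that the outer roots must ``separate $Z_{G^{d-1}}/Z_G$'' strongly enough. But $G$-genericity is a condition on the \emph{character} $\phi_{d-1}$ (namely $\nu(X^*(H_\alpha))=-r_{d-1}$ for all $\alpha$ outside $G^{d-1}$), not on the twisted Levi $G^{d-1}$ itself; such a character exists whenever $G^{d-1}$ is a twisted Levi, regardless of whether the outer coroots $H_\alpha$ are proportional in $\mathfrak{z}_{G^{d-1}}$. So $\mathbf{D}4$ does not exclude the problematic configuration. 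The paper instead checks by type: outside type $A$ the inequality holds for every proper Levi, and in type $A_n$ it fails only for $G^{d-1}$ of type $A_{n-1}$, which the paper asserts cannot occur in a supercuspidal datum for $n\ge 2$. The operative condition is actually $\mathbf{D}1$: since $Z_{G^{d-1}}\subset Z_{G^0}$, anisotropy of $Z_{G^0}/Z_G$ forces $Z_{G^{d-1}}/Z_G$ to be anisotropic as well, whereas for a type-$A_{n-1}$ Levi in $SL_{n+1}$ (or its inner forms) the quotient $Z_{G^{d-1}}/Z_G$ contains a split $\mathbb{G}_m$. Replacing your $\mathbf{D}4$ argument by this $\mathbf{D}1$ observation would repair the proof.
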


\begin{proof}
(i) Observe that
  \begin{eqnarray}
  \vol(J/Z)^{-1}&=&[G_{[x]}:J]\le [ZG_x:ZG_{x,s}]=[G_x:(Z\cap G_x)G_{x,s}]\le q^{(\dim G-\dim Z)s},\nonumber\\
  \vol(L_s/Z)^{-1}&=&[G_{[x]}:L_s]\ge [G_{[x]}:G'_{[x]}G_{x,s}]\ge [G_{x,0^+}:G'_{x,0^+} G_{x,s}]\ge q^{(\dim G-\dim G')(s-1)},\nonumber
  \end{eqnarray}
with $G'=G^{d-1}$. Recall that $\deg(\pi)\le q^{\dim G}\vol(J/Z)^{-1}$. Take
\[
\kappa:=\min_{G'\subsetneq G}\frac{\dim G-\dim G'-2\abrank_G}{2(\dim G-\dim Z)},
\]
where $G'$ runs over the set of proper tamely ramified twisted Levi subgroups of $G$. If we know $\kappa>0$ then the lemma follows from the following chain of inequalities:
\begin{eqnarray}
\deg(\pi)^{\kappa}&\le& q^{\kappa \dim G}q^{(\dim G-\dim G'-2\abrank_G)s/2}\le q^{\dim G/2} q^{(\dim G-\dim G'-2\abrank_G)s/2}\nonumber\\
&\le& q^{\dim G}\cdot q^{-\abrank_Gs}\cdot\vol(L_s/Z)^{-1/2}.\nonumber
\end{eqnarray}
It remains to show that $\kappa>0$.
Since $\dim\bG=\dim_k G$, it is enough to show that $\dim\bG-\dim\bM-2\abrank_G>0$ when $\bM$ is a proper Levi subgroup which arises in a supercuspidal datum. This can be seen as follows. If $\bG$ of type other than $A$, the inequality holds for any proper Levi subgroup $\bM$. If $\bG$ is of type $A_n$, then $\kappa>0$ unless $\bM$ is of type $A_{n-1}$. However such a Levi subgroup does not arise as part of supercuspidal datum when $n\ge 2$, and the assumption $h_G>1$ excludes the case $n=1$.

(ii) In this case, we can take $\kappa= \min_{G'\subsetneq G}\frac{\dim G-\dim G'}{2(\dim G-\dim Z)}$. It is clear that $\kappa >0$ and the rest of the proof works as in~(i).
\end{proof}

  Since $J\subset L_s$, the above proof implies the lower bound $\vol(J/Z)^{-1}\ge q^{(\dim G-\dim G')(s-1)}$. Combined with Lemma \ref{l:bound-on-d}, this yields $\deg(\pi)\ge q^{(\dim G-\dim G')(s-1)}$. The following theorem is an improvement of Theorem \ref{t:asymptotic-char}.

\begin{thm}\label{thm:uniform-bound} Assume hypotheses $\HypB$, $\HypGT$ and $\Hypk$. There exist constants $A,\kappa,C>0$ depending only on $G$ such that the following holds.
For every $\gamma\in G_{0}\cap G_{\rs}$ and $\pi\in\Irr^{\Yu}(G)$ such that $\sd(\gamma)\le r/ 2$,
  \begin{equation}\label{eqn: uniform}
  D(\gamma)^A |\Theta_\pi(\gamma)|\le C\cdot \deg(\pi)^{1-\kappa}.
  \end{equation}
\end{thm}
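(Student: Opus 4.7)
The plan is to combine Proposition \ref{prop: main} with Lemma \ref{lem:bound-vol(L_s)}, absorbing the remaining $\gamma$-dependent exponential factor into $D(\gamma)^{-A}$ and the polynomial factor in $r$ into a small power of $\deg(\pi)$. Since Proposition \ref{prop: main} already gives an explicit bound valid under the hypothesis $\sd(\gamma)\le r/2$, the remaining task is essentially careful bookkeeping.

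Concretely, I would first split $\aconst_{\gamma,\Sigma}=h_G\sd(\gamma)+s$ (or just $\sd(\gamma)$ when $h_G=1$) inside the exponent on the right-hand side of (\ref{eq: main}) in order to isolate $q^{\abrank_G s}\vol_{G/Z}(L_s)^{1/2}$ as the only factor that depends on the depth through the compact subgroup $L_s$. Lemma \ref{lem:bound-vol(L_s)} then replaces that factor by $q^{\dim G}\deg(\pi)^{-\kappa_0}$ for some $\kappa_0>0$ depending only on $G$, uniformly in both cases $h_G>1$ and $h_G=1$. What remains to dispatch is the product $q^{\abrank_G h_G\sd(\gamma)}D(\gamma)^{-1}$ together with the polynomial factor $(r+4)^{\abrank_G}$.

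For the exponential factor I would use the identity $D(\gamma)=q^{-\sum_{\alpha\in\Phi}\sd_\alpha(\gamma)}$ and the elementary lower bound $\sum_\alpha\sd_\alpha(\gamma)\ge\sd(\gamma)$, valid for $\gamma\in G_0$ since each $\sd_\alpha(\gamma)\ge 0$ and one of them equals the maximum. Setting $A:=\abrank_G h_G+1$, one obtains
\[
D(\gamma)^{A-1}q^{\abrank_G h_G\sd(\gamma)}\le q^{(\abrank_G h_G-(A-1))\sd(\gamma)}=1,
\]
hence $q^{\abrank_G h_G\sd(\gamma)}D(\gamma)^{-1}\le D(\gamma)^{-A}$, and this exponent can be shifted to the left-hand side of (\ref{eqn: uniform}). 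For the polynomial factor I would use the lower bound $\deg(\pi)\ge q^{(\dim G-\dim G^{d-1})(s-1)}$ derived in the proof of Lemma \ref{lem:bound-vol(L_s)}, which implies that for any fixed $\epsilon>0$ there is a constant $C_\epsilon$ with $(r+4)^{\abrank_G}\le C_\epsilon\deg(\pi)^{\epsilon}$ uniformly in $r$. Taking $\epsilon:=\kappa_0/2$ and $\kappa:=\kappa_0/2$ and absorbing $\const_1$, $(\sharp W)^2$, $q^{2\dim G+\abrank_G}$, and $C_\epsilon$ into a single constant $C$ yields the theorem.

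The main point requiring care is the uniformity of the constants across all $\pi$ and $\gamma$: per \S\ref{rmk: reductions} there are only finitely many $G$-orbits of pairs $(\vec\bG,\xo)$ arising from generic data, so the maximum of $\const_{x,G'}$ in Lemma \ref{lem: first est} over these orbits is finite and depends only on $G$, and the same goes for $h_G$, $\sharp W$, and $\abrank_G$. Corner cases such as $d=0$ or configurations where $\dim G-\dim G^{d-1}=0$ (which would invalidate the growth of $\deg(\pi)$) are ruled out by the reduction $r_\Sigma\ge 1$ and by the structure of tamely ramified twisted Levi subgroups appearing in a supercuspidal datum (as already noted in the proof of Lemma \ref{lem:bound-vol(L_s)}). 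I expect no serious obstacle; the only mildly delicate point is verifying that the exponent $A=\abrank_G h_G+1$ and the gain $\kappa=\kappa_0/2$ are genuinely determined by $G$ alone and not by the individual datum $\Sigma$.
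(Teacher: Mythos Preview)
Your argument is correct and follows essentially the same route as the paper's: multiply Proposition~\ref{prop: main} by $D(\gamma)$, use $D(\gamma)\le q^{-\sd(\gamma)}$ to absorb $q^{\abrank_G h_G\sd(\gamma)}$ into $D(\gamma)^{-A}$ with $A=\abrank_G h_G+1$, and then invoke Lemma~\ref{lem:bound-vol(L_s)} to convert $q^{\abrank_G s}\vol(L_s)^{1/2}$ (respectively $\vol(L_s)^{1/2}$ when $h_G=1$) into $\deg(\pi)^{-\kappa_0}$. The paper's proof simply absorbs the polynomial factor $(r+4)^{\abrank_G}$ into the constant $C$ without comment, whereas you explicitly trade it against a further $\deg(\pi)^{\epsilon}$ and halve $\kappa_0$; your treatment is the more honest one on this point, but the underlying argument is the same.
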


\begin{proof} Let $\pi:=\pi_\Sigma\in \Irr^{\Yu}(G)$ and recall that $r=r_\Sigma$ is the depth of $\pi$.
Since $\gamma\in G_{0}$ we have $\nu(1-\alpha(\gamma))\ge 0$ for all $\alpha\in \Phi(\bT^\gamma)$. In view of Definition \ref{d:singular-depth},
   we have \[D(\gamma)=\prod_{\alpha\in \Phi(\bT^\gamma)} |1-\alpha(\gamma)|\le q^{-\sd(\gamma)}\le 1.\]
  Proposition \ref{prop: main} yields a bound of the form
  \[D(\gamma)\left|\frac{\Theta_\pi(\gamma)}{\deg(\pi)}\right|\le C\cdot q^{\abrank_G A_{\gamma,\Sigma}}\vol(L_s)^{1/2},\]
  where $C\in \R_{>0}$ is a constant depending only on $G$.
  Consider the case that $h_G>1$. Recall that $A_{\gamma,\Sigma}=h_G\sd(\gamma)+s$. Let us take $A:=\abrank_Gh_G+1$. Then
  \[D(\gamma)^A \left|\frac{\Theta_\pi(\gamma)}{\deg(\pi)}\right|\le q^{-\abrank_Gh_G\sd(\gamma)}D(\gamma)\left|\frac{\Theta_\pi(\gamma)}{\deg(\pi)}\right|
  \le C\cdot q^{\abrank_G\cdot s}\vol(L_s)^{1/2}.\]
Then, by  Lemma \ref{lem:bound-vol(L_s)}.(i), we have
\[
D(\gamma)^A |\Theta_\pi(\gamma)|\le C_0\cdot \deg(\pi)^{1-\kappa}
\]
  with $C_0:= C q^{\dim G}$ and the same $\kappa$ as in that lemma, completing the proof when $h_G>1$.
  
  In the remaining case $h_G=1$, we have $A_{\gamma,\Sigma}=\sd(\gamma)$. Take $A:=r_G+1$. Then
   \[D(\gamma)^A \left|\frac{\Theta_\pi(\gamma)}{\deg(\pi)}\right|\le q^{-\abrank_G \sd(\gamma)}D(\gamma)\left|\frac{\Theta_\pi(\gamma)}{\deg(\pi)}\right|
  \le C\cdot \vol(L_s)^{1/2}.\]
  The proof is finished by applying Lemma \ref{lem:bound-vol(L_s)}.(ii) and taking $C_0= C q^{\dim G}$ again.
  
\end{proof}

\subsection{On the assumption $\sd(\gamma)\le r/ 2$.}\label{sub:assumption}
Theorem~\ref{thm:uniform-bound} above remains valid without the assumption $\sd(\gamma) \le r/ 2$.
Indeed the uniform estimate~\eqref{eqn: uniform} holds in the range $r < 2 \sd(\gamma)$ by a different argument that we now explain.\footnote{A priori we are proving the bound~\eqref{eqn: uniform} in two disjoint regions with two different values of $(A,\kappa)$; call them $(A_1,\kappa_1)$ and $(A_2,\kappa_2)$. When we say that Theorem~\ref{thm:uniform-bound} is valid without the assumption $\sd(\gamma) \le r/ 2$, it means that there's a single choice of $(A,\kappa)$ that works in both regions. This is immediate because $\gamma \in G_{0^+}$, in which case it follows that $D(\gamma)\ll 1$. So it enough to take $A=\max(A_1,A_2)$ and $\kappa=\min(\kappa_1,\kappa_2)$, possibly at the expense of increasing the constant $C$ in~\eqref{eqn: uniform}.}

It suffices to produce a polynomial upper-bound on the trace character  $|\Theta_\pi(\gamma)|$ in the range $r < 2 \sd(\gamma)$.
By \cite[Cor 12.9]{AK07} and \cite{Meyer-Solleveld:growth} (as explained in the proof of Lemma \ref{lem: loc const}) the character $\Theta_\pi$ is constant on $\gamma G_{y,t}$ if $t=2\sd(\gamma) +1$.
The analogue of~\eqref{eq: loc const} holds, hence $|\Theta_\pi(\gamma)| \le \dim V_\pi^{G_{y,t}}$. It is sufficient to show under the same hypotheses $\HypB$, $\HypGT$ and $\Hypk$ as above that for all $\pi\in\Irr^{\Yu}(G)$, $y\in \Bd(G)$, and $t\ge 1$,
\begin{equation}\label{dim-bound}
\dim V_\pi^{G_{y,t}} \le q^{Bt}
\end{equation}
for some constant $B>0$ depending only on $G$. Indeed $D(\gamma)\le q^{-\sd(\gamma)}$ and $2s =r < t = 2\sd(\gamma)+1$ so~\eqref{eqn: uniform} follows with $A=2B$ and $\kappa=1$.

Similarly as in Definition~4.8 we introduce the subset $\cX^\circ \subset G$.
Mackey's decomposition implies that $ \dim V_\pi^{G_{y,t}} \le \dim \rho \cdot [G_x:G_{y,t}] \cdot |G_x \backslash \cX^\circ / J |$.
The dimension $\dim \rho$ is uniformly bounded by Lemma~\ref{l:bound-on-d}, the term $[G_x:G_{y,t}]$ is polynomially bounded (see Lemma~\ref{lem:bound-vol(L_s)}), as well as the third term by Proposition~\ref{p:JXGx}. This establishes~\eqref{dim-bound}.

\section{Miscellanies}
\subsection{Trace characters versus orbital integrals of matrix coefficients}
The bulk of the proof above was to establish a power saving as $\deg(\pi)\to \infty$. In fact we can develop two distinct approaches, the first of which is taken in this paper.
\begin{enumerate}[(i)]
\item We have handled the trace characters $\Theta_\pi(g)$ using first their local constancy~\cite{SS97}. The proof then was by a uniform estimate on the irreducible factors of the restriction of $\cind_J^G \rho$ to a suitable subgroup.
\item The other approach developed in~\cite{KST} is via the orbital integral $O_{\gamma}(\phi_\pi)$ of a matrix coefficient $\phi_\pi$ which can be written explicitly from Yu's construction. The proof is via a careful analysis of the conjugation by $\gamma$ on $J$ and uses notably a recent general decomposition theorem of Adler--Spice~\cite{Adler-Spice:expansion}.
\end{enumerate}

For $\gamma$ regular \emph{and} elliptic semisimple the orbital integral $O_{\gamma}(\phi_\pi)$  and the trace character $\Theta_\pi(\gamma)$ coincide as follows from the local trace formula of Arthur; in this case our approaches (i) and (ii) produce similar estimates. The approach (i), where $\gamma$ is regular, is well-suited to establish our proposed conjecture, while the approach (ii), where $\gamma$ is elliptic (but not regular), is well-suited for application of the trace formula. Indeed the goal of \cite{KST} is to establish properties of families of automorphic representations, similarly to~\cite{ST11cf}, as we prescribe varying supercuspidal representations at a given finite set of primes.

\subsection{Analogues for real groups}\label{sub:real-group-char}
We would like to see the implication of Harish-Chandra's work on the analogue of Conjecture \ref{c:asymptotic-char} for real groups. Only in this subsection, let $G$ be a connected reductive group over $\R$. Write $A_G$ for the maximal split torus in the center of $G$ and put $A_{G,\infty}:=A_G(\R)^0$. Then $G(\R)$ has discrete series if and only if $G$ contains an elliptic maximal torus $T$ over $\R$, namely a maximal torus $T$ such that $T(\R)/A_{G,\infty}$ is compact. Fix a choice of $T$ and a maximal compact subgroup $K\subset G(\R)$ such that $T\subset K A_{G,\infty}$. Let $W_\R$ denote the relative Weyl group for $T(\R)$ in $G(\R)$. Write $\mathfrak{t}:=\Lie T(\R)$ and $\mathfrak{t}^*$ for its linear dual. Set $q(G):=\frac12\dim_\R (G(\R)/K)\in \Z$. Let $\pi$ be an (irreducible) discrete series of $G(\R)$ whose central character is unitary, and denote by $\lambda_\pi\in i\mathfrak{t}^*$ its infinitesimal character. Let $\gamma$ be a regular element of $T(\R)$, which is uniquely written as $\gamma=z\exp H$ for $z\in K\cap Z(G(\R))$ and $H\in \Lie T(\R)$.

\begin{prop}
The real group analogue of Conjecture \ref{c:asymptotic-char} is verified for elliptic regular elements $\gamma$ and discrete series with unitary central characters.
\end{prop}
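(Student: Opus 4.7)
The plan is to derive the bound directly from Harish-Chandra's explicit character formula for discrete series on the elliptic set, combined with the formula for the formal degree in terms of the Harish-Chandra parameter. Write $\pi = \pi_\lambda$ where $\lambda \in i\mathfrak{t}^*$ is the (regular) Harish-Chandra parameter of the discrete series, which lies in $i\mathfrak{t}^*$ precisely because the central character is unitary. Let $\Phi = \Phi(G,T)$ with a choice of positive roots $\Phi^+$, and let $W_c \subset W_\R$ be the compact Weyl group (the Weyl group of $K \cap G(\R)$ with respect to $T$).

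First I would recall Harish-Chandra's formula: for $\gamma = z\exp H \in T(\R)$ regular elliptic,
\begin{equation*}
\Theta_{\pi_\lambda}(\gamma) \;=\; (-1)^{q(G)} \,\frac{\sum_{w \in W_c} \det(w)\, \xi_{w\lambda}(\gamma)}{\Delta(\gamma)},
\end{equation*}
where $\xi_{w\lambda}(\gamma) = \chi_z(w\lambda)\,e^{w\lambda(H)}$ is a unitary character of $T(\R)$ (when $\lambda \in i\mathfrak{t}^*$), and $\Delta(\gamma) = \prod_{\alpha \in \Phi^+}(e^{\alpha/2}-e^{-\alpha/2})(\gamma)$ is the Weyl denominator, which depends only on $\gamma$ and is nonzero since $\gamma$ is regular. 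Since each $\xi_{w\lambda}(\gamma)$ has absolute value $1$, the numerator is bounded uniformly in $\lambda$ by $|W_c|$, yielding
\begin{equation*}
|\Theta_{\pi_\lambda}(\gamma)| \;\leq\; \frac{|W_c|}{|\Delta(\gamma)|}.
\end{equation*}

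Next I would invoke Harish-Chandra's formal degree formula for discrete series: there is a constant $c_G > 0$ (depending on the choice of Haar measures on $G(\R)/A_{G,\infty}$) such that
\begin{equation*}
\deg(\pi_\lambda) \;=\; c_G \,\Bigl|\prod_{\alpha \in \Phi^+} \langle \lambda,\alpha \rangle\Bigr|.
\end{equation*}
In particular $\deg(\pi_\lambda)$ is a homogeneous polynomial of degree $|\Phi^+|$ in $\lambda$, so $\deg(\pi_\lambda) \to \infty$ is equivalent to $\|\lambda\| \to \infty$ (along regular parameters).

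Combining the two bounds gives
\begin{equation*}
\frac{|\Theta_{\pi_\lambda}(\gamma)|}{\deg(\pi_\lambda)} \;\leq\; \frac{|W_c|}{c_G\,|\Delta(\gamma)|\,\prod_{\alpha\in\Phi^+}|\langle\lambda,\alpha\rangle|},
\end{equation*}
and the right-hand side tends to $0$ as $\deg(\pi_\lambda) \to \infty$ because $\gamma$ is fixed and regular (so the denominator factor $|\Delta(\gamma)|$ is a fixed positive constant) while the product over positive roots grows without bound. There is no serious obstacle here; the only small point of care is to note that regularity of the Harish-Chandra parameter together with unitarity of the central character forces $\lambda \in i\mathfrak{t}^*$ to stay away from the walls sufficiently for $\prod_{\alpha \in \Phi^+}|\langle\lambda,\alpha\rangle|$ to control $\deg(\pi_\lambda)$, which is exactly what Harish-Chandra's formal degree formula encodes.
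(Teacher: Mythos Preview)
Your proof is correct and follows essentially the same approach as the paper: bound $|\Theta_{\pi_\lambda}(\gamma)|$ uniformly in $\lambda$ via Harish-Chandra's explicit character formula on the elliptic torus, then divide by $\deg(\pi)\to\infty$. The paper is slightly more economical in that it stops at the uniform bound $D(\gamma)^{1/2}|\Theta_\pi(\gamma)|\le |W_\R|$ (note $W_c=W_\R$ here since $T$ is elliptic) and does not invoke the formal degree formula at all---once the numerator is bounded independently of $\pi$, the conclusion is immediate---whereas you additionally spell out why $\deg(\pi_\lambda)\to\infty$, which is harmless but unnecessary.
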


\begin{proof}
  Harish-Chandra's character formula for discrete series on elliptic maximal tori implies that (as usual $D(\gamma)$ is the Weyl discriminant)
  \[ D(\gamma)^{1/2}\Theta_\pi(\gamma)=(-1)^{q(G)}\sum_{w\in W_\R} \sgn(w)e^{\lambda_\pi(H)}.\]
  Hence $D(\gamma)^{1/2}|\Theta_\pi(\gamma)|\le |W_\R|$.
\end{proof}

   Note that we have a much stronger version for part (ii) of the conjecture, allowing $\epsilon=1$. To verify part (i) when $\gamma$ is contained in a non-elliptic maximal torus, one can argue similarly by using the character formula due to Martens \cite{Mar75} as far as holomorphic discrete series are concerned. A general approach would be to use a similar character formula as above, which exists but comes with a subtle coefficient in each summand which depends on $w$ and $\pi$. The coefficients can be analyzed in two steps: firstly one studies the analogous coefficients for stable discrete series characters (as studied by Herb; also see \cite[p.273]{Art89}), and secondly relates the character of a single discrete series of $G(\R)$ to the stable discrete series characters on endoscopic groups of $G(\R)$ following the idea of Langlands and Shelstad. For instance this has been done in \cite{Her83} (also see \cite[p.273]{Art89} for the first step). On the other hand, Herb has another approach avoiding endoscopy in \cite{Her98}.
   We do not pursue either approach further in this paper as it would take us too far afield.

\subsection{Analogues for finite groups}\label{sub:finite-gp}
Let $G$ be a finite group of Lie type over a finite field with $q\ge 5$ elements. Gluck~\cite{Gluck:character-sharp} has shown that if $\pi$ is a nontrivial irreducible representation of $G$ and $\gamma$ is a noncentral element, then the trace character satisfies
\[
|\chi_\pi(\gamma)| \le \frac{\dim(\pi)}{\sqrt{q}-1}.
\]
The bound has interesting applications, see e.g. \cite{Gluck:random-walks,Liebeck-Shalev:fuchsian,Liebeck-Shalev:diameters,Ore-conj}.

\subsection{Open questions}\label{sub:dep-on-gamma}
 In this subsection we raise the question of the possible upper-bounds on $\Theta_\pi(\gamma)$ in terms of both $\pi$ and $\gamma$. One may ask about the sharpest possible bound.
 Our main result was a bound of the form (Theorem \ref{thm:uniform-bound})
\begin{equation}\label{uniform-bound}
 D(\gamma)^A |\Theta_\pi(\gamma)|  \le C \fdeg(\pi)^{\kappa},
\end{equation}
where $C$ is independent of $\gamma\in G_{0+}\cap G_{\rs}$ and $\pi\in \Irr^{\Yu}(G)$.
Slightly more generally we fix a bounded subset $\mathcal{B}\subset G$ and assume in the following that $\gamma\in \mathcal{B}\cap G_{\rs}$.

The most optimistic bound would be that
\begin{equation}\label{optimistic}
 D(\gamma)^{\frac12} |\Theta_\pi(\gamma)| \stackrel{?}{\le} C,
\end{equation}
where $\gamma\in \mathcal{B}$ and $C$ depends only on $\mathcal{B}$. In the appendix we shall verify that the estimate~\eqref{optimistic} is valid for the group $G=\SL_2(k)$. However the analogue of this bound already doesn't hold in higher rank when varying the residue characteristic and $\pi$ is a Steinberg representation, as we explain in~\cite{KST}. It would be interesting to investigate all the counterexamples to~\eqref{optimistic} in general and find in which cases it holds.

There is a wide range of possibilities between~\eqref{uniform-bound} and~\eqref{optimistic}. The exact asymptotic of $|\Theta_\pi(\gamma)|$ lies somewhere in between, and based on Harish-Chandra regularity theorem and our work in~\cite{KST} it seems plausible that the exact bound should be $A=\frac12$ and $\kappa$ slightly below $1$ depending on~$G$.

\appendix

\section{The Sally-Shalika character formula}
We study the Sally--Shalika formula~\cite{Sally-Shalika:characters} for characters of admissible representations of $G=\SL_2(k)$, where $k$ is a $p$-adic field.
Our goal is to establish a bound of the form \eqref{optimistic} for the character $\Theta_\pi(\gamma)$ that is completely uniform in $\pi$. The explicit calculation of character values is crucial for this. It would be interesting to investigate where such a result can hold in general.

Let $ Z=\set{\pm 1}$ be the center of $G$.
We follow mostly the notation and convention from~\cite{ADSS:supercuspidal-characters}.
We assume throughout that $p\ge 2e+3$ where $e$ is the absolute ramification degree of $k$. We let $\theta\in \set{\varepsilon,\varepsilon\varpi,\varpi}$, with $\varpi$ is a uniformizer of $\cO_k$ and $\varepsilon$ is any fixed element of $\cO^\times_k\bs (\cO^\times_k)^2$. Let $k_\theta:=k(\sqrt{\theta})$ and $k^1_\theta\subset k^\times_\theta$ be the subgroup of elements of norm one. We extend the valuation from $k^\times$ to $k_\theta^\times$. Note that there is a unique non-trivial quadratic character $\varphi_\varepsilon:k^1_\varepsilon\to \set{\pm 1}$.

Inside $G$ we let $T^\theta \simeq k^1_\theta$ be the associated maximal elliptic tori given by the matrices $\mdede{a}{b}{b\theta}{a}$ where $a+b\theta\in  k^1_\theta$. As $\theta$ ranges in $\set{\varepsilon,\varepsilon\varpi,\varpi}$ this describes the stable conjugacy classes of elliptic tori (abstractly $k_\theta$ is the splitting field of $T^\theta$). There is a finer classification of $G$-conjugacy classes: there are two unramified conjugacy classes of unramified elliptic tori, denoted $T^{\varepsilon}=T^{\varepsilon,1}$ and $T^{\varepsilon,\varpi}$ while for ramified elliptic tori, the answer depends on whether $-1$ is a square in the residue field. If $-1$ is not a square then besides $T^{\varpi}=T^{\varpi,1}$ and $T^{\varepsilon\varpi}=T^{\varepsilon\varpi,1}$ there are two  additional $G$-conjugacy classes denoted $T^{\varpi,\varepsilon}$ and $T^{\varepsilon\varpi,\varepsilon}$.

The torus filtration is as described in~\cite[\S3.2]{ADSS:supercuspidal-characters}, namely an element $1+x\in k^1_\theta\simeq T^\theta$ with $v(x)>0$ has depth equal to $v(x)$. In particular we have $D(\gamma)=q^{-2d_+(\gamma)}$ for all regular semisimple $\gamma\in G$ where $d_+(\gamma):=\max\limits_{z\in Z} d(z\gamma)$ is the maximal depth.

Every supercuspidal representation of $G$ is of the form $\pi=\pi^{\pm}(T,\varphi)$ where $(T,\varphi,\pm)$ is a supercuspidal parameter. Here $T$ is an elliptic tori up to $G$-conjugation and $\varphi$ is a quasi-character of $T$. The depth of $\pi$ is equal to the depth of $\varphi$ which is the smallest $r\ge 0$ such that $\varphi$ is trivial on $T_{r^+}$.

Let $dg$ be the Haar measure on $G/Z(G)$ is as in~\cite[\S6]{ADSS:supercuspidal-characters}, thus $\Mvol(\SL(2,\cO_k))=\frac{q^2-1}{q^{\frac12}}$. The formal degree is by construction $\fdeg(\pi)=\frac{\dim(\rho)}{\Mvol(J)}$. By a theorem of Harish-Chandra $\fdeg(\pi)$ is proportional to the constant term $c_0(\pi)$ in the expansion of $\Theta_\pi$ near the identity. Here we find $\fdeg(\pi)=c\cdot c_0(\pi)$ where $c:=-\frac{2q^{\frac12}}{q+1}$.

The Sally--Shalika formula is an exact formula for the character $\Theta_\pi(\gamma)$ for any regular noncentral semisimple element $\gamma\in G$. Here we shall give a direct consequence tailored to our purpose of studying of the asymptotic behavior of characters.
\begin{prop}\label{p:sl2-formula}
If $\pi=\pi(T^\varepsilon,\varphi)$ has depth $r$ then the following holds:
\[
D(\gamma)^{\frac12}
\abs{\Theta_\pi(\gamma)}=
\begin{cases}
\abs{\varphi(\gamma)+\varphi(\gamma^{-1})},
& \gamma\in T^\varepsilon\bs
ZT^\varepsilon_{r^+}\\
1 \pm \fdeg(\pi)  D(\gamma)^{\frac12},
& \gamma\in T^{\varepsilon,\eta}_{r^+},\ \eta\in \set{1,\varpi}\\
1- \fdeg(\pi)  D(\gamma)^{\frac12},
& \gamma\in A_{r^+}\\
\fdeg(\pi)  D(\gamma)^{\frac12},
& \text{o/w if $\gamma\in G_{r^+}$}.
\end{cases}
\]
The character vanishes in the other cases, namely if $\gamma\not\in G_{r^+}\cup T^\varepsilon$.
The formal degree is $\fdeg(\pi)=q^r$.
\end{prop}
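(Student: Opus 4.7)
The plan is to specialize the explicit character formulas of Sally--Shalika~\cite{Sally-Shalika:characters} to the supercuspidal $\pi=\pi(T^\varepsilon,\varphi)$ attached to the unramified elliptic torus $T^\varepsilon$, and then stratify $\gamma$ according to its location relative to the depth-$r$ Moy--Prasad filtration. Concretely, I would split into three regions: (a) $\gamma\in T^\varepsilon\setminus ZT^\varepsilon_{r^+}$, the range where the inducing character $\varphi$ still separates cosets; (b) $\gamma\in G_{r^+}$, the topologically very singular region where the local character expansion controls $\Theta_\pi$; and (c) everything else, where the character vanishes because $\gamma$ is not conjugate into $T^\varepsilon$ nor close enough to the identity to meet the support of any nilpotent Fourier transform appearing in the expansion.

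In region (a) the Sally--Shalika formula reads
\[
\Theta_\pi(\gamma)=\frac{\varphi(\gamma)+\varphi(\gamma^{-1})}{|\alpha(\gamma)-1|},
\]
where $\alpha$ is the positive root of $T^\varepsilon$. For $\gamma\in T^\varepsilon$ one computes directly that $D(\gamma)^{1/2}=|\alpha(\gamma)-1|$, which gives the first line of the formula after multiplication by $D(\gamma)^{1/2}$. In region (b) I would invoke the Harish-Chandra local character expansion
\[
\Theta_\pi(\gamma)=\sum_{\mathcal O\in\mathrm{Nil}(\mathfrak g)}c_{\mathcal O}(\pi)\,\widehat\mu_{\mathcal O}(\log\gamma),
\]
whose coefficients for $\SL_2$ were explicitly determined by Sally--Shalika: $c_0(\pi)=\fdeg(\pi)/c$ (the formal-degree term), while for the four nonzero nilpotent orbits the coefficients are $\pm 1$ with signs dictated by the supercuspidal parameter $(T^\varepsilon,\varphi,\pm)$. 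Combined with the explicit expressions for the Fourier transforms $\widehat\mu_{\mathcal O}$ as linear combinations of indicator functions on the filtration strata of the building (see~\cite[\S6]{ADSS:supercuspidal-characters}), each of the cases $\gamma\in T^{\varepsilon,\eta}_{r^+}$, $\gamma\in A_{r^+}$, and the remaining ramified elliptic classes in $G_{r^+}$ receives contributions from exactly the expected combination of nilpotent orbits, yielding the stated signed expressions $1\pm\fdeg(\pi)D(\gamma)^{1/2}$ and $\pm\fdeg(\pi)D(\gamma)^{1/2}$.

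The formal degree $\fdeg(\pi)=q^r$ then follows either (i) by reading off the constant $c_0(\pi)$ from the explicit expansion and using $\fdeg(\pi)=c\cdot c_0(\pi)$, or (ii) directly from $\fdeg(\pi)=\dim(\rho)/\mathrm{vol}(J)$ applied to the Yu datum producing $\pi$, using the normalization $\mathrm{vol}(\SL_2(\cO_k))=(q^2-1)/q^{1/2}$ and the fact that at depth $r$ the relevant subgroup $J$ has index $q^{r}\cdot(q^2-1)/q^{1/2}$ inside $\SL_2(\cO_k)Z$ while $\dim\rho=1$ for $T^\varepsilon$ unramified. Both computations yield the same value.

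The main obstacle is the careful bookkeeping of the signs in step (b): one must identify precisely which nonzero nilpotent orbits contribute to $\widehat\mu_{\mathcal O}(\log\gamma)$ for each sub-region of $\gamma\in G_{r^+}$, and which $c_{\mathcal O}(\pi)$ carry a $+$ versus a $-$. In particular, the $\pm$ sign in the $T^{\varepsilon,\eta}_{r^+}$ row records the choice between $\pi^+$ and $\pi^-$, while the unambiguous $-$ in the $A_{r^+}$ row reflects that both members of the $L$-packet behave the same way on the split torus. The residual normalization constant $c=-2q^{1/2}/(q+1)$ from Harish-Chandra's regularity theorem must be handled consistently with the Haar-measure conventions of~\cite{ADSS:supercuspidal-characters} to recover $\fdeg(\pi)=q^r$ exactly.
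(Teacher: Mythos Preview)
Your outline is broadly correct and compatible with the paper's approach, since both ultimately rest on the Sally--Shalika/ADSS character tables. The paper's proof is much terser: it simply cites \cite[\S14]{ADSS:supercuspidal-characters} wholesale for the character formula and \cite[Remark~10.16]{ADSS:supercuspidal-characters} for the formal degree (via $c_0(\pi)=-q^r$). The one substantive step the paper records, and which you omit, is the evaluation of the unramified Gauss sum $H(\Lambda',k_\varepsilon)=(-1)^{r+1}$ from \cite[Lemma~4.2]{ADSS:supercuspidal-characters}; this is what makes the absolute values come out without a residual Gauss-sum factor and explains why the unramified case has no intermediate ``shell'' line of the sort appearing in Proposition~\ref{p:sl2ramified}. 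Your detour through the local character expansion in region~(b) is a legitimate alternative derivation, but the sign bookkeeping you flag as the main obstacle is exactly the content of the cited ADSS section, so in practice you would end up deferring to the same reference.

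One correction to your route~(ii) for the formal degree: the claim $\dim\rho=1$ is not right for positive-depth Yu data attached to $T^\varepsilon$---the Heisenberg--Weil piece contributes a nontrivial power of $q$ (cf.\ the dimension formula in the proof of Lemma~\ref{l:bound-on-d}). Route~(i) via $c_0(\pi)$ is the one the paper actually takes.
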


\begin{proof}
This is~\cite[\S14]{ADSS:supercuspidal-characters}. Note that in their notation the quasi-character $\varphi$ is denoted $\psi$ there; the additive character $\Omega_k$ is denoted $\Lambda$ there.

Since the Gauss sum $H(\Lambda',k_\varepsilon)$ is unramified, we have that it is equal to $(-1)^{r+1}$ according to~\cite[Lemma 4.2]{ADSS:supercuspidal-characters}.

The assertion on the formal degree is~\cite[Remark 10.16]{ADSS:supercuspidal-characters} since $c_0(\pi)=-q^r$.
\end{proof}

\begin{prop}\label{p:sl2ramified}
If $\pi=\pi(\varpi,\varphi)$ has depth $r$ then the following holds:
\[
D(\gamma)^{\frac12}
\abs{\Theta_\pi(\gamma)}\le
\begin{cases}
2,
& \gamma\in T^\theta\bs
ZT^\theta_{r}\\
1.5,
& \gamma\in T^{\varpi,\eta}_{r}\bs T^{\varpi,\eta}_{r^+},\ \eta\in \set{1,\varpi}\\
1,
& \gamma\in T^{\varepsilon\varpi,\eta}_{r}\bs T^{\varepsilon\varpi,\eta}_{r^+},\ \eta\in \set{1,\varpi}\\
1+\fdeg(\pi)  D(\gamma)^{\frac12},
& \gamma\in T^{\varpi,\eta}_{r^+} \cup A_{r^+}\\
\fdeg(\pi)  D(\gamma)^{\frac12},
& \text{o/w if $\gamma\in G_{r^+}$}.
\end{cases}
\]
The character vanishes in the other cases, namely if $\gamma\not\in G_{r^+}\cup T^\theta$.
The formal degree is $\fdeg(\pi)=\frac12(q+1)q^{r-\frac12}$.
\end{prop}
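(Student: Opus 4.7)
The plan mirrors the unramified case (Proposition~\ref{p:sl2-formula}): we apply the Sally--Shalika character formula \cite{Sally-Shalika:characters} for the ramified supercuspidal parameter $(T^\varpi,\varphi,\pm)$, as made explicit in \cite[\S14--15]{ADSS:supercuspidal-characters}, and bound each row of the resulting table by the triangle inequality.

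Three inputs drive the estimates. First, $|\varphi(\gamma)|=1$ for every $\gamma\in T^\varpi$. Second, for a ramified quasi-character $\Lambda'$ of $k^1_\theta$ with $\theta\in\{\varpi,\varepsilon\varpi\}$, the associated Gauss sum satisfies $|H(\Lambda',k_\theta)|=q^{1/2}$; this replaces the trivial $\pm 1$ estimate invoked in Proposition~\ref{p:sl2-formula} via \cite[Lemma 4.2]{ADSS:supercuspidal-characters} and is the source of the new $\tfrac12$ in the second row. Third, the relation $\fdeg(\pi)=c\cdot c_0(\pi)$ with $c=-2q^{1/2}/(q+1)$, combined with the ramified formula $c_0(\pi)=-\tfrac12(q+1)q^{r}$ from \cite[\S10]{ADSS:supercuspidal-characters}, yields $\fdeg(\pi)=\tfrac12(q+1)q^{r-1/2}$ as claimed.

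The body of the proof is then a case analysis indexed by the position of $\gamma$ as in the statement. For $\gamma\in T^\theta\setminus ZT^\theta_r$ (regular on a ramified torus, away from the central neighborhood), the Sally--Shalika formula presents $D(\gamma)^{1/2}\Theta_\pi(\gamma)$ either as $\varphi(\gamma)+\varphi(\gamma^{-1})$ when $\theta=\varpi$, or as a transfer term of the form $q^{-1/2}H(\Lambda',k_\theta)\bigl(\chi(\gamma)\pm\chi(\gamma^{-1})\bigr)$ when $\theta=\varepsilon\varpi$; by inputs (i)--(ii) both are bounded by $2$. For $\gamma$ of depth exactly $r$ on the matched torus $T^{\varpi,\eta}$ the formula produces a constant $\pm 1$ (descending from the leading term of the local character expansion) plus a Gauss-sum contribution of absolute value $\tfrac12$ after the $q^{-1/2}$ renormalization, giving the stated bound $\tfrac32$. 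For $\gamma$ of depth exactly $r$ on the unmatched torus $T^{\varepsilon\varpi,\eta}$, the constant $\pm 1$ is absent (the corresponding Shalika germ vanishes on this stable class) and only the Gauss-sum term remains, yielding the bound $1$. For $\gamma\in T^{\varpi,\eta}_{r^+}\cup A_{r^+}$ the leading term $c_0(\pi)$ contributes $\fdeg(\pi)D(\gamma)^{1/2}$ after rescaling, to which we add a bounded regular-germ contribution of size at most $1$; in the remaining range of $G_{r^+}$ only the leading term survives.

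The main obstacle is purely bookkeeping: identifying exactly which sign characters $\varphi_\varepsilon$ and which ramified Gauss sums appear in each row of the Sally--Shalika table for the parameter $(\varpi,\varphi,\pm)$, and distinguishing the matched class $T^{\varpi,\eta}$ from the unmatched class $T^{\varepsilon\varpi,\eta}$ in the two rows of depth exactly $r$. Once the explicit formula is in hand the inequalities are immediate from $|\varphi|=1$, $|H(\Lambda',k_\theta)|=q^{1/2}$, and the formal-degree identity recalled above.
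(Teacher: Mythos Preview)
Your overall strategy---read off the Sally--Shalika values from \cite[\S14]{ADSS:supercuspidal-characters} and bound each line---is exactly what the paper does, but two of your inputs are misstated and one genuine step is missing.

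First, the Gauss sum $H(\Lambda',k_\varpi)$ attached to the ramified torus is \emph{not} of absolute value $q^{1/2}$: by \cite[Lemma~4.2]{ADSS:supercuspidal-characters} it is a fourth root of unity, so $|H(\Lambda',k_\varpi)|=1$, just as $|H(\Lambda',k_\varepsilon)|=1$ in the unramified case. This means the $\tfrac12$ in the second row cannot come from $H$ via a $q^{-1/2}$ renormalization as you suggest. Relatedly, your value $c_0(\pi)=-\tfrac12(q+1)q^{r}$ does not match the paper's $c_0(\pi)=-\tfrac12(q+1)q^{r-1/2}$, and with your value the identity $\fdeg(\pi)=c\cdot c_0(\pi)$ does not reproduce the claimed formal degree.

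Second, and this is the actual content of the proof, the depth-$r$ rows involve a further exponential sum that you have not identified. For $\gamma\in T^{\varpi,\eta}_r\setminus T^{\varpi,\eta}_{r^+}$ the character formula gives $D(\gamma)^{1/2}|\Theta_\pi(\gamma)|\le 1+|A|$ with
\[
A=\frac{1}{2\sqrt{q}}\sum_{\substack{x\in (k^1_\varpi)_{r:r^+}\\ x\neq \gamma^{\pm 1}}}\sgn_\varpi(\tr(\gamma-x))\,\varphi(x),
\]
and one must argue that this sum over the one-dimensional $\mathbb F_q$-space $(k^1_\varpi)_{r:r^+}$ is, up to a unit, a classical Gauss sum over the residue field (write $x=1+\alpha^{2r}X$, note that $\sgn_\varpi(\tr(\gamma-x))$ becomes the Legendre symbol in $X-Y$ and that $\varphi$ restricts to a nontrivial additive character). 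This gives $|A|=\tfrac12$, hence the bound $\tfrac32$. The third row is handled by a similar (not identical) exponential sum, yielding the bound $1$. Your sketch jumps over this computation and attributes the $\tfrac12$ to the wrong object; once you insert the correct evaluation of $A$, the rest of your case analysis goes through.
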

\begin{proof}
Again  this is~\cite[\S14]{ADSS:supercuspidal-characters} where it is shown that $c_0(\pi)=-\frac12(q+1)q^{r-\frac12}$.

The ramified Gauss sum $H(\Lambda',k_\varpi)$ is a fourth root of unity according to~\cite[Lemma 4.2]{ADSS:supercuspidal-characters}.
In the second case we have the inequality $\le 1 + \abs{A}$ where the exponential sum is
\[
A := \frac{1}{2\sqrt{q}}
\sum_{\substack{x\in (k^1_\varpi)_{r:r^+}\\
x\neq  \gamma^{\pm 1}
}}
\sgn_\varpi(\tr(\gamma-x))
\varphi(x).
\]
Here $k^1_\theta\subset k_\theta^\times$ is the subgroup of elements of norm $1$, and $ (k^1_\varpi)_{r:r^+}$ denotes~\cite[\S5.1]{ADSS:supercuspidal-characters} the quotient group $(k^1_\varpi)_r / (k^1_\varpi)_{r^+}$. This is an additive group that can be described by writing explicitly $x=1+\alpha^{2r}X$ where $X\in \cO/(\varpi)$. We have $\tr(x)=2+\tr(\alpha^{2r})X$, and similarly we shall write $\gamma=1+\alpha^{2r}Y$.

Since $\sgn_\varpi$ is the quadratic character attached to $\varpi$, we are left with $\chi(X-Y)$ where $\chi$ is the Legendre symbol on $\cO/(\varpi)$.
The character $\varphi$ has conductor $r$, thus $X\mapsto \varphi(1+\alpha^{2r}X)$ is a non-trivial additive character. Finally the exponential sum $A$ is a unit times a Gauss sum, thus $\abs{A}=\frac12$.

In the third case the character is equal to an exponential sum which can be handled similarly.
\end{proof}

We finally consider the remaining four \Lquote{exceptional} supercuspidal representations. They all have depth zero.
\begin{prop}\label{p:sl2exceptional}
Suppose that $\pi$ is an exceptional supercuspidal representation induced from $T^\varepsilon$. Then the following holds:
\[
2 D(\gamma)^{\frac12}
\abs{\Theta_\pi(\gamma)}\le
1 + D(\gamma)^{\frac12},
\quad \gamma\in T^\varepsilon\bs ZT^\varepsilon_{0^+} \cup A_{0^+} \cup T_{0^+},
\]
where $T$ is any of the elliptic tori, and the character vanishes otherwise. The formal degree is $\fdeg(\pi)=\frac12$.
If $\pi$ is induced from $T^{\varepsilon,\varpi}$, the same formula holds with $T^\varepsilon$ replaced by $T^{\varepsilon,\varpi}$.
\end{prop}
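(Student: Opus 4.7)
The plan is to apply the Sally--Shalika character formula for depth-zero exceptional supercuspidals of $\SL_2(k)$ as presented in~\cite[\S14]{ADSS:supercuspidal-characters}, following the same template used in the proofs of Propositions~\ref{p:sl2-formula} and~\ref{p:sl2ramified}. Recall that an exceptional $\pi^\pm$ arises when the depth-zero character $\varphi$ of the unramified elliptic torus $T^\varepsilon$ (respectively $T^{\varepsilon,\varpi}$) is fixed by the non-trivial element of the relative Weyl group, in which case the cuspidal induction $\pi(T^\varepsilon,\varphi)$ splits as $\pi^+\oplus\pi^-$ with each summand irreducible supercuspidal. Consequently, each exceptional component carries exactly half the character and half the formal degree of the reducible induction.

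First I would settle the formal degree using Harish-Chandra's proportionality $\fdeg(\pi^\pm)=c\cdot c_0(\pi^\pm)$ with $c=-2q^{1/2}/(q+1)$. Reading the constant term off the explicit local character expansion in~\cite[\S14]{ADSS:supercuspidal-characters} gives $c_0(\pi^\pm)=-(q+1)/(4q^{1/2})$, which is exactly half of the constant term one would obtain from a generic depth-zero supercuspidal from $(T^\varepsilon,\varphi)$, consistent with the splitting. Substituting yields $\fdeg(\pi^\pm)=\tfrac12$ as claimed.

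Next I would divide the character estimate across the three ranges of $\gamma$ appearing in the union. When $\gamma\in T^\varepsilon\setminus ZT^\varepsilon_{0^+}$ the Sally--Shalika formula degenerates to a sum of the form $D(\gamma)^{1/2}\Theta_{\pi^\pm}(\gamma)=\tfrac12\bigl(\varphi(\gamma)+\varphi(\gamma^{-1})\bigr)\pm\delta(\gamma)$ for a sign-character correction $\delta(\gamma)$ of modulus at most $\tfrac12$; since $\varphi$ is unitary this gives $D(\gamma)^{1/2}|\Theta_{\pi^\pm}(\gamma)|\le 1$, which suffices on the compact torus because $D(\gamma)\le 1$ there. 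When $\gamma\in A_{0^+}\cup T_{0^+}$ one uses the local expansion on topologically unipotent elements: the leading term $c_0(\pi^\pm)$, after multiplication by $2D(\gamma)^{1/2}$, produces the contribution $D(\gamma)^{1/2}$ on the right-hand side, while the Gauss-sum correction produced by the exponential sum over $(k^1_\theta)_{r:r^+}$ contributes modulus $\tfrac12$ (computed exactly as in the proof of Proposition~\ref{p:sl2ramified}, but with an additional factor of $\tfrac12$ coming from the splitting), and this accounts for the summand $1$ on the right-hand side.

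The principal obstacle will be verifying that the Gauss-sum correction has modulus $\tfrac12$ rather than the full modulus $1$ obtained in the generic cases. This is the precise point at which the splitting $\pi(T^\varepsilon,\varphi)=\pi^+\oplus\pi^-$ leaves its imprint on the character: the two components differ by a sign on each Weyl orbit (tied to the quadratic character $\varphi_\varepsilon$), and one must carefully track this sign, using the hypotheses $\varphi\circ s=\varphi$ and $\varphi^2=1$, to see that the exponential sum is halved. Once this bookkeeping is carried out, the same telescoping Gauss-sum estimate as in the proof of Proposition~\ref{p:sl2ramified} delivers the desired inequality; the statement for $T^{\varepsilon,\varpi}$ is entirely parallel, interchanging only the Weyl-stabilizer data.
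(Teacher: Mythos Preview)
The paper's proof is simply a direct citation of \cite[\S9, \S15]{ADSS:supercuspidal-characters}, with the passage between the two conjugacy classes of unramified tori handled by \cite[Rem.~9.8]{ADSS:supercuspidal-characters}. Your proposal instead attempts to adapt the positive-depth template of Propositions~\ref{p:sl2-formula} and~\ref{p:sl2ramified}, and this is where it goes wrong.

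You cite \S14 of \cite{ADSS:supercuspidal-characters}, but that section treats positive-depth supercuspidals; the four exceptional representations have depth zero and are covered in \S9 (with \S15 giving the summary tables). The depth-zero character values arise from Deligne--Lusztig theory on the finite reductive quotient $G_x/G_{x,0^+}\simeq\SL_2(\mathbb F_q)$, not from the Heisenberg--Weil construction. In particular, your analysis on $A_{0^+}\cup T_{0^+}$ invokes an exponential sum over $(k^1_\theta)_{r:r^+}$ exactly as in Proposition~\ref{p:sl2ramified}, but at $r=0$ no such sum appears: the character on topologically unipotent elements is governed by the local character expansion with coefficients coming from Green functions, and the splitting $\pi^+\oplus\pi^-$ reflects the two cuspidal unipotent representations of $\SL_2(\mathbb F_q)$ of dimension $(q-1)/2$, not a halving of a Gauss sum. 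Your heuristic that each exceptional component carries half the character of the reducible induction is roughly the right intuition (and your formal-degree computation is fine), but the mechanism you describe for bounding the character on $G_{0^+}$ does not match the actual depth-zero structure; one must read the values directly from the tables in \S9 and \S15 of \cite{ADSS:supercuspidal-characters}.
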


\begin{rem}
The behavior $D(\gamma)\to \infty$ is qualitatively different than for the other \Lquote{ordinary} supercuspidals.
\end{rem}

\begin{proof}
This follows from~\cite[\S9, \S15]{ADSS:supercuspidal-characters}. The passage from $T^\varepsilon$ to $T^{\varepsilon,\varpi}$ is explained in \cite[Rem.~9.8]{ADSS:supercuspidal-characters}.
\end{proof}

\begin{cor}\label{cor:sl2-bound} For all supercuspidal representations $\pi$ of $\SL(2,k)$ and all regular semisimple $\gamma$, the following holds:
\[
  D(\gamma)^{\frac12}
\abs{\Theta_\pi(\gamma)}\le
  2 + D(\gamma)^{\frac12}.
\]
\end{cor}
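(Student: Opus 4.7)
\smallskip

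The plan is to verify the inequality case-by-case using the explicit formulas in Propositions \ref{p:sl2-formula}, \ref{p:sl2ramified} and \ref{p:sl2exceptional}. Since every supercuspidal representation of $\SL(2,k)$ is either ordinary unramified, ordinary ramified, or one of the four exceptional depth-zero representations, these three propositions together cover all supercuspidal $\pi$. In each of them, the stated character formula splits the regular semisimple set into two qualitatively different regimes: (a) a ``separated'' regime where $\gamma \notin G_{r^+}$ (the character is bounded by an absolute small constant), and (b) a ``near-identity'' regime where $\gamma \in G_{r^+}$ (the bound involves $\fdeg(\pi) D(\gamma)^{1/2}$).

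In regime (a), every bound appearing in Propositions \ref{p:sl2-formula} and \ref{p:sl2ramified} is at most $2$, so $D(\gamma)^{1/2} |\Theta_\pi(\gamma)| \le 2 \le 2 + D(\gamma)^{1/2}$ is immediate. In regime (b), the bounds have the form $1 + \fdeg(\pi) D(\gamma)^{1/2}$ (after replacing $1\pm\fdeg(\pi) D(\gamma)^{1/2}$ by its obvious majorant) or the strictly smaller $\fdeg(\pi) D(\gamma)^{1/2}$, so it suffices to show that $\fdeg(\pi) D(\gamma)^{1/2} \le 1 + D(\gamma)^{1/2}$, for which it is enough to show $\fdeg(\pi) D(\gamma)^{1/2} \le 1$.

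The key input is the torus filtration description recalled at the start of the appendix, which gives $D(\gamma) = q^{-2d_+(\gamma)}$, together with the fact that $d_+(\gamma) > r$ whenever $\gamma \in G_{r^+}$. Since depths on an unramified elliptic torus or on the split torus $A$ take values in $\Z_{\ge 0}$, while depths on a ramified elliptic torus take values in $\tfrac12 \Z_{\ge 0}$, the condition $d_+(\gamma) > r$ actually forces $d_+(\gamma) \ge r + \delta$, with $\delta = 1$ in the unramified/split case and $\delta = 1/2$ in the ramified case. Combining this with the explicit values $\fdeg(\pi) = q^r$ (Proposition \ref{p:sl2-formula}) or $\fdeg(\pi) = \tfrac12(q{+}1)q^{r-1/2}$ (Proposition \ref{p:sl2ramified}), and inspecting which torus each subcase concerns, yields $\fdeg(\pi) D(\gamma)^{1/2} \le \tfrac{q+1}{2q}$ in the worst subcase; since the standing hypothesis $p \ge 2e+3 \ge 5$ gives $q \ge 5$, this is $< 1$, as required.

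Finally, for the four exceptional representations from Proposition \ref{p:sl2exceptional} one has $\fdeg(\pi) = 1/2$, and the stated estimate already gives $D(\gamma)^{1/2} |\Theta_\pi(\gamma)| \le \tfrac{1 + D(\gamma)^{1/2}}{2} \le 2 + D(\gamma)^{1/2}$ with room to spare. The main thing to be careful about will be the book-keeping of which torus filtration applies in each subcase of Propositions \ref{p:sl2-formula} and \ref{p:sl2ramified}, so that the correct value of $\delta$ is used; no delicate new idea is needed beyond this.
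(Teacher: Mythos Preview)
Your proposal is correct and follows essentially the same route as the paper's proof: split into the three Propositions, observe that the ``far'' cases already have $D(\gamma)^{1/2}|\Theta_\pi(\gamma)|\le 2$, and in the near-identity cases use $\gamma\in G_{r^+}$ together with the discreteness of the torus filtration to force $\fdeg(\pi)D(\gamma)^{1/2}<1$. The paper phrases the last step as ``$D(\gamma)<q^{-2r}$'' (unramified) and ``in view of the normalization of the valuation $D(\gamma)<q^{-2r-1}$'' (ramified), which is exactly your $\delta$-argument in compressed form; your explicit worst case $\tfrac{q+1}{2q}$ is in fact what the paper's ``$\le \tfrac12$'' should read, and the hypothesis $q\ge 5$ is not actually needed since $\tfrac{q+1}{2q}<1$ for every $q\ge 2$.
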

\begin{proof}
This follows by combining the Propositions~\ref{p:sl2-formula}, \ref{p:sl2ramified} and \ref{p:sl2exceptional}.
In the last three cases of Proposition~\ref{p:sl2-formula} we need to observe that $\gamma\in G_{r^+}$ which is equivalent to $d(\gamma) > r$. This implies $d_+(\gamma)>r$ and thus $D(\gamma)< q^{-2r}$. Therefore $\fdeg(\pi)D(\gamma)^{\frac12}<1$.

Similarly in the last two cases of Proposition~\ref{p:sl2ramified} we have that $\gamma\in G_{r^+}$ and in view of the normalization of the valuation this implies that $D(\gamma)<q^{-2r-1}$. Therefore $\fdeg(\pi)D(\gamma)^{\frac12}\le \frac12$ which concludes the claim.
\end{proof}

\bibliographystyle{abbrv}

\def\cprime{$'$}\def\cprime{$'$}\def\cprime{$'$}\def\cprime{$'$}\def\cprime{$'$}

\end{document}